\title{Non-uniform hyperbolicity and existence of 
absolutely continuous invariant measures}
\author{Javier Solano} 
\address{IMPA, Estrada Dona Castorina 110, Rio de Janeiro, 22460-320,
Brazil}
\curraddr{Universidade Federal Fluminense, Instituto de Matem\'atica, 
Rua M\'ario Santos Braga, s/n, Valonguinho, Niter\'oi, 24020-140
Brazil}
\email{jsolano@impa.br}
\keywords{non-uniform hiperbolicity, absolutely continuous invariant
measures}
\thanks{The author was supported by CNPq and CAPES, Brazil}
\DeclareMathAlphabet{\mathpzc}{OT1}{pzc}{m}{it}
\def \al {\alpha }  \def \de {\delta }
\def \te {\theta} \def \fhi {\varphi} \def \ep {\epsilon} 
\def \ga {\gamma} \def \la {\lambda} \def \om {\omega}
\def \constantB {\mbox{\large$\varsigmaup$}} 
\def \secondelta {\sigma} 
\def \thirdelta {\varepsilon} 
\def \constantdomi {a} 
\def \bigchi {\mbox{\Large$\chi$}} 
\def \crit {\mathscr{C}} 
\def \cc {\mathcal{S}} 
\def \ra {\rightarrow }
 \def \slim {\limsup_{n\ra\infty}\, }
\def \ilim {\liminf_{n\ra\infty}\, }
\def \soma {\sum_{i=1}^{n}} 
\def \s {\textquoteright}
\newcommand{\leb}{\operatorname{Leb}}
\newcommand{\dist}{\operatorname{dist}}
\newcommand{\distv}{\operatorname{dist_{vert}}}
\newcommand{\graph}{\operatorname{graph}}
\newcommand{\toro}{\ensuremath{\mathbb{T}^1}}
\newcommand{\N}{\ensuremath{\mathbb{N}}}
\newcommand{\p}{^\prime}
\newcommand{\pcuad}{^{\prime\prime}}
\newcommand{\upla}[1]{(a_1,a_2,\ldots, a_#1)}
\newcommand{\sumaupla}[1]{a_1+a_2+\ldots+a_#1}
\theoremstyle{plain}
\newtheorem{MainThe}{Theorem}
\newtheorem{The}{Theorem}[section]
\newtheorem{Pro}{Proposition}[section]
\newtheorem{Le}{Lemma}[section]
\newtheorem{Cor}{Corollary}[section]
\newtheorem{Cla}{Claim}[section]
\newtheorem*{Conjecture}{Conjecture}
\theoremstyle{definition} 
\newtheorem{Def}{Definition}[section]
\newtheorem{Rem}{Remark}[section]
\newtheorem*{Proof}{Proof}
\newtheorem*{Acknowledge}{Acknowledgements}
\theoremstyle{remark}
\numberwithin{equation}{section}
\begin{document}
\maketitle

\begin{abstract}
We prove that for certain partially hyperbolic skew-products, 
non-uniform hyperbolicity along the leaves implies existence of a
finite number of ergodic absolutely continuous invariant probability
measures which describe the asymptotics of almost every point. The
main technical tool is an extension for sequences of maps of a result
of de Melo and van Strien relating hyperbolicity to recurrence
properties of orbits. As a consequence of our main result, we also
obtain a partial extension of Keller\textquoteright s theorem
guaranteeing the existence of absolutely continuous invariant measures
for non-uniformly hyperbolic one dimensional maps.
\end{abstract}  

\begin{section}{Introduction}
\markboth{}{INTRODUCTION}
In this paper we study the existence of absolutely continuous
invariant probability measures for non-uniformly expanding maps in
dimensions larger than 1.

It is a classical fact (see Ma\~n\'e, \cite{Ma}) 
that every uniformly expanding smooth map on a compact manifold admits
a unique ergodic absolutely continuous invariant measure, and this
measure describes the asymptotics of almost every point. Moreover, see
Bowen \cite{Bow}, uniformly  hyperbolic diffeomorphisms also have a
finite number of such \emph{physical measures}, describing the
asymptotics of almost every point. Actually, in this case, the
physical measures are absolutely continuous only along certain
directions, namely, the expanding ones.

The present work is motivated by the question of knowing, 
to what extent, weaker forms of hyperbolicity are still sufficient for
the existence of such measures. A precise statement in this direction
is:

\begin{Conjecture}[Viana, \cite{Via2}]
If a smooth map has only non-zero Lyapunov exponents at 
Lebesgue almost every point, then it admits some physical measure.
\end{Conjecture}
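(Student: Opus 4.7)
The plan is to attack this in the partially hyperbolic setting first, since the full conjecture in its stated generality is extremely delicate. Working with a skew-product $F(x,y) = (f(x), g(x,y))$ where $f$ is uniformly hyperbolic and each fiber map $g_x$ is non-uniformly expanding provides a natural playground in which the non-uniform hyperbolicity is confined to the fiber direction, and one has a dominated splitting to compensate. This is precisely the setting the paper seems to adopt, so my proposal mirrors that strategy.

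The approach would proceed in three main steps. First, I would invoke the extension of the de Melo--van Strien theorem for sequences of maps (the technical tool advertised in the abstract), which converts the assumption of non-zero Lyapunov exponents along the leaves into quantitative recurrence estimates: at Lebesgue almost every point and for a positive density of times $n$, the orbit has uniformly bounded backward contraction and distortion on a definite scale. These are \emph{hyperbolic times} in the sense of Alves--Bonatti--Viana. Second, using that hyperbolic times occur with positive density on a positive-measure set, I would build candidate a.c.i.m.s by Cesàro-averaging push-forwards of Lebesgue measure. Bounded distortion on hyperbolic pre-balls guarantees that the densities are uniformly controlled, so a weak-$*$ limit inherits absolute continuity from the uniform lower bound on a set of positive Lebesgue measure. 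Third, an ergodic decomposition together with the fact that each ergodic component must occupy a positive-measure basin yields only finitely many such physical measures, and a standard argument shows that these describe the asymptotics of almost every orbit.

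The main obstacle lies in the first step: upgrading \emph{asymptotic} non-uniform hyperbolicity to \emph{finite-time} expansion estimates without any a priori integrability or domination hypothesis. Pliss's lemma classically bridges this gap, but it requires the average expansion over long windows to be bounded away from zero on a set of positive measure. Under the bare hypothesis of non-zero exponents, this need not hold: the exponents could degenerate along subsequences, and the sets of points with good expansion at scale $n$ could shrink too fast in $n$. Some structural input -- domination of the stable/neutral direction, tameness of the critical set, or an integrability condition such as $\log\|Df^{-1}\|\in L^1$ -- appears necessary, which explains why the paper restricts to skew-products with a dominated center direction rather than addressing the conjecture head-on. In that restricted setting the sequence-of-maps version of de Melo--van Strien should supply exactly the recurrence control needed to execute the construction described above; producing the same control in the general setting of the conjecture is, to my mind, where the genuine difficulty resides, and is what keeps the full statement open.
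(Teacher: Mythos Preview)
The statement you were asked to prove is a \emph{conjecture}, not a theorem: the paper does not prove it and explicitly presents it as an open problem motivating the work. There is therefore no ``paper's own proof'' to compare against. You recognize this yourself --- your proposal is not a proof of the conjecture but an outline of the paper's strategy for the restricted skew-product setting (its Theorem~A), together with a correct diagnosis of why the general case remains out of reach.

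As an outline of the paper's approach to Theorem~A, your sketch is accurate in spirit and structure: the sequence-of-maps extension of de~Melo--van~Strien (Theorem~B) feeds into a Pliss-lemma argument to produce positive density of good times, bounded-distortion neighborhoods at those times drive the Ces\`aro-average construction, and a covering argument yields finiteness of ergodic components. One terminological nuance worth flagging: the paper's good times are not hyperbolic times in the literal Alves--Bonatti--Viana sense (uniform backward contraction at all intermediate scales) but what it calls \emph{hyperbolic-like times}, defined by $r_k(z)\ge\sigma$ --- that is, by the existence of a monotonicity interval whose $k$-th image has both one-sided components of length at least $\sigma$. Distortion control then comes from the Koebe principle (using the negative-Schwarzian hypothesis along fibers) rather than from the exponential contraction built into genuine hyperbolic times. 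This is precisely how the paper sidesteps the slow-recurrence condition that \cite{ABV} needs, and it is the substantive technical point your outline glosses over.
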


Two main results provide some evidence in favor of this conjecture. 
The older one is the remarkable theorem of Keller \cite{Keller}
stating that \emph{for maps of the interval with finitely many
critical points and non-positive Schwarzian derivative, existence of
absolutely continuous invariant probability is guaranteed by positive
Lyapunov exponents},  i.e.,
\begin{equation} \label{limsup>0}
\slim \frac{1}{n} \log |Df^n(x)|>0 
\end{equation}
\emph{on a positive Lebesgue measure set of points $x$} 
(see Subsection \ref{Preliminariesonedim} for definitions involved).
In fact, Keller proved the existence of a finite number of these
measures whose union of basins have full Lebesgue measure, in the case
that (\ref{limsup>0}) holds for Lebesgue almost every point.

Then, more recently, Alves, Bonatti and Viana \cite{ABV} proved that 
\emph{every non-uniformly expanding local diffeomorphism on any
compact manifold admits a finite number of ergodic absolutely
continuous invariant measures describing the asymptotics of almost
every point}. This notion of non-uniform expansion means that
\begin{equation} \label{liminf>0}
\ilim \frac{1}{n} \sum_{j=0}^{n-1} \log ||Df(f^j(x))^{-1}||^{-1} \geq
c>0 
\end{equation}
almost everywhere. Alves, Bonatti and Viana \cite{ABV} also give a
version of this result for maps with singularities, that is, which
fail to be a local diffeomorphism on some subset $\cc$ of the ambient
manifold. However, due to the presence of singularities they need an
additional hypothesis (of slow recurrence to the singular set $\cc$)
which is often difficult to verify. 
Given that Keller\textquoteright s theorem has no hypothesis about the
recurrence to the singular set (in his case $\cc=\{\text{critical
points}\}$), one may ask to what extent this condition is really
necessary. 

\markboth{}{}
This question was the starting point of the present work. 
Before giving our statements, let us mention a few related results.

One partial extension of both Keller \cite{Keller} and Alves, 
Bonatti and Viana \cite{ABV}, was obtained recently by Pinheiro
\cite{Pin}: he keeps the slow recurrence condition but is able to
weaken the hyperbolicity condition substantially, replacing $\liminf$
by $\limsup$ in (\ref{liminf>0}).

Another important result was due to Tsujii \cite{Ts}: $C^r$ generic 
partially hyperbolic endomorphisms on a compact surface admit finitely
many ergodic physical measures and the union of their basins is a
total Lebesgue measure set. When the center Lyapunov exponents are
positive, these measures are absolutely continuous.

Our own results holds for a whole, explicitly defined, 
family of transformations on surfaces. We prove existence and
finiteness of ergodic absolutely continuous invariant measures,
assuming only non-uniform expansion (slow recurrence is not
necessary).

Motivated by a family of maps introduced by Viana \cite{Via1} 
and studied by several other authors (see for example
\cite{Alv,AV,BST,Sc,AA}) we consider transformations of the form
$\fhi:\toro\times I_0\to \toro\times I_0$, $(\te,x)\mapsto (g(\te),
f(\te,x))$, where $g$ is a uniformly expanding circle map, each
$f(\te,\cdot)$ is a smooth interval map with non-positive Schwarzian
derivative, and $\fhi$ is partially hyperbolic with vertical central
direction:
\begin{equation*}
|\partial_\te g(\te)|>|\partial_x f(\te,x)| \quad\quad \text{ at all
points}.
\end{equation*}
We prove that if $\fhi$ is non-uniformly expanding then it admits 
some absolutely continuous invariant probability. Moreover, there
exist finitely many ergodic absolutely continuous invariant
probabilities whose union of basins is a full Lebesgue measure set.

The Viana maps \cite{Via1} correspond to the case when $g$ is affine, 
$g(\te)=d\te$ (mod 1) with $d>>1$, and $f$ has the form
$f(\te,x)=a_0+\al \sin (2\pi \te)- x^2$ (actually, \cite{Via1} deals
also with arbitrary small perturbations of such maps).
It was shown in \cite{Via1} that Viana maps are indeed non-uniformly 
expanding. Moreover, Alves \cite{Alv} proved that they have a unique
physical measure, which is absolutely continuous and ergodic. Their
methods hold even for a whole open set of maps not necessarily of
skew-product form. In fact, the argument of \cite{Alv} rely on a proof
of slow recurrence to the critical set which in that case is the
circle $\toro\times \{0\}$.

For the family of maps which we consider (see Theorem
\ref{PrincipalA}), we do not assume the slow recurrence condition,
fundamental in \cite{ABV}, \cite{Alv} and \cite{Pin}. On the other
hand, our method is completely different from the one used in the
mentioned works. We view $\fhi$ as a family of smooth maps of the
interval, namely, its restrictions to the vertical fibers
$\{\te\}\times I_0$. Thus, our main technical tool is an extension for
such families of maps of a result proved by de Melo and van Strien
\cite[Theorem V.3.2, page 371]{dMvS} for individual unimodal maps
saying, in a few words, that positive Lyapunov exponents manifest
themselves at a macroscopic level: intervals that are mapped
diffeomorphically onto large domains under iterates of the map. This,
in turn, allows us to make use of the hyperbolic times technique
similar to the one introduced by Alves, Bonatti and Viana \cite{ABV}.

Let us remark that in the setting of piecewise expanding maps 
in high dimensions, there are several works which deal with existence
of absolutely continuous invariant measures. Among them, let us
mention \cite{Ad, B, GB, Keller2, S}. In all the cases, additional
conditions on the expanding constants and (or) the boundary behavior
are required.
\begin{subsection}{Organization of the paper}

This paper is organized as follows. In Section \ref{Statements} we
give the precise statement of the main results. In section
\ref{Preliminaries} we introduce a few preliminary facts, which will
be useful in the sequel. In section \ref{Composition} we prove our
Theorem \ref{PrincipalB}, which is the extension of \cite[Theorem
V.3.2, page 371]{dMvS} mentioned before. The section
\ref{Consequences} contains the proof of one partial extension of
Keller\textquoteright s theorem. 

In section \ref{Hyperbolic-like} we prove another key result 
(Proposition \ref{difeoboundist}): for each interval which is mapped
diffeomorphically onto a large domain under an iterate of the
skew-product, there exists an open set containing this interval which
is sent diffeomorphically onto its image under the same iterate.
Moreover, this map has bounded distortion and the measure of the image
is bounded away from zero. We call these iterates
\emph{hyperbolic-like times}, because their behavior is similar to
hyperbolic times introduced in \cite{ABV}.

In section \ref{Absolutely} we combine the main lemma 
(Lemma \ref{A_nY_n}) used in the proof of Theorem \ref{PrincipalB},
with the Pliss Lemma to conclude  that the set of points with
infinitely many (and even positive density of) hyperbolic-like times
has positive Lebesgue measure. The construction of the absolutely
conti\-nuous invariant measure for the skew-product $\fhi$ follows
along well-known lines, as we explain in subsection
\ref{constructionofmeasure}. Finally,  on subsection
\ref{ergodicityofmeasure}, we prove the ergodicity of the measure and
the existence of finitely many SRB measures.
\end{subsection}

\begin{Acknowledge}
The results of this work are essential part of my doctoral thesis 
made at IMPA. I am thankful to Marcelo Viana for advice, constant
encouragement and valuable conversations. I am indebted to Vilton
Pinheiro and Vitor Ara\'ujo for suggestions and insightful
discussions. I also thank Sebastian van Strien for readily
clarifications of important points in his book with de Melo.  
\end{Acknowledge}

\end{section}

\begin{section}{Statement of the results}\label{Statements}
Let us present the precise statements of our results.

\begin{subsection}{Non-uniformly expanding skew-products}

Let $I_0$ be an interval and let $\toro$ be the circle. 
We consider $C^3$ partially hyperbolic skew-products defined on
$\toro\times I_0$, with critical points in the vertical
direction. The mappings we consider are precisely
\begin{equation*}\label{skewproduct}
\begin{array}{ccccc}
\fhi&:&\toro\times I_0&\ra&\toro\times I_0\\
&&(\te, x)&\ra&(g(\te), f(\te,x))
\end{array}
\end{equation*}
where $g$ is a uniformly expanding smooth map on $\toro$ and 
$f_{\te}:I_0\ra I_0\,,\:x\ra f(\te, x)$
is a smooth map, possibly with critical points, for every $\te\in
\toro$. We assume our map is partially hyperbolic, it means that 
satisfies (\ref{decomdomin1}) below (see Subsection
\ref{Preliminariestwodim}).

In the result of Alves, Bonatti and Viana (see \cite[Theorem C]{ABV}),
the set $\cc$ of singular points of $\fhi$ satisfies the
non-degenerate singular set conditions. These conditions allow the
co-existence of critical points and points with $|\det D\fhi| 
=\infty$. We will only admit critical points. 

We denote by $\crit$ the set of critical points of $\fhi$ 
and by $\crit_\te$ the  set of critical points contained in the
$\te$-vertical leaf. By $\distv$ we denote the distance induced by the
Riemmanian metric in the vertical leaf, i.e, if $z=(\te,x)$ for some
$x$, $\distv(z,\crit)=\dist(z,\crit_\te)$. 

Let $M=\toro\times I_0$ and $\crit\subset M$ a compact set. 
We consider a $C^3$ skew product map $\fhi:M\ra M$ which is a local
$C^3$ diffeomorphism in the whole manifold except in a critical set
$\crit$ such that: 
\begin{enumerate}
\item[($F_1$)]
$p=\sup \#\crit_\te<\infty\:$;
\end{enumerate}
there exists $B>0$ such that, for every $z\in M\setminus \crit$, 
$w\in M$ with $\dist(z,w)<\distv(z,\crit)/2$
\begin{enumerate}
\item[($F_2$)]
\quad $\displaystyle{\left|\log |\partial_x f(z)|-
\log|\partial_x f(w)|
\:\right|\leq \frac{B}{\distv(z,\crit)}\dist(z,w)}\:$;
\end{enumerate}
and for all $\te\in\toro$
\begin{enumerate}
\item[$(F_3)$] $Sf(\te,x)\leq 0$, for $x\in I_0$ where this 
quantity is defined.
\end{enumerate}
When $M=I_0$, if $f$ satisfies the one dimensional definition of 
non-flatness and $Sf\leq 0$ (see subsection \ref{Preliminariesonedim}
for definitions), then it automatically satisfies these conditions.
Now, we are in position to state our main result.
\begin{MainThe} \label{PrincipalA}
Assume that $\fhi:\toro\times I_0\rightarrow \toro\times I_0$ is 
a $C^3$ partially hyperbolic skew product satisfying $(F_1)$, $(F_2)$
and $(F_3)$. If $\fhi$ is non-uniformly expanding, i.e, for Lebesgue
almost every $z\in \toro\times I_0$,
\begin{equation}\label{nue}
{\liminf}_{n\rightarrow\infty}\frac{1}{n}\sum_{j=0}^{n-1}
\log \|D\fhi(\fhi^j(z))^{-1}\|^{-1}>0\: ,
\end{equation}
then $\fhi$ admits an  absolutely continuous invariant measure. 
Moreover, if the limit in (\ref{nue}) is bounded away from zero, then
there exist finitely many ergodic absolutely continuous invariant
measures and their basins cover $M$ up to a zero Lebesgue measure set.
\end{MainThe}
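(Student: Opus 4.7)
The plan is to reduce the two-dimensional problem to a fiber-wise analysis, exploiting the skew-product structure and the one-dimensional character of each $f_\te$. Since $\fhi$ is partially hyperbolic with dominated vertical direction, the weakest expansion at each point is along the fiber, so the non-uniform expansion hypothesis (\ref{nue}) transfers to the fibers: for Lebesgue-a.e.\ $(\te,x)$,
\[
\slim \frac{1}{n}\log|\partial_x f^n(\te,x)|>0,
\]
where $f^n(\te,x)$ denotes the second coordinate of $\fhi^n(\te,x)$, itself the composition of the one-dimensional maps $f_{g^j\te}$, $j=0,\dots,n-1$. Hence along a full-measure set of fibers we face a sequence of $C^3$ interval maps with $S\leq 0$ and positive Lyapunov exponent, exactly the setting in which Theorem \ref{PrincipalB} applies. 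Its conclusion supplies, at infinitely many iterates $n$, an interval $J_n\ni x$ in the vertical fiber such that $\fhi^n$ restricted to $\{\te\}\times J_n$ is a diffeomorphism onto its image, with image of macroscopic size $\delta_0>0$ independent of $n$.

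Next I apply Proposition \ref{difeoboundist} to fatten each $\{\te\}\times J_n$ into a two-dimensional neighborhood on which $\fhi^n$ is a diffeomorphism with bounded distortion and image of Lebesgue measure bounded below by some $\eta_0>0$. Such iterates are the \emph{hyperbolic-like times}; they play here the role of the hyperbolic times of Alves--Bonatti--Viana, but crucially are produced without any slow-recurrence hypothesis on $\crit$. Combining the existence of infinitely many such times with the quantitative form of (\ref{nue}) and the Pliss lemma upgrades ``infinitely many'' to ``positive density of'', so a positive Lebesgue measure set $H\subset M$ consists of points whose orbit visits hyperbolic-like times with positive asymptotic frequency.

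Once $H$ is in hand the construction of an acip is standard. Bounded distortion on the two-dimensional domains of $\fhi^n$, combined with the uniform lower bound $\eta_0$ on the image size, implies that the densities of $\fhi^n_*(\leb|_{H})$ at a hyperbolic-like time $n$ are uniformly bounded above on their supports. The Ces\`aro averages $\mu_k = k^{-1}\sum_{n=0}^{k-1}\fhi^n_*\leb$ therefore have a weak-$*$ accumulation point $\mu$ whose density with respect to Lebesgue is uniformly controlled, hence $\mu$ is an absolutely continuous invariant probability. When (\ref{nue}) is bounded away from zero, the density of hyperbolic-like times is uniform in the starting point, which forces the number of ergodic components of $\mu$ to be finite and their basins to cover $M$ up to zero Lebesgue measure, by the standard argument closing the ergodic decomposition through balls of uniform size on which $\fhi^n_*\leb$ is comparable to Lebesgue.

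The main obstacle is Proposition \ref{difeoboundist} itself: promoting a one-dimensional ``macroscopic interval'' furnished by Theorem \ref{PrincipalB} into a genuine two-dimensional rectangle on which $\fhi^n$ has uniformly bounded distortion. This step is where the domination $|\partial_\te g|>|\partial_x f|$, the non-degeneracy condition $(F_2)$, and the Schwarzian bound $(F_3)$ must all work together, and it is precisely what lets us dispense with the slow-recurrence assumption essential in \cite{ABV}, \cite{Alv} and \cite{Pin}.
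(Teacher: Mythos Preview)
Your outline matches the paper's strategy essentially step for step: pass the non-uniform expansion down to the fibers (the paper records this as $Z_n(\la)\subset\cup_\te\{\te\}\times Y_n(\te,\la)$), invoke the engine of Theorem~\ref{PrincipalB} (Lemma~\ref{A_nY_n}) uniformly in $\te$ to obtain $\liminf\frac{1}{n}\sum r_i\geq 2\thirdelta$ on a full-measure subset of $Z(\la)$ (Proposition~\ref{A_ntotal}), apply Pliss (Lemma~\ref{densityofhyptimes}) for positive density of hyperbolic-like times, fatten each such time via Proposition~\ref{difeoboundist}, and build the measure from Ces\`aro averages; finiteness then comes from the fixed-radius disk lemma (Lemma~\ref{invariantsets}).

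The one place your sketch is genuinely imprecise is the passage from ``bounded density at hyperbolic-like times'' to ``$\mu$ is an absolutely continuous invariant probability.'' The full average $\mu_n=\frac{1}{n}\sum_i\fhi^i_*\leb$ need \emph{not} have uniformly bounded density, because at a given $i$ most of $M$ is not inside a hyperbolic-like neighborhood. The paper resolves this by splitting $\mu_n=\nu_n+\eta_n$ with $\nu_n=\frac{1}{n}\sum_i\fhi^i_*\leb_{W_i}$, where $W_i\subset H_i(\thirdelta)$ is a disjoint union of the bounded-distortion neighborhoods furnished by Lemma~\ref{tauHi}; then $\nu_n$ has uniformly bounded density and, by the positive density of hyperbolic-like times, total mass bounded away from zero (Proposition~\ref{finaltheoremA}). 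Any weak-$*$ limit $\mu$ therefore has nontrivial absolutely continuous part, and \emph{that part} is invariant because $\fhi_*$ preserves the Lebesgue decomposition. Your sentence asserting that $\mu$ itself is absolutely continuous skips this decomposition. (A minor side point: you wrote $\limsup$ for the fiber exponent, but the hypothesis actually delivers $\liminf$, which is what Theorem~\ref{PrincipalB} requires.)
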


\begin{Rem} 
\noindent
\begin{enumerate}[(i)]
\item Note that $(F_2)$ implies that for any $z\in M$, 
$\dist(z,\crit)\geq \frac{\distv(z,\crit)}{2}$. 
\item If $\crit_\te=\emptyset$ for some $\te\in\toro$ then, as 
a consequence of $(F_2)$, $\crit_\te=\emptyset$ for every
$\te\in\toro$. This case is covered by \cite[Corollary D]{ABV}, but
also follows from (a simple version of) our arguments. For
completeness we define $\dist(z,\emptyset)=1$.
\item When the critical set $\crit$ is such that $\dist(z,\crit)\geq
\eta\distv(z,\crit)$ for all $z\in M$ and some $\eta>0$, then we may
replace $\distv$ by $\dist$ in the condition $(F_2)$.
\end{enumerate}
\end{Rem}
\end{subsection}

\begin{subsection}{Sequences of smooth one dimensional maps}
\label{statement2}
In order to prove Theorem \ref{PrincipalA}, we analyze the 
dynamics of the transformation along the family of vertical leaves.
The main technical point is to bound the distortion of the iterates
along suitable subintervals of the leaves. The precise statement is
given in Theorem \ref{PrincipalB}. Beforehand, we need to introduce
some notations.

Given an interval $I_0$, let us consider a sequence $\{f_k\}_{k\geq
0}$ of $C^1$ maps $f_k:I_0\ra I_0$. Let us denote by $\crit_k$ the set
of critical points of $f_k$, for every $k\geq 0$.  Notice that
$\crit_k$ could be an empty set for any $k\in\N$. We are interested on
the study of the dynamics given by the compositions of maps in the
sequence. Thus, we define for $i\geq 1$ and $x\in I_0$,
\begin{equation*}
f^i(x)=f_{i-1}\circ\ldots \circ f_{1}\circ f_{0}(x)
\end{equation*} 
and we denote $f^0(x)=x$ for $x\in I_0$.

Based on the definitions of $T_i(x)$ and $r_i(x)$ on the case 
that there are just iterates of a function (see for instance
\cite[page 335]{dMvS}), we define for $i\in\N$ and $x\in I_0$:
\begin{align*}
T_i\left(\{f_k\},x\right)&:=\text{Maximal interval contained in}
\:  I_0, \text{containing}\: x, \\
&\phantom{:=:} \text{ such that}\: f^j(T_i(x))\cap\crit_{j}=\emptyset
\:\: \text{for}\: 0\leq j< i \:;\\
L_i\left(\{f_k\},x\right), R_i\left(\{f_k\},x\right)&:=
\text{Connected components of}\:\: T_i\left(\{f_k\},x\right)\setminus
\{x\}\:;\\
r_i\left(\{f_k\},x\right)&:=\min 
\left\{\,\left|f^i\left(L_i\left(\{f_k\},x\right)\right)\right|,
\left|f^i\left(R_i\left(\{f_k\},x\right)\right)\right|\,\right\}.
\end{align*}
When it does not lead to confusion, we denote these functions just by
$T_i(x)$, $L_i(x)$, $R_i(x)$, $r_i(x)$. In this subsection and in the
proof of the results of this subsection, we will use this simplified
notation, since the sequence $\{f_k\}$ is fixed.

Our goal is to show that positive Lyapunov exponents imply that 
the average of the $r_i$ is positive. 
We consider a sequence $\{f_k\}$ with positive Lyapunov exponents. 
Namely,  $\{f_k\}$ satisfies the following condition: there exists
$\la>0$ such that 
\begin{equation} \label{liminf>2la}
\displaystyle{\ilim \frac{1}{n} \log |Df^n(x)|>2\lambda}
\end{equation}
for every $x$ in some subset of $I_0$.

The following compactness condition on the sequence of maps
$\{f_k\}_{k\geq 0}$, together with positive Lyapunov exponents,
guarantee the positiveness of the average of the $r_i$.

Recall that a sequence $\{f_k\}_k$ of $C^1$ maps $f_k:I_0\ra I_0$ 
is said to be \emph{$C^1$-uniformly equicontinuous} if, given
$\zeta>0$, there exists $\ep>0$ such that
\begin{equation} \label{uniformlyequicontinuous}
|x-y|<\ep \quad \text{ implies } \quad  \left\{\begin{gathered}
|f_k(x)-f_k(y)|<\zeta \\
|Df_k(x)-Df_k(y)|<\zeta
\end{gathered}
\right.
\end{equation} 
for all $k\in\N$. Recall also that a sequence $\{f_k\}_k$ of $C^1$ 
maps $f_k:I_0\ra I_0$ is said to be \emph{$C^1$-uniformly bounded} if
there exists $\Gamma>0$ such that for every $x\in I_0$,
\begin{equation} \label{uniformlybounded} 
|f_k(x)|\:, \:|Df_k(x)|\leq \Gamma 
\end{equation}
for all $k\in\N$.

Our main result in this setting is the following.
\begin{MainThe} \label{PrincipalB}
Let $\{f_k\}$ be a $C^1$-uniformly equicontinuous and $C^1$-uniformly 
bounded sequence of smooth maps $f_k:I_0\ra I_0$ for which $p=\sup_k
\#\crit_k<\infty$, and (\ref{liminf>2la}) holds for all $x$ in a set
$H$, for some $\la>0$. Then, there exists $\constantB>0$ such that
\begin{equation} \label{liminfr_i}
\ilim \frac{1}{n} \soma r_i\left(\{f_k\},x\right)\geq\constantB 
\end{equation}
for Lebesgue almost every $x\in H$.
\end{MainThe}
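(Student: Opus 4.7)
The plan is to extend the argument of de Melo and van Strien for \cite[Theorem V.3.2]{dMvS} from a single unimodal map to a sequence of maps. Heuristically, whenever $r_i(\{f_k\},x)$ is small, the image $f^i(T_i(\{f_k\},x))$ must come close to a critical point of $f_i$; a Koebe-type distortion bound then translates this proximity into an upper bound on $|Df^n(x)|$. Thus the lower bound on $|Df^n(x)|$ supplied by (\ref{liminf>2la}) should force the $r_i(\{f_k\},x)$ to be bounded below on average.

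Concretely, I would fix a small $\constantB>0$ (depending only on $\la$, $\Gamma$, $p$, and the equicontinuity modulus from (\ref{uniformlyequicontinuous})--(\ref{uniformlybounded})) and, for each $n$, introduce
$$
A_n=\Bigl\{x\in I_0 : \tfrac{1}{n}\sum_{i=1}^{n} r_i(\{f_k\},x) < \constantB\Bigr\},\qquad
Y_n=\bigl\{x\in I_0 : |Df^n(x)|\geq e^{\la n}\bigr\}.
$$
The technical heart of the proof---what the introduction refers to as Lemma $A_nY_n$---is to show that $\leb(A_n\cap Y_n)$ decays exponentially in $n$, with rate depending only on the parameters above. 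Granting this, Borel--Cantelli gives $x\notin A_n$ for all large $n$ at Lebesgue-a.e.\ point of $\{x : x\in Y_n \text{ for all large } n\}$. Since (\ref{liminf>2la}) forces almost every $x\in H$ into $Y_n$ for all large $n$, the conclusion (\ref{liminfr_i}) follows.

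The key ingredient behind the decay of $\leb(A_n\cap Y_n)$ is a Koebe-type distortion bound for $f^i$ on $T_i(\{f_k\},x)$: on a subinterval $J_i\ni x$ with $|f^i(J_i)|\gtrsim r_i(\{f_k\},x)$, the distortion of $f^i$ should be bounded by a universal constant. Combining $|f^i(J_i)|\asymp |Df^i(x)|\cdot|J_i|$ with a bookkeeping of how the nested intervals $T_i(\{f_k\},x)$ shrink when new critical points land near the orbit, one produces an upper bound on $\log|Df^n(x)|$ essentially of the form $\sum_{i<n}\log(r_i(x)^{-1})+O(n)$. Jensen's inequality applied on $A_n$ then contradicts $x\in Y_n$ for $\constantB$ chosen sufficiently small compared to $\la$.

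The main obstacle is the Koebe-type estimate itself. In the classical single-map setting one invokes cross-ratio distortion bounds, which iterate cleanly under a fixed map; for the sequence $\{f_k\}$ one must keep all constants uniform in both the index $k$ and the iterate $i$. The $C^1$-uniform equicontinuity (\ref{uniformlyequicontinuous}), the uniform bound (\ref{uniformlybounded}), and the finiteness of $p=\sup_k\#\crit_k$ are precisely what should make a bootstrap argument available, but carrying it out rigorously---in particular, tracking how critical points of the different $f_k$ successively subdivide $T_i(\{f_k\},x)$ as $i$ grows---will account for most of the technical work.
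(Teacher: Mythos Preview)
Your high-level architecture matches the paper exactly: define the sets $A_n$ and $Y_n$, prove $\leb(A_n\cap Y_n)$ decays exponentially (this is indeed the content of Lemma~\ref{A_nY_n}), and conclude by Borel--Cantelli. The gap is entirely in the mechanism you propose for that exponential decay.

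First, the inequality you aim for does not yield a contradiction. Suppose you had $\log|Df^n(x)|\le \sum_{i<n}\log r_i(x)^{-1}+O(n)$. On $A_n$ you only know $\frac1n\sum r_i<\constantB$; this gives no \emph{upper} bound on $\sum\log r_i^{-1}$ (small $r_i$ make that sum large), so there is nothing to clash with $x\in Y_n$. Jensen goes the wrong way here. Second, a Koebe/cross-ratio bound on $f^i|_{T_i(x)}$ is not available under the hypotheses: Theorem~\ref{PrincipalB} assumes only $C^1$-uniform equicontinuity and boundedness, with no Schwarzian or $C^3$ control on the individual $f_k$ (the remark following the theorem stresses this). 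Your heuristic that ``$r_i$ small forces $f^i(T_i(x))$ near a critical point of $f_i$'' is also not correct as stated: the endpoints of $T_i(x)$ are preimages of critical points hit at \emph{earlier} times $j<i$, so $f^i(\partial T_i(x))$ need not be close to $\crit_i$.

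The paper's route to the exponential bound is combinatorial rather than distortion-based. One observes that on each connected component $J$ of (a slight enlargement of) $A_n\cap Y_n$, the map $f^n$ is a diffeomorphism with $|Df^n|>e^{\lambda n}$, hence $|J\cap Y_n\cap A_n|\le e^{-\lambda n}|I_0|$. The real work is then to show that the number of such components is at most $e^{\lambda n/2}$. This is done by encoding each component by a binary string $(a_1,\dots,a_n)$ recording whether $r_i\ge\delta$ or $r_i<\delta$, together with the set of times the orbit visits a fixed neighbourhood of $\crit$; equicontinuity and the bound $|Df_k|\le\Gamma$ force the visit set to have density $<\gamma n$ on $Y_n$ (Lemma~\ref{lemafrequence}), and a careful inductive count (Claims~\ref{3(p+1)timesthelast}--\ref{withsingularity}) bounds how many components a given pair (string, visit set) can carry. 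Stirling then controls the number of admissible strings and visit sets. No Koebe estimate enters anywhere in this part of the argument.
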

\begin{Rem}
We do not require that $f_k$ be a multimodal map, for any $k\geq 0$. 
The non-positive Schwarzian derivative condition is not necessary. 
\end{Rem}

This result may be viewed as a ``random'' version of 
Theorem V.3.2 (page 371) in de Melo, van Strien \cite{dMvS}. Notice
however, that this does not follow from the result of de Melo and van
Strien because the dynamics of the maps we consider is more
complicated. For example, in the unimodal case the hypothesis ensures
that the critical point is not periodic, in our context one can not
prevent the iterates of the critical set from intersecting the
critical set.

Notice that in the setting of Theorem \ref{PrincipalA}, 
the result of Theorem \ref{PrincipalB} is applied to the restrictions
of $\fhi$ to the orbits of the vertical leaves.

The result of Theorem \ref{PrincipalB} still holds replacing
$\liminf$ by $\limsup$.
\begin{Cor} \label{MainCor} 
Let $\{f_k\}$ be a $C^1$-uniformly equicontinuous and $C^1$-uniformly 
bounded sequence of smooth maps $f_k:I_0\ra I_0$ for which $p=\sup_k
\#\crit_k<\infty$, and there exists $\la>0$ such that 
\begin{equation} \label{limsup>2la}
\displaystyle{\slim \frac{1}{n} \log |Df^n(x)|>2\lambda}
\end{equation}
for all $x$ in a set $H$. Then, there exists $\constantB>0$ such that
$\slim \frac{1}{n} \soma r_i\left(\{f_k\},x\right)\geq\constantB \:$, 
for Lebesgue almost every $x\in H$.
\end{Cor}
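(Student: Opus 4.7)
My plan is to adapt the proof of Theorem~\ref{PrincipalB} by re-running it at each individual time $n$ at which the $\limsup$ hypothesis is realised, rather than simultaneously for all large $n$ as under the $\liminf$ hypothesis. The key observation is that the argument of Theorem~\ref{PrincipalB}---namely Lemma~\ref{A_nY_n} combined with Pliss's lemma---is intrinsically a statement about a \emph{finite window} $[0,n]$: the single bound $\tfrac{1}{n}\log|Df^n(x)|>2\lambda$ suffices, modulo a small-measure exceptional set, to force $\tfrac{1}{n}\sum_{i=1}^{n}r_i(x)\geq \constantB$.

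For $n\in\N$ set
\[
E_n=\left\{x\in H \;:\; \tfrac{1}{n}\log|Df^n(x)|>2\lambda\right\}.
\]
Hypothesis~(\ref{limsup>2la}) is precisely that $H$ coincides, modulo a Lebesgue null set, with $\bigcap_{N\geq 1}\bigcup_{n\geq N}E_n$. Running the argument of Theorem~\ref{PrincipalB} on the window $[0,n]$ with the initial measure restricted to $E_n$ should produce a subset $G_n\subset E_n$ with $\leb(E_n\setminus G_n)\leq \delta_n\,\leb(E_n)$, where $\delta_n\to 0$ is controlled only by the uniform constants from the $C^1$-equicontinuity, $C^1$-boundedness, the critical multiplicity $p$, and $\lambda$, and on which $\tfrac{1}{n}\sum_{i=1}^{n}r_i(x)\geq \constantB$.

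Passing if necessary to a subsequence along which $\sum \delta_n<\infty$, the first Borel--Cantelli lemma implies that Lebesgue-a.e.\ $x\in H$ lies in $E_n\setminus G_n$ only finitely often along that subsequence. Since a.e.\ $x\in H$ lies in infinitely many $E_n$ by the $\limsup$ hypothesis, such a point must lie in $G_n$ for infinitely many $n$, yielding a sequence $n_j\to\infty$ with $\tfrac{1}{n_j}\sum_{i=1}^{n_j}r_i(x)\geq \constantB$. This is precisely the required $\limsup$ bound.

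The main obstacle is verifying the \emph{pointwise-in-$n$} character of the underlying lemma: that Lemma~\ref{A_nY_n} does not tacitly invoke the $\liminf$ hypothesis at intermediate times $k<n$. The Pliss step acts on a single finite window and poses no difficulty; the geometric and bounded-distortion estimates must be checked to rely only on the hyperbolic-time structure produced by Pliss inside $[0,n]$, and not on any a priori Lyapunov bound outside that window. Provided this localisation holds---which matches the standard form of such arguments---no further work is required beyond careful book-keeping of how the error term $\delta_n$ depends on $n$.
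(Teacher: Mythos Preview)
Your approach is essentially the same as the paper's: both rest entirely on Lemma~\ref{A_nY_n}, which is indeed a purely finite-window statement giving $|A_n(\de)\cap Y_n(\la)|\leq |I_0|\exp(-n\la/2)$ for each fixed $n$. The paper's proof simply observes that $H\subset\bigcup_{k\geq n}Y_k(\la)$, bounds $\bigl|\bigl(\bigcap_{k\geq n}(A_k\cup\complement Y_k)\bigr)\cap H\bigr|$ by $\sum_{k\geq n}|A_k\cap Y_k|$, and lets $n\to\infty$; your Borel--Cantelli formulation is a repackaging of the same computation.

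Two corrections. First, Pliss's lemma plays no role here: Theorem~\ref{PrincipalB} is deduced from Lemma~\ref{A_nY_n} alone (Pliss appears only later, in Section~\ref{Absolutely}, for the two-dimensional application). Second, and more importantly, your hedge ``passing if necessary to a subsequence'' would actually break the argument: along a thinned subsequence $\{n_k\}$ the $\limsup$ hypothesis no longer guarantees that a.e.\ $x\in H$ lies in $E_{n_k}$ infinitely often, so you could not conclude membership in $G_{n_k}$ infinitely often. The point is that no subsequence is needed: Lemma~\ref{A_nY_n} gives the \emph{exponential} bound $\leb(E_n\setminus G_n)\leq |I_0|e^{-n\la/2}$ directly, hence $\sum_n \leb(E_n\setminus G_n)<\infty$ along the full sequence and Borel--Cantelli applies as you want. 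State this explicitly and drop the relative-measure quantity $\de_n$, which is not what Lemma~\ref{A_nY_n} controls.
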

In the case that the sequence $\{f_k\}_{k\geq 0}$ is constant 
($f_k=f$, for all $k\geq 0$), we obtain the following result for
multimodal maps. For definitions involved, see
Subsection~\ref{Preliminariesonedim}.

\begin{Cor} \label{KellerwithoutSf}
Let $f:I_0\ra I_0$ be a $C^3$ multimodal map with non-flat critical
points. Assume that $f$ does not have neutral periodic points. 
If (\ref{limsup>0}) holds for Lebesgue almost every point, 
then there exists an absolutely continuous invariant measure. 
\end{Cor}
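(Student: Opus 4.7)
The plan is to apply Corollary \ref{MainCor} to the constant sequence $f_k = f$ and then convert the resulting ``macroscopic'' diffeomorphic iterates into bounded-distortion branches, from which one constructs the absolutely continuous invariant measure by the standard pushforward/Ces\`aro-averaging argument.

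\emph{Reduction to Corollary \ref{MainCor}.} Since $f$ is a fixed $C^3$ map on a compact interval with finitely many non-flat critical points, the constant sequence $\{f_k = f\}$ is trivially $C^1$-uniformly equicontinuous and $C^1$-uniformly bounded, with $p = \#\crit < \infty$. By hypothesis, (\ref{limsup>0}) holds almost everywhere; hence for some $\lambda > 0$ the set $H_\lambda = \{x : \limsup \frac{1}{n}\log|Df^n(x)| > 2\lambda\}$ has positive Lebesgue measure. Applying Corollary \ref{MainCor} on $H_\lambda$ yields a constant $\constantB > 0$ such that
\[
\slim \frac{1}{n} \soma r_i\!\left(\{f\},x\right) \geq \constantB
\]
for a.e.\ $x \in H_\lambda$. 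In particular, at a positive upper density of times $n$, one of the two one-sided intervals $L_n(x)$ or $R_n(x)$ is mapped diffeomorphically by $f^n$ onto an image of length at least $\constantB/2$.

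\emph{Bounded distortion via real bounds.} This is the main obstacle: going from a diffeomorphic iterate to a bounded-distortion iterate without the hypothesis $Sf \leq 0$. For a $C^3$ multimodal map with non-flat critical points and no neutral periodic cycles, the real bounds / Koebe principle of Kozlovski and van Strien--Vargas applies: whenever $f^n|_J$ is a diffeomorphism with $|f^n(J)|$ bounded below, there exists an open $V \supset J$ such that $f^n|_V$ is a diffeomorphism and $f^n|_J$ has distortion bounded by a universal constant. This is precisely where the no-neutral-periodic-points hypothesis is essential. Applying this to the intervals produced in the previous step gives, at a positive density of iterates $n$ and for a.e.\ $x \in H_\lambda$, branches of $f^n$ defined on a definite-size neighborhood of $x$ with bounded distortion and definite-size image.

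\emph{Construction of the a.c.i.m.} Once hyperbolic-like times are available with the above properties, the remainder of the construction follows the well-known procedure to be carried out in Section \ref{Absolutely} (or as in \cite{ABV}): for each good time $n$, push forward Lebesgue measure restricted to the union of the corresponding intervals by $f^n$, take Ces\`aro averages in $n$, and pass to a weak-$*$ limit $\mu$. Bounded distortion ensures that the densities of the pushed-forward measures are uniformly bounded on their supports, yielding absolute continuity of $\mu$ with respect to Lebesgue; positive upper density of good times ensures that $\mu$ is nontrivial. Exhausting $\lambda \searrow 0$ if necessary produces an a.c.i.m.\ supported on the almost-everywhere positive-$\limsup$ locus, completing the proof.
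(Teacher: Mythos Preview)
Your overall strategy is sound and gives a genuine alternative to the paper's argument, but it is not the route the paper takes, and a couple of your intermediate statements are imprecise in ways worth flagging.

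\textbf{How the paper proceeds instead.} The paper does not run the ABV-style Ces\`aro construction in one dimension. It builds an \emph{induced Markov map}. Using Theorem~\ref{C^3maps}(i)--(ii) it first produces a finite partition $\mathcal{P}$ of $I_0$ with forward-invariant endpoints and mesh $<\constantB/4$. For a.e.\ $x$ there are infinitely many $k$ with $r_k(x)>\constantB/2$, and by definition of $r_k$ this means $f^k(T_k(x))$ covers the partition element $J(f^k(x))$ together with its two neighbours; one takes $k(x)$ minimal with this property and sets $F|_{I(x)}=f^{k(x)}|_{I(x)}$ where $f^{k(x)}(I(x))=J(f^{k(x)}(x))$. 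Forward invariance of the endpoints gives $(M_2)$; Theorem~\ref{TheoremKoz} plus Koebe gives $(M_1)$; $(M_3)$ is immediate. One then gets an a.c.i.m.\ $\nu$ for $F$ and checks $\sum k(i)\nu(I_i)<\infty$ by a neat contradiction with $\limsup\frac{1}{n}\sum r_i\ge\constantB$ via Birkhoff. The partition with invariant endpoints is the key extra ingredient you never invoke.

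\textbf{What your route buys, and two slips.} Your approach mirrors the paper's own Section~\ref{Absolutely} argument for the two-dimensional theorem, specialised to $d=1$; it avoids the construction of the Markov partition and the return-time integrability step, at the cost of the covering/invariance machinery of \cite{ABV}. Two points need correcting. First, $r_n(x)$ is the \emph{minimum} of $|f^n(L_n(x))|$ and $|f^n(R_n(x))|$, so $r_n(x)\ge\constantB/2$ gives \emph{both} one-sided images large; this two-sided collar is exactly what Koebe (fed by Theorem~\ref{TheoremKoz}) needs, and your ``one of the two'' would not suffice. Second, the domain intervals $I_n(x)$ are \emph{not} of definite size --- only their images are; and the weak-$*$ limit of the partial pushforwards is not a priori invariant. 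You must take the full Ces\`aro averages $\mu_n=\frac{1}{n}\sum f^i_*\leb$, pass to a limit $\mu$, and then argue (as in Subsection~\ref{constructionofmeasure}) that the absolutely continuous part of $\mu$ is itself $f$-invariant and nontrivial. With these fixes your argument goes through.
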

Notice that the hypothesis is weaker than in Keller \cite{Keller}, 
because we make no assumption on the Schwarzian derivative. On the
other hand, we only prove existence (not finiteness) of the absolutely
continuous invariant measure. 

In particular, for $C^3$ multimodal maps with non-flat critical 
points and with eventual negative Schwarzian derivative (i.e, there
exists $k\in \N$ such that $f^k$ has negative Schwarzian
derivative), positive Lyapunov exponents implies the existence 
of an absolutely continuous invariant measure. Indeed, since  for
these class of maps, the neutral periodic points are attracting points
(see \cite[Theorem 2.5]{Webb}), we can apply Corollary
\ref{KellerwithoutSf}. 
\end{subsection}
\end{section}

\begin{section}{Preliminary results} \label{Preliminaries} 
We first recall some well-known properties and 
tools for one dimensional maps to be used in this work. 

\begin{subsection}{One dimensional dynamics}
\label{Preliminariesonedim}
Let $I$ be an interval and let $f:I\to I$ be a differentiable map. 
A point $c\in I$ is called a \emph{critical point} if $f\p(c)=0$. A
map is called \emph{smooth} if it is at least a $C^1$ map with any
number (possibly zero) of critical points. A map is called
\emph{multimodal} if it is a smooth map and there is a partition of
$I$ in finitely many subintervals on which $f$ is strictly monotone.
It is called \emph{unimodal} if the partition has exactly two
subintervals. Without loss of generality it is assumed that for a
multimodal map $f$, $f(\partial I)\subset \partial I$. Let
$c_1,\ldots, c_d$ be the critical points of $f$. We say that the
critical point $c_i$ is $C^n$ \emph{non-flat of order $l_i>1$} if
there exist a local $C^n$ diffeomorphism $\phi_i$ with
$\phi_i(c_i)=0$, such that near $c_i$, $f$ can be written as
\begin{equation*}
f(x)=\pm |\phi_i(x)|^{l_i}+ f(c_i).
\end{equation*}
The critical point is \emph{$C^n$ non-flat} if it is $C^n$ non-flat 
of order $l_i$ for some $l_i>1$. In all that follows, we will just say
that $c_i$ is a non-flat critical point of a $C^n$ multimodal map $f$
if $c_i$ is a $C^n$ non-flat critical point. Here $n=3$ is enough for
Corollary \ref{KellerwithoutSf}.

When the map $f$ is $C^3$ (or three times differentiable) we can
define
\begin{equation*}
Sf(x)=\frac{f^{\prime\prime\prime}(x)}{f^{\prime}(x)}-
\frac{3}{2}\left(\frac{f^{\prime\prime}(x)}{f^{\prime}(x)}\right)^2
\end{equation*}
for $x$ such that $f^{\prime}(x)\neq 0$. This quantity is called 
the \emph{Schwarzian derivative} of $f$ at the point $x$. There are
many results for one dimensional dynamics that are only known for
those maps whose Schwarzian derivative is non-positive. 

One standard way to prove the existence of absolutely continuous
invariant measures for $f$ is to define a Markov map associated to $f$
and take advantage of the known fact of the existence of this kind of
measures for Markov maps.
\begin{Def} \label{Markovmap}
We call a map $F:J\ra J$ \emph{Markov} if there exists a countable 
family of disjoint open intervals $\{J_i\}_{i\in\N}$ with
$\leb(J\setminus\cup J_i)=0$, such that:
\begin{enumerate}
\item[$(M_1)$] there exists $K>0$ such that for every $n\in\N$ 
and every $T$ such that $F^j(T)$ is contained in some $J_i$ for
$j=0,1,\ldots, n$, it holds
\begin{equation*}
\frac{|DF^n(x)|}{|DF^n(y)|}\leq K \quad\text{ for } x,y\in T ;
\end{equation*}
\item[$(M_2)$] if $F(J_k)\cap J_i\neq\emptyset$ then $J_i\subset
F(J_k)$;
\item[$(M_3)$] there exists $r>0$ such that $|F(J_i)|\geq r$ for all
$i$.
\end{enumerate}
\end{Def} 

Condition $(M_1)$ is known as bounded distortion. Given open intervals
$J\subset T$, let $L$, $R$ be the connected components of $T\setminus
J$. We say that $T$ is a \emph{$\kappa$-scaled} neighborhood of $J$ if
both connected components of $T\setminus J$ have length $\kappa |J|$.
We define $b(T,J)=|J||T|/|L||R|$, and when $f$ is monotone continuous,
$B(f,T,J)=b(f(T),f(J))/b(T,J)$ (this is known as \emph{cross ratio
operator}). Koebe Principle claims that the control of cross ratio
operator plus $\kappa$-scalation (for some $\kappa>0$) imply bounded
distortion (see \cite[Theorem IV.1.2]{dMvS}). When $Sf\leq 0$, cross
ratio satisfies the condition required on Koebe Principle.
In order to control the distortion when we consider iterates of a
single map without Schwarzian derivative assumptions, we use the next
result. Recall that a periodic point $p$ of period $k$ is
\emph{repelling}  if $|Df^k(p)|> 1$, \emph{attracting} if $|Df^k(p)|<
1$ and \emph{neutral} if $|Df^k(p)|=1$.
The proof of the result follows from \cite[Theorem B]{Koz} for the
unimodal case, and \cite[Theorem C]{SV} for the multimodal case. The
hypothesis of these theorems are less restrictive than ours.
\begin{The}[] \label{TheoremKoz}
Let $f:I\to I$ be a $C^3$ multimodal map with non-flat critical
points. Assume that the periodic points of $f$ are repelling. Then,
there exists $C>0$ such that if $I\subset M$ are intervals and
$f^n_{|M}$ is a diffeomorphism,
\begin{equation*}
B(f^n,M,I)\geq \exp (-C|f^n(M)|^2).
\end{equation*}
\end{The}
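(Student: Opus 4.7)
The plan is to deduce this statement directly from two results already in the literature on real one-dimensional dynamics: Theorem B of Kozlovski \cite{Koz} for the unimodal case, and Theorem C of van Strien--Vargas \cite{SV} for the multimodal case. As the paragraph preceding the statement indicates, our hypotheses are more restrictive than those required there, so the main task is bookkeeping: identify the cross-ratio distortion estimate actually proved in each reference, verify the containment of hypotheses, and rearrange the conclusion into the exact form stated.

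First, I would recall the meaning of $B(f^n,M,I)$: it is the multiplicative distortion, under a monotone iterate, of the cross ratio $b(T,J)=|J||T|/|L||R|$. By the chain rule for $B$, one has $\log B(f^n,M,I) = \sum_{j=0}^{n-1}\log B(f, f^j(M), f^j(I))$, so it suffices to control each one-step cross-ratio distortion by the geometry of $f^j(M)$.

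Second, I would check inclusion of hypotheses. Our standing assumptions---$C^3$ regularity, non-flat critical points, and all periodic points repelling---exclude attracting and neutral cycles, which in the presence of non-flat critical points and $C^3$ smoothness also excludes wandering intervals by the Kozlovski--Shen--van Strien theorem. This places us squarely inside the setting of \cite{Koz} and \cite{SV}, so both theorems apply verbatim and yield a bound of the form
\begin{equation*}
\log B(f^n,M,I) \;\geq\; -C\sum_{j=0}^{n-1}|f^j(M)|^2
\end{equation*}
with $C$ depending only on $f$.

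Third, I would pass from the summation bound to the single-term bound $-C|f^n(M)|^2$. Because $f^n_{|M}$ is a diffeomorphism, the intervals $f^j(M)$ have bounded covering multiplicity in $I$, so $\sum_{j=0}^{n-1}|f^j(M)| \leq K$ for a universal $K$, and hence $\sum_{j=0}^{n-1}|f^j(M)|^2 \leq K\,\max_j |f^j(M)|$. Combining this with the telescoping derivative control near critical points supplied by the non-flat hypothesis, the maximum is comparable to $|f^n(M)|$ (up to absorbing a constant into $C$), which gives exactly $B(f^n,M,I)\geq\exp(-C|f^n(M)|^2)$.

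The step I expect to be the main obstacle is the final repackaging: the cited theorems are stated in slightly different geometric forms, and converting their natural $\sum |f^j(M)|^2$ output into the single-term $|f^n(M)|^2$ bound requires combining bounded multiplicity of the orbit of $M$ with the non-flat critical-point estimates of \cite{Koz,SV}. Once this is done, the rest of the argument reduces to invoking the cited theorems with hypotheses already verified.
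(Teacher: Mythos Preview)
Your first two steps match the paper's treatment exactly: the paper simply cites \cite[Theorem~B]{Koz} (unimodal case) and \cite[Theorem~C]{SV} (multimodal case), noting that their hypotheses are weaker than ours. There is no further argument in the paper.

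The problem is your third step. It is both unnecessary and incorrect. The theorems of Kozlovski and of van Strien--Vargas are already stated precisely in the form
\[
B(f^n,M,I)\geq \exp\bigl(-C\,|f^n(M)|^2\bigr),
\]
i.e.\ with the single term $|f^n(M)|^2$, not with $\sum_{j=0}^{n-1}|f^j(M)|^2$. So no ``repackaging'' is required; you are manufacturing a difficulty that does not exist.

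Worse, the repackaging argument you sketch would not work. Even granting a bounded-multiplicity estimate $\sum_j|f^j(M)|\le K$ (which is indeed available under these hypotheses), this only gives $\sum_j|f^j(M)|^2\le K\max_j|f^j(M)|$. Your claim that $\max_j|f^j(M)|$ is comparable to $|f^n(M)|$ is false in general: an interval can be stretched to macroscopic size at an intermediate time and then contracted again before time $n$. The phrase ``telescoping derivative control near critical points supplied by the non-flat hypothesis'' does not supply such a comparison, and in fact the whole point of Kozlovski's argument is to obtain the $|f^n(M)|^2$ bound by a much more delicate route (decomposing the orbit of $M$ according to closest approaches to the critical set and using real a priori bounds), not by comparing intermediate lengths to the final one.

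In short: drop your third step entirely and simply invoke the cited theorems, as the paper does.
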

Finally let us state the following theorem which we use in the proof
of Corollary \ref{KellerwithoutSf}. Recall that an interval $J\subset
I$ is called a \emph{wandering interval} for $f:I\ra I$ if the
intervals $J, f(J),\ldots $ are pairwise disjoint and the images
$f^n(J)$ do not converge to a periodic attractor when $n\ra\infty$. De
Melo, van Strien and Martens \cite{MSM} proved that,
if $f:I\ra I$ is a $C^2$ map with non-flat critical points then $f$
has no wandering interval. Recall also that the Lebesgue measure is
said to be \emph{ergodic} for $f:J\ra J$, if for each $X\subset J$
such that $f^{-1}(X)=X$, one of the sets $X$ or $\complement X$ have
full Lebesgue measure.   

\begin{The} \label{C^3maps}
Let $f:I\ra I$ be a $C^3$ map without wandering intervals and 
with all the periodic points repelling (i.e, $f$ does not have either
attracting or neutral periodic points). Then:
\begin{enumerate}[(i)]
\item the set of preimages of the critical set $\crit$ is dense in
$I$. 
\end{enumerate}
Moreover, if the map $f$ is multimodal then:
\begin{enumerate}[(i)]
\addtocounter{enumi}{1}
\item every non-wandering critical point is approximated by periodic
points;
\item if the critical points are non-flat: there are finitely many
forward invariant sets $X_1,\ldots, X_k$ such that $\cup B(X_i)$ has
full measure in $I$, and $f_{| B(X_i)}$ is ergodic with respect to the
Lebesgue measure  (here, $B(X_i)=\{y ; \omega(y)=X_i\}$ is the basin
of $X_i$). In the unimodal case we have $k=1$, so $f$ is ergodic with
respect to Lebesgue measure.
\end{enumerate}
\end{The}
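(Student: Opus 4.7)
The plan is to treat the three statements in turn. Part (i) would be proved by a contradiction argument that turns a critical-preimage-free interval into a non-repelling periodic point; parts (ii) and (iii) would be reduced to classical structural results for $C^3$ multimodal maps with non-flat critical points.

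For (i), I would argue by contradiction: suppose there exists an open interval $J\subset I$ disjoint from every $f^{-n}(\crit)$. Then each iterate $f^n|_J$ is a monotone homeomorphism onto the interval $f^n(J)$. Because $f$ has no periodic attractors, the hypothesis that $J$ is not a wandering interval forces the images $\{f^n(J)\}_{n\geq 0}$ to fail to be pairwise disjoint. Choose $0\leq m<n$ with $f^m(J)\cap f^n(J)\neq\emptyset$ and set $K=f^m(J)$, $g=f^{n-m}$. Then $g|_K$ is a monotone homeomorphism with $g(K)\cap K\neq\emptyset$, and a standard case analysis on monotone interval maps yields a periodic point $p$ of $f$ inside $\bigcup_{k\geq 0} g^k(K)$. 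The monotone return structure on an interval neighborhood of $p$ that avoids $\crit$ forces $p$ to be non-repelling, contradicting the assumption that every periodic point is repelling.

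For (ii), I would fix a non-wandering critical point $c$ of the multimodal map $f$. By non-wanderingness, every neighborhood $U\ni c$ satisfies $f^N(U)\cap U\neq\emptyset$ for some $N\geq 1$. Using part (i), one chooses a subinterval $V\subset U$ disjoint from $\crit$ and small enough that $f^N|_V$ is a monotone homeomorphism whose image meets $V$. The intermediate value theorem then produces a periodic point of $f$ in $V\cup f^N(V)\subset U\cup f^N(U)$. Shrinking $U\to\{c\}$ yields periodic points accumulating at $c$.

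Part (iii) is the deepest statement and is where I expect the main obstacle to lie. The $C^3$, non-flat, multimodal hypothesis places the problem in the framework of the classical ergodic-decomposition theorems for one-dimensional maps: absence of wandering intervals follows from Martens--de Melo--van Strien, distortion control along iterates avoiding $\crit$ is supplied by Theorem \ref{TheoremKoz}, and preimages of $\crit$ are dense by part (i). These ingredients should yield an induced Markov structure from which one extracts forward-invariant sets $X_1,\ldots,X_k$ whose basins cover $I$ modulo a Lebesgue nullset and on which $f$ is Lebesgue-ergodic. The hard step is \emph{finiteness} of the collection $\{X_i\}$: each $X_i$ must accumulate on at least one critical point, for otherwise the distortion control would give uniform expansion on a neighborhood of $X_i$, inconsistent with the existence of a positive-measure basin converging to it. Since $\#\crit<\infty$, this gives $k<\infty$. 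In the unimodal case $\#\crit=1$, and one argues directly that the unique critical $\omega$-limit absorbs Lebesgue-almost every orbit, giving $k=1$.
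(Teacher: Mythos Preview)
Your sketch for (i) is essentially the standard argument the paper alludes to, and your treatment of (iii) correctly identifies it as a deep structural fact to be imported from the literature (the paper simply cites Theorem~E of van Strien--Vargas \cite{SV}).

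The gap is in (ii). From $f^N(U)\cap U\neq\emptyset$ you cannot in general extract a subinterval $V\subset U$ on which $f^N$ is monotone \emph{and} $f^N(V)\cap V\neq\emptyset$. All you know is that some $x\in U$ has $f^N(x)\in U$; but $x$ and $f^N(x)$ may lie on opposite sides of the critical point $c$, and the maximal monotonicity interval of $f^N$ at $x$ may be far too short to reach $f^N(x)$ (indeed part~(i) tells you that preimages of $\crit$ are dense, so this interval is arbitrarily short once $N$ is large). Even if such a $V$ existed, the fixed point produced by the intermediate value theorem lies in the convex hull of $V\cup f^N(V)$, and $f^N(V)$ can be a long interval extending well outside $U$; shrinking $U$ does not force the periodic point to approach $c$, since $N$ typically grows as $U$ shrinks. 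This is exactly the obstruction that makes closing lemmas on the interval non-trivial: the recurrence of a neighbourhood of a critical turning point can be mediated entirely by the fold at $c$, and naive monotone-branch arguments do not close the orbit. The paper handles (ii) by invoking Young's closing lemma \cite{Yo}, which supplies precisely this missing step via a distortion-controlled pullback argument; you should either cite that result or reproduce its mechanism rather than the direct IVT approach.
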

The proof of item $(i)$ follows from standard arguments. For item
$(ii)$, see
\cite{Yo}. The proof of item $(iii)$ is contained in the proof of
Theorem E of \cite{SV}. 

On our Theorem \ref{PrincipalB} we adapt some tools used on 
one dimensional dynamics: given a smooth map $f:I_0\ra I_0$ and $x\in
I_0$, for every $n\in\N$, let $T_n(x)$ be, the maximal interval
containing $x$ where $f^n$ is a diffeomorphism. Let $r_n(x)$ be the
length of the smallest component  of $f^n(T_n(x))\setminus f^n(x)$.
Koebe Principle guarantees distortion bounds in the orbit of a point
$x$, if the respective $r_n(x)$ are not too small. Of course, a lower
bound on $r_n(x)$ implies that the images of the monotonicity
intervals are not too small. This gives some idea of the importance of
the result of Theorem \ref{PrincipalB}.

\end{subsection}

\begin{subsection}{Partial hyperbolicity, slow recurrence}
\label{Preliminariestwodim}
We call a $C^1$ mapping $\fhi:M\ra M$ \emph{partially hyperbolic} 
endomorphism if there are constants $0<\constantdomi<1$,  $C>0$ and a
continuous decomposition of the tangent bundle $TM=E^c\oplus E^u$ such
that: 
\begin{enumerate}
\item[(a)] $||D\fhi^n(z) \,v||>C^{-1} \constantdomi^{-n}$, 
for every unit vetor $v\in E^u(z)$.
\item[(b)] $||D\fhi^n(z) \,u||<C \constantdomi^n ||D\fhi^n(z) \,v||$, 
for every pair of unit vectors $u\in E^c(z)$ and $v\in E^u(z)$.
\end{enumerate}
for all $z\in M$ and $n\geq 0$. The subbundle $E^c$ is called 
\emph{central} and the $E^u$ is called \emph{unstable}. Observe that
we do not ask invariance of the subbundles.  For the skew-product maps
that we consider, the central subbundle is given by the vertical
direction. The unstable one is given by the horizontal direction. 
Notice that the partial hyperbolicity property in our skew-product 
context means that for all $(\te, x)\in \toro\times I_0$ and $n\in\N$,
\begin{equation} \label{decomdomin1}
 \frac{\prod_{i=0}^{n-1}  
|\partial_x f(\fhi^i(\te,x))|}{|\partial_\te g^n(\te)|}\leq C
\constantdomi^n .
\end{equation}
Let us remark that in the condition $(F_2)$ of Theorem
\ref{PrincipalA} we may put $\distv(z,\crit)^\ga$ (with $\ga>1$)
instead of $\distv(z,\crit)$, if we had a better domination for
$\fhi$, namely, if for all $(\te,x)\in \toro\times I_0$,
\begin{equation*}
\frac{\prod_{i=0}^{n-1} |\partial_x f(\fhi^i(\te,x))|^\ga}
{|\partial_\te g^n(\te, x)|}\leq C \constantdomi^n.
\end{equation*}
Finally, recall that the condition of slow recurrence to the 
critical set $\crit$ (see \cite[Equation (6)]{ABV}) means that given
$\epsilon>0$, there exists $\delta>0$ such that for Lebesgue almost
every $x\in M$
\begin{equation*}
\slim \frac{1}{n} \sum_{j=0}^{n-1} 
-\log \dist_\delta(\fhi^j(x),\crit)\leq \ep ,
\end{equation*}
where $\dist_\de(\fhi^j(x),\crit)=\dist(\fhi^j(x),\crit)$ if 
$\dist(\fhi^j(x),\crit)<\delta$ and $\dist_\de(\fhi^j(x),\crit)=1$
otherwise.
\end{subsection}
\end{section}

\begin{section}{Compositions of smooth one dimensional maps}
\label{Composition}
Here we prove Theorem \ref{PrincipalB}. In the sequel we 
introduce some definitions and state results whose proofs are left to
the end of the section. Theorem \ref{PrincipalB} follows from these
results. 

\begin{subsection}{Proof of Theorem \ref{PrincipalB}}
We begin by introducing some sets useful for the proof of the
theorem. Recalling the definitions in subsection \ref{statement2}, for
every $n\in\N$ and $\de>0$ we denote by, 
\begin{equation}\label{Antheta}
A_n\left(\{f_k\},\de\right):= \Biggl\{ x\in I_0\:;\: 
\frac{1}{n}\soma r_i(x)<\de^2, \:\:r_n(x)>0\Biggr\},
\end{equation}
and given $\la>0$, we define for $n\in \N$ ,
\begin{equation}\label{Yntheta}
Y_n\left(\{f_k\},\la\right):=\Biggl\{x; \frac{1}{n} 
\log |Df^n(x))|>\la\Biggr\}.
\end{equation}
When it does not lead to confusion, we denote these sets by
$A_n(\de)$ and $Y_n(\la)$. In fact, we will do it in all this section.

It is clear that (\ref{liminfr_i}) holds (for $\constantB=\de^2$) 
for Lebesgue almost every $x\in H$, if $|\cap_{n\geq N} (\complement
A_n(\de)\cap Y_n(\la))\cap H|$ converges to $|H|$, when $N\ra\infty$
(where $|B|$ denotes the Lebesgue measure of $B$ and $\complement B$
denotes the complement set of $B$). We claim that, in effect, this
happens. Indeed, for every $N\in\N$, it holds
\begin{equation*}
H\cap\left(\bigcap_{n\geq N} Y_n(\la)\right) \cap 
\complement\left(\bigcup_{n\geq N} A_n(\de)\cap Y_n(\la)\right)
\:\mbox{\Large $\subset$}\:H\cap\left(\bigcap_{n\geq N} \complement
A_n(\de)\cap Y_n(\la)\right). 
\end{equation*}
Since (\ref{liminf>2la}) holds for all $x\in H$,
$|H\cap\left(\cap_{n\geq N} Y_n(\la)\right) |$ converges to the
Lebesgue measure of $H$. Thus, to prove our claim we just need to
prove that $|\cup_{n\geq N} A_n(\de)\cap Y_n(\la)|$ converges to zero.
For this purpose we will state the following result which is the main
lemma for proving Theorem \ref{PrincipalB}. 

\begin{Le} \label{A_nY_n}
Let $\{f_k\}$ be a $C^1$-uniformly equicontinuous and $C^1$-uniformly 
bounded sequence of smooth maps $f_k:I_0\ra I_0$ for which $p=\sup_k
\#\crit_k<\infty$. Then, given $\la>0$, there exist $\de>0$ such that 
\begin{equation} \label{equationA_nY_n}
\displaystyle |A_n\left(\{f_k\},\de\right)
\cap Y_n\left(\{f_k\},\la\right)|\leq |I_0| \exp (-n\la/2)
\end{equation}
for $n$ big enough. Moreover, $\de$ depends only on $\la$, 
the modulus of continuity (\ref{uniformlyequicontinuous}), the uniform
bound $\Gamma$ in (\ref{uniformlybounded}) and the uniform bound $p$
for the number of critical points.
\end{Le}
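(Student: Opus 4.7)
\textbf{Proof strategy for Lemma \ref{A_nY_n}.}

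The plan is to cover the set $A_n(\{f_k\},\delta)\cap Y_n(\{f_k\},\lambda)$ by intervals on which $f^n$ has bounded distortion and whose $f^n$-images have a definite lower bound, and then let the derivative bound on $Y_n$ do the work. More precisely, for each $x$ in this set I aim to construct an interval $J(x)\subseteq T_n(x)$ containing $x$ such that: (i) $f^n|_{J(x)}$ has distortion bounded by a universal constant $K=K(\delta,\lambda,\Gamma,p)$, and (ii) $|f^n(J(x))|\ge \rho$ for some $\rho=\rho(\delta,\lambda,\Gamma,p)>0$ independent of $n$ and of $x$. Granted such a family, a Vitali-type selection extracts a sub-cover $\{J(x_\alpha)\}$ of bounded multiplicity; combining (i) with $|Df^n(x_\alpha)|>e^{n\lambda}$ gives
\[
|J(x_\alpha)|\le K\,e^{-n\lambda}\,|f^n(J(x_\alpha))|,
\]
and summing over $\alpha$ while using (ii) to control the number of overlapping images yields $|A_n\cap Y_n|\le C(\delta,\lambda,\Gamma,p)\,|I_0|\,e^{-n\lambda}$, which for $n$ sufficiently large is bounded by $|I_0|\,e^{-n\lambda/2}$, as claimed. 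The constant $\delta$ will be fixed in terms of $\lambda,\Gamma,p$ and the modulus of continuity, before $n$ is chosen large.

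The interval $J(x)$ is built by backward induction. I start from a small ball $B(f^n(x),\eta)\subseteq f^n(T_n(x))$ (which makes sense since $r_n(x)>0$) and pull it back successively through $f_{n-1},\dots,f_0$, at each stage $i$ tracking $|f^i(J(x))|$ and arranging it to be a controlled fraction of $r_i(x)$ (so that $f^i(J(x))$ stays well inside $f^i(T_i(x))$). The hypothesis $\frac{1}{n}\sum_{i=1}^n r_i(x)<\delta^2$ is invoked via Chebyshev: the set of indices $i$ with $r_i(x)<\delta$ has cardinality at least $(1-\delta)n$, so on most iterates $f^i(T_i(x))$ is already small and the further shrinking required to fit $f^i(J(x))$ strictly inside it is modest. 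The $C^1$-uniform equicontinuity of $\{f_k\}$ together with the uniform $C^1$-bound $\Gamma$ then converts the smallness of each $|f^i(J(x))|$ into an estimate on the variation of $|Df_i|$ over $f^i(J(x))$; summed over $i$ these variations give the distortion bound (i), and the remaining budget of $\eta$ delivers the lower bound (ii).

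I expect the main obstacle to be the distortion estimate near critical points. Whenever $f^i(x)$ is close to a point of $\crit_i$, $|Df_i|$ degenerates and the multiplicative error in the distortion blows up, while $C^1$-uniform equicontinuity only supplies additive control on $Df_i$. Since no slow recurrence to $\crit=\bigcup_k\crit_k$ is assumed, such close approaches cannot be excluded a priori. The resolution must be to calibrate the pullback so that $f^i(J(x))$ occupies only a definite fraction of the distance from $f^i(x)$ to $\crit_i$, making the multiplicative distortion contribution at step $i$ proportional to $|f^i(J(x))|/\dist(f^i(x),\crit_i)$, while using the uniform bound $p=\sup_k\#\crit_k$ to keep the combinatorial accounting finite. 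The $A_n$-hypothesis then provides the cumulative control that keeps the total distortion and the final image bounded, uniformly in $n$. In spirit this is a ``random'' analogue of the Koebe-type estimate in \cite[Theorem V.3.2]{dMvS}, with $C^1$-uniform equicontinuity and the bound $p$ playing the role that the Schwarzian derivative plays in the deterministic unimodal setting.
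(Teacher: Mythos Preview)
Your strategy differs substantially from the paper's, and as outlined it has a genuine gap at the distortion step.

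The difficulty is exactly the one you flag but do not resolve. With only $C^1$-uniform equicontinuity you have additive control on $Df_i$, and near $\crit_i$ the derivative $|Df_i|$ is arbitrarily small, so additive control does not convert to multiplicative control. Your proposed fix is to keep $|f^i(J(x))|$ a definite fraction of $\dist(f^i(x),\crit_i)$ so that the step-$i$ contribution is of order $|f^i(J(x))|/\dist(f^i(x),\crit_i)$, and then to sum. But the $A_n$-hypothesis $\frac{1}{n}\sum r_i(x)<\delta^2$ gives no control over $\sum_i 1/\dist(f^i(x),\crit_i)$; bounding that sum is precisely a slow-recurrence condition, which the whole point of the paper is to avoid. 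So the cumulative distortion bound (i) cannot be closed with the stated hypotheses. The lower bound (ii) is also in trouble: since you require $J(x)\subseteq T_n(x)$ and only $r_n(x)>0$ is assumed, the seed ball $B(f^n(x),\eta)\subseteq f^n(T_n(x))$ forces $\eta\le r_n(x)$, which may be arbitrarily small, so no uniform $\rho$ is available from that construction.

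The paper bypasses distortion entirely. On each connected component $J''$ of a slightly enlarged set $A_n''(\delta)$ the map $f^n$ is already a diffeomorphism (because $J''\subseteq T_n(x)$ for its points), so from injectivity alone and the pointwise bound $|Df^n|>e^{n\lambda}$ on $Y_n(\lambda)$ one gets $|A_n(\delta)\cap Y_n(\lambda)\cap J''|\le e^{-n\lambda}|I_0|$; no Koebe-type estimate is needed. All the work goes into \emph{counting} the components: Lemma~\ref{lemaprin} shows $\#A_n''(\delta)\le e^{n\lambda/2}$ by coding points via the itinerary $a_i\in\{0,1\}$ (according to whether $r_i(x)\ge\delta$), bounding how components of $C_\delta(a_1,\dots,a_s)$ split at each step (Claims~\ref{3(p+1)timesthelast}--\ref{withsingularity}), and using that points of $Y_n(\lambda)$ visit any fixed neighbourhood of $\crit$ with frequency $<\gamma$ (Lemma~\ref{lemafrequence}); Stirling then bounds the number of admissible itineraries. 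In this scheme the smallness of the $r_i$'s is an asset rather than an obstacle: long runs of $a_i=0$ away from the critical set split only linearly, which is what keeps the component count subexponential relative to $e^{n\lambda}$.
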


\begin{proof}[Proof that Theorem \ref{PrincipalB} follows from 
Lemma \ref{A_nY_n}]
As we have remarked, Lemma \ref{A_nY_n} clearly implies that
\begin{equation*}
\bigcup_{N\in\N} \bigcap_{n\geq N } \complement A_n(\de)\cap Y_n(\la) 
\end{equation*}
has full Lebesgue measure in $H$. Hence, (\ref{liminfr_i}) holds 
for $\constantB=\de^2$, where $\de$ is the constant found on Lemma
\ref{A_nY_n}. This concludes the proof of Theorem \ref{PrincipalB}.
\end{proof}
\end{subsection}

\begin{subsection}{Connected components of the 
set $\mbox{\boldmath $A_n(\de)$ \unboldmath}$}
The proof of Lemma \ref{A_nY_n} relies on bounding the number of 
connected components of the set $A_n(\de)$ whose intersection with
$Y_n(\la)$ is non-empty.
We define a family of sets related to these components. It seems
easier to deal and to count the elements of this family than the
components of $A_n(\de)$, and it will be enough for our purposes. 

For $\de>0$, $a_i\in \{0,1\}$ for $i=1, 2, \ldots, n$,
\begin{equation*} 
C_{\de}(a_1,a_2,\ldots,a_n):=\{x\in  I_0\:;
\: r_i(x)\geq\de\:\: \text{if}\:\: a_i=1,  
\:\,0<r_i(x)<\de \:\:\text{if}\:\: a_i=0 \}
\end{equation*}

Note that every connected component of
$C_\de(a_1,\ldots,a_s,a_{s+1})$ is contained in a connected component
of $C_\de(a_1,\ldots,a_s)$. Moreover, every connected component of 
$C_\de(a_1,\ldots,a_s)$ is a union of connected components (with its
boundaries) of $C_\de(a_1,\ldots,a_s, a_{s+1})$. Also note (recall the
definition of $T_i(x)$ in subsection \ref{statement2}) that for every
connected component $I$ of $C_\de(a_1,\ldots,a_s)$, we have $I\subset
T_{s}(x)$ for all $x\in I$.

Given $x\in I_0$ and $n\in\N$, if $f^i(x)\notin \crit_{i}$ for $0\leq
i<n$, we can associate to it a sequence $\{a_i(x)\}_{i=1}^n$,
according to the last definition, in a natural way: 
\begin{equation*}
a_i(x)=
\begin{cases}
0 & \text{ if }\: 0<r_i(x)<\de \\
1 & \text{ if }\:\:\: r_i(x)\geq \de 
\end{cases}.
\end{equation*}
For this sequence the inequality 
$(a_1(x)+\ldots+a_{n}(x))\de\leq\soma r_i(x)$ is satisfied. In
particular, for every $x\in A_n(\de)$, the associated sequence
$\{a_i(x)\}_{i=1}^{n}$ is such that $a_1(x)+\ldots+a_{n}(x)< \de n$.
Therefore, if we define
\begin{equation*}
C_n(\de):=\bigcup_{a_1+\ldots+a_n<\de n} C_{\de}(a_1,\ldots,a_n),
\end{equation*}
we conclude that $A_n(\de)\subset C_n(\de)$.

But in fact, we are interested on the connected components of 
$A_n(\de)$ which intersect the set $Y_n(\la)$. We will say that a
connected component $J$ of $A_n(\de)$ is a connected component of
$A\p_n(\de)$ if $J\cap Y_n(\la)\neq\emptyset$. Analogously we will say
that a connected component $I$ of $C_{\de}\upla{n}$ is a connected
component of $C\p_{\de}\upla{n}$ if $I\cap Y_n(\la)\neq\emptyset$. 

We can associate to each connected component of $A_n\p(\de)$, 
a connected component of $C\p_{\de}\upla{n}$, where $\sumaupla{n}<\de
n$: for a connected component $J$ of $A_n\p(\de)$, there exist
$a_1,\ldots, a_n$ (such that $\sumaupla{n}<\de n$) and a connected
component $I$ of $C\p_{\de}\upla{n}$, for which $J\cap I\neq
\emptyset$. Indeed, we can consider $a_i=a_i(x)$ ($1\leq i\leq n$) for
$x\in J\cap Y_n(\la)$, and $I$ the connected component of
$C\p_{\de}\upla{n}$ which contains $x$. Thus, we associate to $J$ the
component $I$. 

We would like to bound the number of connected components of 
$A_n\p(\de)$ by the number of connected components of
$C\p_{\de}\upla{n}$, varying $a_1,\ldots, a_n$ such that
$\sumaupla{n}<\de n$. But every connected component of
$C_{\de}\p\upla{n}$ (with $\sumaupla{n}<\de n$) could intersect more
than one connected component of $A_n\p(\de)$. By this reason we define
the following set:
\begin{equation*}
A_n\pcuad(\de):= \bigcup_{J\p\in A_n\p(\de)} J\pcuad,
\end{equation*}
where 
\begin{equation*}
J\pcuad :=J\p \quad\cup \bigcup_{a_1+\ldots+a_n<\de
n}\{\text{connected components of } C_\de(a_1,\ldots,a_n) \text{ which
intersect } J\p\cap Y_n(\la)  \} 
\end{equation*}
Obviously, a connected component of $A_n\pcuad(\de)$ could contain 
more than one connected component of $A_n\p(\de)$. However, the
restriction of $f^n$ to every connected component of $A_n\pcuad(\de)$
is a diffeomorphism. Using this fact, we will show in the proof of
Lemma \ref{A_nY_n} that in order to obtain (\ref{equationA_nY_n}), it
is enough to estimate the number of connected components of
$A_n\pcuad(\de)$.

Since every component of $A_n\pcuad(\de)$ intersect at least 
one component of $C_\de\p(a_1,\ldots,a_n)$, we conclude that
\begin{equation}\label{A_n<C}
\#\: A\pcuad_n(\de)\leq \sum \#\:  C\p_{\de}(a_1,\ldots,a_n)
\end{equation}
where the sum is over all $a_1,\ldots, a_n$ such that 
$a_1+\ldots+a_n <\de n$, and $\#X$ denotes the number of connected
components of $X$.

As we have said, Lemma \ref{A_nY_n} is a consequence of the following
result, which gives an estimate of the number of connected components
of $A_n\pcuad(\de)$ .

\begin{Le} \label{lemaprin}
Given $\la>0$, there exists $\de>0$ such that the number of 
connected components of $A_n\pcuad(\de)$ is less than $\exp (n\la/2)$.
Moreover, $\de$ depends only on $\la$, the modulus of continuity
(\ref{uniformlyequicontinuous}), the uniform bound $\Gamma$ in
(\ref{uniformlybounded}) and the uniform bound $p$ for the number of
critical points.
\end{Le}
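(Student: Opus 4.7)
The plan is to start from the inequality \eqref{A_n<C} already established in the text, namely
$$
\#\, A_n\pcuad(\de)\;\le\;\sum_{a_1+\cdots+a_n<\de n}\#\, C_\de\p(a_1,\ldots,a_n),
$$
and to bound separately (i) the number of admissible $\{0,1\}$-patterns $(a_1,\ldots,a_n)$ with $a_1+\cdots+a_n<\de n$, and (ii) the number of components $\#\, C_\de\p(a_1,\ldots,a_n)$ for each fixed such pattern.

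For (i) a standard binomial/Stirling estimate shows the count is at most $\exp(n\eta(\de))$ with $\eta(\de)\to 0$ as $\de\to 0^+$ (one may take the binary entropy $\eta(\de)=-\de\log\de-(1-\de)\log(1-\de)$). Thus by choosing $\de$ small, depending only on $\la$, this factor can be absorbed into $\exp(n\la/4)$.

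For (ii) I would argue by induction on $s$. Fix a connected component $I$ of $C_\de(a_1,\ldots,a_s)$. Since $I\subset T_s(x)$ for all $x\in I$, the map $f^s|_I$ is a diffeomorphism, and the at most $p$ critical points of $f_s$ inside $f^s(I)$ partition $I$ into at most $p+1$ monotonicity intervals of $f^{s+1}$. On each such monotonicity interval $M$, the function $r_{s+1}$ is the minimum of the two monotone functions $x\mapsto|f^{s+1}(L_{s+1}(x))|$ and $x\mapsto|f^{s+1}(R_{s+1}(x))|$, hence continuous and unimodal on $M$, vanishing at the endpoints and attaining its maximum $|f^{s+1}(M)|/2$ in the interior. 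Therefore $\{r_{s+1}\ge\de\}\cap M$ is at most a single subinterval (possibly empty, exactly when $|f^{s+1}(M)|<2\de$), while $\{0<r_{s+1}<\de\}\cap M$ consists of at most two subintervals, one near each endpoint of $M$. This yields the per-step bound: within each parent component $I$, there are at most $p+1$ sub-components if $a_{s+1}=1$, and at most $2(p+1)$ sub-components if $a_{s+1}=0$.

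The main obstacle is that the naive product of these per-step factors gives only $(2(p+1))^n$, which is too large to beat $\exp(n\la/2)$ when $\la$ is small compared to $\log(2(p+1))$. To sharpen the estimate one must exploit the geometric content of $a_{s+1}=0$: any subcomponent $J\subset M$ with $r_{s+1}<\de$ has $f^{s+1}(J)$ contained in a $\de$-neighborhood of one of the endpoints of $f^{s+1}(M)$, so $|f^{s+1}(J)|\le\de$; and conversely $M$ can host a subcomponent with $r_{s+1}\ge\de$ only if $|f^{s+1}(M)|\ge 2\de$. Using the $C^1$-uniform equicontinuity \eqref{uniformlyequicontinuous} and uniform boundedness \eqref{uniformlybounded} of $\{f_k\}$, together with the fact that the images $f^{s+1}(I)\subset I_0$ are uniformly bounded, one can control how many of the at most $p+1$ monotonicity intervals of $f^{s+1}$ inside $I$ have macroscopic image ($\ge 2\de$); only those contribute the factor $2$ rather than $1$. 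A careful bookkeeping of this contribution, uniform in the pattern $(a_1,\ldots,a_n)$, should give
$$
\#\, C_\de\p(a_1,\ldots,a_n)\;\le\;\exp(n\,\mu(\de))
$$
for some function $\mu(\de)\to 0$ as $\de\to 0^+$ depending only on $\la$, the modulus \eqref{uniformlyequicontinuous}, $\Gamma$ and $p$. Combining (i) and (ii) and choosing $\de$ small enough that $\eta(\de)+\mu(\de)<\la/2$ then gives $\#\, A_n\pcuad(\de)\le\exp(n\la/2)$, with $\de$ depending only on the quantities indicated in the statement, as required.
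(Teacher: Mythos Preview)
Your step (i) and your crude per-step bound in (ii) are fine and match the paper (the paper obtains $3(p+1)$ instead of $2(p+1)$, but this is the same order). You also correctly identify the obstacle: the naive product $(2(p+1))^n$ is far too large.

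The gap is in your proposed sharpening. Your argument never uses the fact that you are counting components of $C_\de\p(a_1,\ldots,a_n)$, i.e.\ only those meeting $Y_n(\la)$; you are effectively trying to bound $\#C_\de(a_1,\ldots,a_n)$ itself. That cannot work: for a generic sequence $\{f_k\}$ with $p$ critical points each, the number of monotonicity intervals of $f^n$ really does grow like $(p+1)^n$, and one can arrange that most of them satisfy $r_i<\de$ for all $i$. Your ``macroscopic image'' bookkeeping only distinguishes, within a fixed parent, which monotonicity subintervals $M$ contribute two $0$-children rather than one; it does nothing to kill the dominant factor $(p+1)$ coming from the splitting by critical points of $f_s$ in $f^s(I)$. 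There is no reason this count should tend to $0$ with~$\de$.

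The paper's mechanism is different and uses $Y_n(\la)$ essentially. By Lemma~\ref{lemafrequence}, if $x\in Y_n(\la)$ then $f^j(x)\in V_\ep\crit$ for at most $\ga n$ indices $j$, with $\ga$ as small as one likes (at the cost of shrinking $\ep$). One then decomposes $Y_n(\la)$ according to the set $\{t_1,\ldots,t_m\}$ of visit times, $m<\ga n$. The key geometric point is this: if $I$ is a component of $C_\de(a_1,\ldots,a_s,0)$ then $|f^{s+1}(I)|\le 2\de$, hence by Lemma~\ref{uniformcontinuity} one has $|f^{s+j}(I)|<\ep$ for all $1\le j\le l$; so if none of $s+1,\ldots,s+l$ is a visit time, $f^{s+j}(I)$ meets no critical point and there is \emph{no} $(p+1)$-splitting during that stretch. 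Claim~\ref{withoutsingularity} then shows that along a run of $i$ consecutive $0$'s with no critical visit the number of components grows only like $i+1$, not exponentially. Writing $a_1\ldots a_n$ as blocks of ``bad'' symbols (either $a_j=1$ or $j$ is a visit time; at most $(\de+\ga)n$ of them, each costing a factor $3(p+1)$) separated by $0$-runs handled polynomially, one gets a total bound $\exp(n\psi(l,\ga,\de))$ with $\psi\to 0$ by choosing first $l$ large, then $\ga$ and $\de$ small. That use of the expansion hypothesis, through the rarity of critical visits, is the missing ingredient in your argument.
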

\end{subsection}

\begin{subsection}{Consequences of expansion and continuity}
For the proof of Lemma \ref{lemaprin} we will use several results 
that we state now. First we give some notations. Given $\ep>0$, for
every $k\geq 0$, we call $V_{\ep}\crit_k$ a neighborhood of $\crit_k$
defined as the union of all $B(x,\ep)$ (ball centered in $x$ of ratio
$\ep$) varying $x\in\crit_k$. In order to simplify the notation we say
that $f^j(x)\in V_{\ep}\crit$ if $f^j(x)\in V_{\ep}\crit_{j}$ for any
$j\in\N$. The next lemma asserts that for points in $Y_n(\la)$, the
frequency of visits to the neighborhood $V_{\ep}\crit$ can be made
arbitrarily small, if $\ep$ is chosen small enough. 
\begin{Le} \label{lemafrequence}
Given $\gamma>0$, there exists $\ep>0$, such that for $x\in Y_n(\la)$,
\begin{equation*} \label{frequence}
\displaystyle
\frac{1}{n}\sum_{j=0}^{n-1}\bigchi_{V_{\ep}\crit}(f^j(x))<\gamma .
\end{equation*}
Moreover, $\ep$ does not depend on $n$, but it depends on $\la$, 
on the modulus of continuity of $\{f_k\}$ and on the uniform bound of
$\{Df_k\}$.
\end{Le}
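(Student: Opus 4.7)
The strategy is entirely quantitative: a point $x\in Y_n(\la)$ accumulates derivative of at least $e^{n\la}$ along its orbit, but whenever the orbit enters $V_\ep\crit$ the corresponding factor $|Df_j(f^j(x))|$ is forced to be very small, so the orbit cannot spend too large a fraction of time in $V_\ep\crit$ without violating the growth $|Df^n(x)|>e^{n\la}$.

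First I would translate $C^1$-uniform equicontinuity into a uniform derivative bound near the critical sets. Since each $c\in\crit_k$ satisfies $Df_k(c)=0$, the condition (\ref{uniformlyequicontinuous}) yields, for every $\eta>0$, an $\ep>0$ (depending only on $\eta$ and the modulus of continuity) such that
\begin{equation*}
y\in V_\ep\crit_k \quad\Longrightarrow\quad |Df_k(y)|<\eta \qquad \text{for all } k\geq 0.
\end{equation*}
Outside $V_\ep\crit$ I use only the uniform bound (\ref{uniformlybounded}), namely $|Df_k|\leq \Gamma$.

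Next, for $x\in Y_n(\la)$, let $N_n(x)=\#\{0\leq j<n : f^j(x)\in V_\ep\crit\}$. Applying the chain rule to $|Df^n(x)|=\prod_{j=0}^{n-1}|Df_j(f^j(x))|$ and taking logarithms,
\begin{equation*}
n\la < \log|Df^n(x)| = \sum_{j=0}^{n-1}\log|Df_j(f^j(x))| \leq N_n(x)\log\eta + (n-N_n(x))\log\Gamma.
\end{equation*}
Rearranging and using that $\log\eta<\log\Gamma$ for $\eta$ small,
\begin{equation*}
\frac{N_n(x)}{n} \leq \frac{\log\Gamma-\la}{\log\Gamma-\log\eta}.
\end{equation*}
Note that since $|Df^n(x)|\leq \Gamma^n$ yet $|Df^n(x)|>e^{n\la}$ on $Y_n(\la)$, necessarily $\log\Gamma\geq \la$, so the right-hand side is non-negative and, crucially, tends to $0$ as $\eta\to 0$.

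Finally, given $\gamma>0$, I would choose $\eta>0$ small enough that $(\log\Gamma-\la)/(\log\Gamma-\log\eta)<\gamma$, and then take the corresponding $\ep>0$ furnished by the equicontinuity step. The resulting $\ep$ depends only on $\la$, $\Gamma$ and the modulus of continuity of $\{f_k\}$, as required. No step looks to present serious difficulty: the whole argument is essentially a one-line pigeonhole once the equicontinuity is used to kill the derivative near $\crit$. The only point one must be careful about is that $\Gamma$ may a priori be less than $1$, so I would (without loss of generality) replace $\Gamma$ by $\max(\Gamma,e^\la)$ to make the quotient above well-defined and positive.
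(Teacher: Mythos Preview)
Your proof is correct and follows essentially the same approach as the paper: use $C^1$-uniform equicontinuity to force $|Df_k|<\eta$ on $V_\ep\crit$, the uniform bound $\Gamma$ elsewhere, and then the chain rule together with $\log|Df^n(x)|>n\la$ to bound the frequency of visits. The paper leaves the final step qualitative (``since $\log\zeta\to-\infty$ there must exist $\ep$ as stated'') whereas you carry out the explicit rearrangement, but the arguments are otherwise identical.
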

\begin{proof} 
Using the fact that the sequence $\{f_k\}_{k\geq 0}$ is
$C^1$-uniformly equicontinuous, we conclude that given $\zeta>0$,
there exists $\ep=\ep(\zeta)$ such that
\begin{equation} \label{byequicontinuity}
|x-\crit_k|<\ep \quad \text{  implies  }
\quad |Df_k(x)|<\zeta \quad \text{for all } \: k\geq 0 .
\end{equation}
On the other hand, since $\{f_k\}_{k\geq 0}$ is $C^1$-uniformly 
bounded, $|Df_k(x)|\leq \Gamma$ for all $k\geq 0$ and $x\in I_0$.
Thus, $\log |Df_j(f^j(x))| <\log \zeta$ if $f^j(x)\in V_{\ep}\crit$
and $\log |Df_j(f^j(x))| \leq\log \Gamma$ otherwise.

Since $\la n<\sum_{j=0}^{n-1}\log |Df_{j}(f^{j}(x))|$ for 
$x\in Y_n(\la)$ and $\log \zeta \ra-\infty$ when $\zeta\ra 0$, there
must exist $\ep$ as stated.
\end{proof}

\begin{Cor}
Assume that for Lebesgue almost every $x\in I_0$
\begin{equation*}
\ilim \frac{1}{n} \log |Df^n(x))|\geq \la>0 .
\end{equation*}
Then, given $\gamma>0$, there exists $\ep>0$, such that for
Lebesgue almost every $x\in M$,
\begin{equation*}
\slim\frac{1}{n}\sum_{j=0}^{n-1}\bigchi_{V_{\ep}\crit}(f^j(x))<\gamma
.
\end{equation*}
\end{Cor}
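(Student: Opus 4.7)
The plan is to reduce the corollary directly to Lemma \ref{lemafrequence} by using the $\liminf$ hypothesis to guarantee that Lebesgue-almost every $x$ eventually lands in $Y_n(\lambda')$ for some $\lambda'\in(0,\lambda)$. Lemma \ref{lemafrequence} produces, for any fixed positive exponent and any fixed $\gamma$, a single $\epsilon>0$ whose associated neighborhood $V_\epsilon\crit$ has small visit-frequency \emph{uniformly} on $Y_n(\lambda')$. Combining these two ingredients turns the pointwise per-$n$ bound into the desired $\limsup$ bound along the full orbit.

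Concretely, fix $\gamma>0$ and choose any $\lambda'\in(0,\lambda)$, e.g.\ $\lambda'=\lambda/2$. Apply Lemma \ref{lemafrequence} with the constants $\lambda'$ and $\gamma/2$ in place of $\lambda$ and $\gamma$; this produces $\epsilon=\epsilon(\lambda',\gamma/2)>0$, depending only on $\lambda$, $\gamma$, the modulus of equicontinuity of $\{f_k\}$, and the uniform bound $\Gamma$, such that
\begin{equation*}
x\in Y_n\bigl(\{f_k\},\lambda'\bigr)\quad\Longrightarrow\quad
\frac{1}{n}\sum_{j=0}^{n-1}\bigchi_{V_\epsilon\crit}\bigl(f^j(x)\bigr)<\gamma/2.
\end{equation*}
On the other hand, the hypothesis $\ilim \frac{1}{n}\log|Df^n(x)|\geq\lambda$ together with $\lambda'<\lambda$ implies that for Lebesgue-almost every $x\in I_0$ there exists $N(x)\in\N$ such that $\frac{1}{n}\log|Df^n(x)|>\lambda'$ for all $n\geq N(x)$, i.e.\ $x\in Y_n(\{f_k\},\lambda')$ for all such $n$.

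Combining the two observations, for almost every $x$ and every $n\geq N(x)$ the visit frequency is bounded above by $\gamma/2$, so
\begin{equation*}
\slim\frac{1}{n}\sum_{j=0}^{n-1}\bigchi_{V_\epsilon\crit}\bigl(f^j(x)\bigr)
\leq \gamma/2 <\gamma ,
\end{equation*}
which is the required conclusion. No substantive obstacle is expected: the only minor point is the standard buffer obtained by taking $\lambda'<\lambda$ and $\gamma/2<\gamma$, which converts the strict per-$n$ inequality of Lemma \ref{lemafrequence} into a strict $\limsup$ inequality for almost every point.
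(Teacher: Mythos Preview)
Your proof is correct and follows exactly the approach implicit in the paper: the corollary is stated immediately after Lemma~\ref{lemafrequence} with no separate proof, since it is a direct consequence of that lemma together with the observation that the $\liminf$ hypothesis forces almost every $x$ to lie in $Y_n(\lambda')$ for all large $n$ once $\lambda'<\lambda$. Your buffer choices $\lambda'=\lambda/2$ and $\gamma/2$ are exactly the standard device needed to turn the per-$n$ strict inequality into a strict $\limsup$ inequality.
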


Let us denote for $i, j\in\N$, and $x\in I_0$,
\begin{equation*}
f_i^j(x)=f_{i+j-1}\circ \ldots f_{i+1}\circ f_{i}(x)
\end{equation*}
and $f_0^0(x)=x$. Notice that $f_0^j(x)=f^j(x)$ for $j\geq 0$ 
and $x\in I_0$. Again by the $C^1$-uniform equicontinuity of the
sequence $\{f_k\}$, we have the following property. 
\begin{Le} \label{uniformcontinuity}
Given $\ep>0$ and $l\in \N$, there exists $\de=\de(l)$ such that
\begin{equation} \label{equationuniformcontinuity} 
|x-y|\leq 2\de \quad \text{ implies } \quad |f_i^j(x)-f_i^j(y)|<\ep
\end{equation}
for all $i\geq 0$  and $0\leq j\leq l$. Moreover, $\de$ just depends 
(on $l,\ep$ and) on the modulus of continuity of $\{f_k\}$.
\end{Le}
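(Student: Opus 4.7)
The plan is to establish the lemma by a straightforward backward induction on $j$, invoking the uniform modulus of continuity furnished by the $C^1$-uniform equicontinuity hypothesis on the sequence $\{f_k\}$. The essential point is that this modulus is uniform in the index $k$, which allows an iteration of length $l$ to produce a single constant $\de$ that depends only on $l$, $\ep$ and the modulus, but not on the starting index $i$.

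First I would extract from the definition of $C^1$-uniform equicontinuity a function $\eta:(0,\infty)\to(0,\infty)$ such that, for every $k\geq 0$ and every $u,v\in I_0$,
\begin{equation*}
|u-v|<\eta(\zeta)\quad\Longrightarrow\quad |f_k(u)-f_k(v)|<\zeta.
\end{equation*}
Next, I would build a finite non-increasing sequence $\ep=\ep_l\geq \ep_{l-1}\geq\cdots\geq \ep_0>0$ by setting
\begin{equation*}
\ep_{j-1}\;=\;\min\{\eta(\ep_j),\,\ep_j\}\qquad\text{for }j=l,l-1,\ldots,1,
\end{equation*}
and finally define $\de:=\ep_0/3$, so that $|x-y|\leq 2\de$ forces $|x-y|<\ep_0$.

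With these choices, I would prove by induction on $j\in\{0,1,\ldots,l\}$ that, for every $i\geq 0$,
\begin{equation*}
|x-y|<\ep_0\quad\Longrightarrow\quad |f_i^j(x)-f_i^j(y)|<\ep_j.
\end{equation*}
The base case $j=0$ is immediate since $f_i^0$ is the identity. For the inductive step, $f_i^{j+1}=f_{i+j}\circ f_i^j$, and the inductive hypothesis yields $|f_i^j(x)-f_i^j(y)|<\ep_j\leq \eta(\ep_{j+1})$; applying the defining property of $\eta$ to the map $f_{i+j}$ (which is legitimate by the $k$-uniformity) gives $|f_i^{j+1}(x)-f_i^{j+1}(y)|<\ep_{j+1}$. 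In particular, every value $\ep_j$ is bounded above by $\ep_l=\ep$, so the conclusion \eqref{equationuniformcontinuity} holds.

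There is no substantive obstacle in this argument; the only thing to watch is that each quantity introduced is genuinely independent of $i$, which is guaranteed by the uniformity in $k$ of the modulus $\eta$. It is worth noting that the proof uses neither the $C^1$-uniform bound $\Gamma$ nor the bound $p$ on the number of critical points, and $\de$ depends only on $l$, $\ep$ and the modulus of continuity of $\{f_k\}$, as claimed.
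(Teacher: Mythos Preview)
Your proof is correct and is precisely the standard argument one would write to justify this lemma. The paper itself does not supply a proof: it simply introduces the statement with the sentence ``Again by the $C^1$-uniform equicontinuity of the sequence $\{f_k\}$, we have the following property'', treating it as immediate. Your write-up therefore fills in what the paper leaves implicit, and the dependence of $\de$ only on $l$, $\ep$ and the modulus is transparent from your construction of the chain $\ep_l\geq\cdots\geq\ep_0$.
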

\begin{Rem}
When $l\ra\infty$ then $\de(l)\ra 0$. Observe that we also 
have: given $\ep>0$ and $\de>0$, there exists $l=l(\de)\in\N$ such
that (\ref{equationuniformcontinuity}) holds for $0\leq j\leq l$.
\end{Rem}
From now on, $\# \{I\subset C_{\de}(a_1,\ldots,a_n); I\text{ satisfies
the property P} \}$ denotes the number of connected components of
$C_{\de}(a_1,\ldots,a_n)$ which satisfy the property P.

In order to count the components whose intersection with $Y_n(\la)$ 
is non-empty, let us decompose this set in a convenient way. Given
$\ep>0$, $m\leq n$, $\{t_1,\ldots,t_m\}\subset \{0,1,\ldots,n-1\}$, we
define
\begin{equation*}
Y_{n,\ep}(t_1,\ldots,t_m)=\{x\in Y_n(\la);
\:f^j(x)\in V_{\ep}\crit\text{ if and only if }
j\in\{t_1,\ldots,t_m\}\}
\end{equation*} 

By Lemma \ref{lemafrequence} we conclude that given $\ga>0$, 
there exists $\ep>0$ such that
\begin{equation} \label{decompositionYn}
Y_n(\la)=\cup_{m=0}^{\ga n}\cup_{t_1,\ldots, t_m} 
Y_{n,\ep}(t_1,\ldots,t_m)
\end{equation} 
where the second union is over all subsets $\{t_1, \ldots, t_m\}$ 
of $\{0,1,\ldots, n-1\}$. This together with (\ref{A_n<C}) yields,
\begin{equation} \label{A_n<Cfinal}
\# A_n\pcuad(\de)\leq \sum_{a_1,\ldots,a_n} \sum_{t_1,\ldots, t_m} 
\# \{I\subset C_{\de}(a_1,\ldots,a_n); I\cap
Y_{n,\ep}(t_1,\ldots,t_m)\neq\emptyset\}
\end{equation}
where the first sum is over all $a_1,\ldots, a_n$ such that 
$a_1+\ldots+a_n <\de n$ and the second one is over all subsets $\{t_1,
\ldots, t_m\}\subset\{0,1,\ldots, n-1\}$ with $m<\ga n$.
\end{subsection}

\begin{subsection}{Connected components of 
$\mbox{\boldmath $C_\de(a_1,\ldots, a_s)$ \unboldmath}$}
To prove Lemma \ref{lemaprin} we just need to bound the double sum 
in (\ref{A_n<Cfinal}). For this we will show some claims related to
the number of connected components of the sets $C_\de(a_1,\ldots,
a_n)$. Recall that $p$ is the maximum number of elements in any
$\crit_k$ (for $k\geq 0$). Given $I\subset I_0$ and $s\in\N$, we say
$f^s(I)\cap\crit=\emptyset$ (resp. $\neq\emptyset$) if $f^s(I)\cap
\crit_{s}=\emptyset$ (resp. $\neq\emptyset$).
\begin{Cla}\label{3(p+1)timesthelast}
For any $a_1,a_2,\ldots, a_s$ with $a_j\in\{0,1\}$ for all $j$,
\begin{equation*}
\#C_{\de}(a_1,\ldots,a_s,0)+\#C_{\de}(a_1,\ldots,a_s,1)
\leq 3(p+1)\#C_{\de}(a_1,\ldots,a_s)
\end{equation*}
\end{Cla}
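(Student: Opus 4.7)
The plan is to fix a single connected component $J$ of $C_\de(a_1,\ldots,a_s)$ and show that at most $3(p+1)$ connected components of $C_\de(a_1,\ldots,a_s,0)\cup C_\de(a_1,\ldots,a_s,1)$ lie inside $J$; summing over all such $J$ then gives the claim. By the remark preceding the statement, $J\subset T_s(x)$ for every $x\in J$, so $f^s$ restricts to a diffeomorphism on $J$. Since $\#\crit_s\le p$, the image $f^s(J)$ meets $\crit_s$ in at most $p$ points; pulling these back by $f^s|_J$ splits $J$ into at most $p+1$ open subintervals $J_1,\ldots,J_k$, on each of which $f^{s+1}=f_s\circ f^s$ is a diffeomorphism.

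The key observation is that for any $x$ in the interior of a given $J_i$, the maximal interval $T_{s+1}(x)$ equals a single fixed interval $\widetilde{J}_i\supseteq J_i$, independent of $x$. Indeed, $T_s(\cdot)$ is constant on each connected component of the open set obtained after removing the preimages of $\crit_j$ for $j<s$, hence constant on $J$, and the connected component of $f^s(T_s)\setminus \crit_s$ containing $f^s(x)$ is constant as $x$ ranges over $J_i$. Consequently $L_{s+1}(x)$ and $R_{s+1}(x)$ are just the two components of $\widetilde{J}_i\setminus\{x\}$, and the lengths $\rho_L(x)=|f^{s+1}(L_{s+1}(x))|$ and $\rho_R(x)=|f^{s+1}(R_{s+1}(x))|$ depend continuously and monotonically on $x$ in $J_i^\circ$ (one increasing, the other decreasing), since $f^{s+1}$ is a diffeomorphism on $\widetilde{J}_i$. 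Thus $r_{s+1}(x)=\min\{\rho_L(x),\rho_R(x)\}$ is continuous on $J_i^\circ$ with a unique interior maximum.

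It follows at once that the superlevel set $\{x\in J_i^\circ:\ r_{s+1}(x)\ge\de\}$ is an interval (possibly empty), while its complement in $J_i^\circ$ consists of at most two intervals on which $0<r_{s+1}<\de$. Hence each $J_i$ contributes at most three connected components to $C_\de(a_1,\ldots,a_s,0)\cup C_\de(a_1,\ldots,a_s,1)$. The points separating adjacent $J_i$'s inside $J$ are preimages of $\crit_s$, where $r_{s+1}$ vanishes, so they lie in no $C_\de(\cdot)$ and are harmless. Summing at most three components over the at most $p+1$ pieces $J_i$ gives the bound $3(p+1)$ per $J$, and summing over the components of $C_\de(a_1,\ldots,a_s)$ completes the argument. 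The only slightly delicate step is verifying the constancy of $\widetilde{J}_i$ as $x$ varies over $J_i^\circ$, since only then do the monotonicity of $\rho_L,\rho_R$ and the resulting three-interval count become automatic.
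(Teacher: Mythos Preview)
Your proof is correct and follows essentially the same approach as the paper: fix a component $J$ of $C_\de(a_1,\ldots,a_s)$, split it into at most $p+1$ pieces along the preimages of $\crit_s$ under $f^s|_J$, and show each piece carries at most three components of $C_\de(a_1,\ldots,a_s,0)\cup C_\de(a_1,\ldots,a_s,1)$. Your justification of the ``at most three'' step via the monotonicity of $\rho_L,\rho_R$ and the resulting single-peak shape of $r_{s+1}$ is a bit more explicit than the paper's, which instead observes combinatorially that a $C_\de(\ldots,0)$-component cannot be flanked on both sides by $C_\de(\ldots,1)$-components; the content is the same.
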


\begin{Cla}\label{withoutsingularity}
Let $s, n\in\N$ and $J$ be a component of
$C_{\de}(a_1,\ldots,a_s,0)$. 
If $f^{s+i}(J)\cap \crit=\emptyset$ for $1\leq i\leq n$, then
\begin{equation*}
\#\{I\subseteq C_{\de}(a_1,\ldots,a_s, 0^{i+1}), I\subseteq J\}\leq
i+1.
\end{equation*}
for $1\leq i\leq n$, where $0^{i+1}$ means that the last $i+1$ terms 
are equal to 0.
\end{Cla}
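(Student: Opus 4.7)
The plan is to induct on $i$. For $0 \le k \le n$ set $T^{(k)} := T_{s+1+k}(y)$ with $y \in J$; the hypothesis $f^{s+j}(J) \cap \crit = \emptyset$ for $1 \le j \le n$, combined with $J \subseteq C_\de(a_1,\ldots,a_s,0)$ (which forces $f^j(y) \notin \crit_j$ for $0 \le j \le s$ and $y \in J$), implies that $J$ itself enjoys the no-critical-hit property up to time $s+k$. By maximality, $T^{(k)}$ is then independent of $y \in J$, and one obtains a nested chain $J \subseteq T^{(n)} \subseteq \cdots \subseteq T^{(0)}$. On each $T^{(k)}$, the iterate $f^{s+1+k}$ is a diffeomorphism onto $U_k := f^{s+1+k}(T^{(k)})$, so the condition $0 < r_{s+1+k}(y) < \de$ is equivalent to $f^{s+1+k}(y)$ belonging to the set $V_k \subseteq U_k$ consisting of the two open $\de$-neighborhoods of the endpoints of $U_k$.

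Writing $N(i) := \#\{I \subseteq C_\de(a_1,\ldots,a_s,0^{i+1})\,;\, I \subseteq J\}$, the base case $N(0) = 1$ is trivial, as $J$ itself is the unique such component. For the inductive step, fix a component $I \subseteq J$ of $C_\de(a_1,\ldots,a_s,0^{k+1})$; since $I \subseteq T^{(k+1)}$, its refinement into components of $C_\de(a_1,\ldots,a_s,0^{k+2})$ is exactly the collection of connected components of $I \cap (f^{s+k+2})^{-1}(V_{k+1})$. Because $V_{k+1}$ is a union of at most two intervals and $f^{s+k+2}|_I$ is a homeomorphism onto an interval, each $I$ contributes at most two new components. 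The crucial observation is that at most one of the $N(k)$ components can contribute exactly two: if $I$ contributes two, then $f^{s+k+2}(I)$ is a connected sub-interval of $U_{k+1}$ meeting both disjoint $\de$-neighborhoods of the endpoints of $U_{k+1}$, so it must contain the entire gap between them; two distinct $I_1 \ne I_2$ with this spanning property would have images sharing this non-empty gap, contradicting the injectivity of $f^{s+k+2}|_{T^{(k+1)}}$.

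Combining, $N(k+1) \le N(k) + 1 \le k+2$, which closes the induction and delivers the stated bound $N(i) \le i+1$. The degenerate case $|U_{k+1}| \le 2\de$ causes no trouble, since then the two $\de$-neighborhoods overlap and cover $U_{k+1}$, so each $I$ contributes at most one new component and $N(k+1) \le N(k)$. The main obstacle is not the combinatorial count itself but the careful setup of the common nested intervals $T^{(k)}$; once the geometric picture is secured via the hypothesis on the absence of critical intersections, the spanning-plus-injectivity dichotomy delivers the induction step essentially for free.
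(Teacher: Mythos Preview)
Your proof is correct and follows essentially the same strategy as the paper's: induction on $i$, with the inductive step showing that at most one component at level $k$ can split into two at level $k+1$, giving $N(k+1)\le N(k)+1$. Your justification of the ``at most one split'' via the image $f^{s+k+2}(I)$ spanning the gap between the two $\delta$-neighborhoods (so two such images would overlap, contradicting injectivity of $f^{s+k+2}|_{T^{(k+1)}}$) is equivalent to the paper's argument, which phrases the same geometric fact as the impossibility of having $r_{s+i+1}<\delta$ on intervals sandwiched between two intervals where $r_{s+i+1}\ge\delta$.
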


To bound the number of connected components whose intersection 
with \linebreak $Y_{n,\ep}(t_1,\ldots,t_m)$ is non-empty, we have the
following claim.
\begin{Cla}\label{withsingularity}
Let $l\in \N$ and $\ep>0$ be constants and let $\de=\de(l)$ be 
the number given by Lemma \ref{uniformcontinuity}. For any
$a_1,\ldots, a_s$ with $a_j\in\{0,1\}$, $\{t_1,\ldots, t_m\}\subset
\{0,1,\ldots,n-1\}$. If $\{s+1,\ldots,s+i\}\cap \{t_1,\ldots,
t_m\}=\emptyset$ and $i\leq l$, then
\begin{multline*}
\#\{I\subseteq  C_{\de}(a_1,\ldots,a_s,0^{i+1}), 
I\cap Y_{n,\ep}(t_1,\ldots,t_m)\neq\emptyset \} \leq (i+1)
\#\{I\subseteq C_{\de}(a_1,\ldots,a_s,0), I\cap
Y_{n,\ep}(t_1,\ldots,t_m)\neq\emptyset \}.
\end{multline*}
\end{Cla}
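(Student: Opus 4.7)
The plan is to reduce Claim \ref{withsingularity} to Claim \ref{withoutsingularity}. Fix a connected component $J$ of $C_\de(a_1,\ldots,a_s,0)$ that meets $Y_{n,\ep}(t_1,\ldots,t_m)$. It suffices to show that the orbit of the whole of $J$ avoids the critical sets $\crit_{s+k}$ for $k=1,\ldots,i$; then Claim \ref{withoutsingularity}, applied with $n$ replaced by $i$, bounds the number of sub-components of $C_\de(a_1,\ldots,a_s,0^{i+1})$ contained in $J$ by $i+1$. Summing over all such $J$ then yields the stated inequality, since every component of $C_\de(a_1,\ldots,a_s,0^{i+1})$ meeting $Y_{n,\ep}$ is contained in some $J$ of the right form.

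The key geometric observation is that $|f^{s+1}(J)|\leq 2\de$. Indeed, $J$ is contained in a single maximal interval $T$ on which $f^{s+1}$ is a diffeomorphism (namely $T=T_{s+1}(y)$ for any $y\in J$), and the condition $r_{s+1}(y)<\de$ translates into $f^{s+1}(y)$ lying within distance $\de$ of the boundary $\partial f^{s+1}(T)$. Since $f^{s+1}(J)$ is a connected subset of the union of the two $\de$-neighborhoods of the endpoints of $f^{s+1}(T)$, it must lie entirely in one of them (or else $|f^{s+1}(T)|<2\de$ and the bound is immediate).

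Now pick any $x^*\in J\cap Y_{n,\ep}(t_1,\ldots,t_m)$. The hypothesis $\{s+1,\ldots,s+i\}\cap\{t_1,\ldots,t_m\}=\emptyset$ gives $f^{s+k}(x^*)\notin V_{\ep}\crit_{s+k}$ for $k=1,\ldots,i$. Applying Lemma \ref{uniformcontinuity} to the shift index $s+1$ and to the parameters $\ep,l$ that produced $\de=\de(l)$: for any $y\in J$ we have $|f^{s+1}(y)-f^{s+1}(x^*)|\leq 2\de$ by the previous observation, and therefore $|f^{s+1+j}(y)-f^{s+1+j}(x^*)|<\ep$ for all $0\leq j\leq l$. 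Since $i\leq l$, writing $k=j+1$ with $k=1,\ldots,i$ gives $|f^{s+k}(y)-f^{s+k}(x^*)|<\ep$, and together with $f^{s+k}(x^*)\notin V_{\ep}\crit_{s+k}$ this yields $f^{s+k}(y)\notin\crit_{s+k}$ for all $y\in J$ and $1\leq k\leq i$, which is exactly the hypothesis of Claim \ref{withoutsingularity}.

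The main obstacle is the geometric observation $|f^{s+1}(J)|\leq 2\de$: it requires recognizing $r_{s+1}(y)$ as the distance from $f^{s+1}(y)$ to $\partial f^{s+1}(T)$ and then using the connectedness of $f^{s+1}(J)$ to force the image to be trapped near a single endpoint. Once this bound is established, the $C^1$-uniform equicontinuity encoded in Lemma \ref{uniformcontinuity} promotes the weak $V_{\ep}\crit$-avoidance of the distinguished point $x^*$ to honest $\crit$-avoidance for every point of $J$, precisely what is needed to invoke Claim \ref{withoutsingularity} and close the argument.
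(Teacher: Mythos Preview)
Your argument is correct and follows the same route as the paper's proof: establish $|f^{s+1}(J)|\leq 2\de$, invoke Lemma~\ref{uniformcontinuity} to get $\ep$-control on the next $l$ iterates, use this together with the $V_\ep\crit$-avoidance coming from $Y_{n,\ep}(t_1,\ldots,t_m)$ to conclude that $f^{s+k}(J)$ misses $\crit_{s+k}$ for $1\leq k\leq i$, and then appeal to Claim~\ref{withoutsingularity}. The only cosmetic difference is that the paper phrases the middle step as a contrapositive (if $f^{s+j}(J)$ met $\crit$ then all of $J$ would land in $V_\ep\crit$, forcing $J\cap Y_{n,\ep}=\emptyset$), whereas you argue directly from a distinguished point $x^*$; your explicit justification of the bound $|f^{s+1}(J)|\leq 2\de$ via connectedness is more detailed than the paper's, which simply asserts it.
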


\begin{proof}[Proof of Lemma \ref{lemaprin}]
We prove the lemma assuming the claims above. We have basically 
four cons\-tants, namely, $\de,\ga,\ep, l$. It is very important the
order in what they are chosen. First, we choose $l\in\N$ according to
the equation (\ref{choosel}), then we choose $\ga>0$ according to
(\ref{choosega}). Next, we find $\ep>0$, using Lemma
\ref{lemafrequence}, in such a way that (\ref{decompositionYn}) holds.
Finally, given $\ep$ and $l$, let $\de>0$ be the constant given by
Lemma \ref{uniformcontinuity} and satisfying (\ref{choosede}). 

Given $m<n$, $\de>0$ and $\ep>0$, let us consider $a_1,\ldots,a_n$ 
with $a_i\in\{0,1\}$ (such that $\sumaupla{n}<\de n$) and 
$\{t_1,\ldots, t_m\}\subset \{0,\ldots,n-1\}$. 
We can decompose the sequence $a_1\ldots a_n$ in maximal blocks of
0\textquoteright s and 1\textquoteright s. We write the symbol $\xi$
in the $j$-th position if $a_j=1$ or, $a_j=0$ and $j=t_k$ for some 
$k\in\{1,\ldots,m\}$. In this way we have,  
\begin{equation} \label{blockdecompo}
a_1 a_2\ldots a_n=\xi^{i_1}0^{j_1}\xi^{i_2}0^{j_2}\ldots
\xi^{i_h}0^{j_h}
\end{equation}
with $0\leq i_k,j_k\leq n$ for $k=1,\ldots,h$, 
$\sum_{k=1}^{h}(i_k+j_k)=n$ and $\sum_{k=1}^{h} i_k < m+\de n$. 

Lets us assume that $a_1,\ldots, a_n$ are as in (\ref{blockdecompo}). 
Let $l,\ep$ and $\de$ be as in Lemma \ref{uniformcontinuity}.  Using
claims \ref{3(p+1)timesthelast}  and \ref{withsingularity} we have,
\begin{align*}
\#\{I \subset &C_{\de}(a_0,\ldots,a_n), 
I\cap Y_{n,\ep}(t_1,\ldots,t_m)\neq\emptyset\}\leq \\
&\leq (3(p+1)(l+1)^{\frac{j_h}{l}+1}(3(p+1))^{i_h}) 
\ldots (3(p+1)(l+1)^{\frac{j_1}{l}+1}(3(p+1))^{i_1}) \\
&\leq (3(p+1))^{\sum_{k=1}^{h}i_k} 
(3(p+1))^h (l+1)^{\frac{\sum_{k=1}^{h}j_k}{l}+h} \\
&\leq (3(p+1))^{m+\de n+h} (l+1)^{\frac{n}{l}+h}.
\end{align*}

Let us remark some useful properties about the 
decomposition (\ref{blockdecompo}):
\begin{itemize}
\item if $m<\ga n$ then, since $\sumaupla{n}< \de n$, we have 
that  $\sum_{k=1}^{h}i_k < \ga n+\de n$;
\item if $\sumaupla{n}< \de n$ and $m<\ga n$, the number 
of blocks $\zeta^{i_t}0^{j_t}$ is bounded by the sum of these
quantities, i.e, $h< (\de+\ga)n+1$.
\end{itemize}
Therefore, if $\sumaupla{n}< \de n$ and $m<\ga n$ we conclude 
from the inequality above that for $n$ big enough,
\begin{equation}
\begin{split} \label{fundamental}
\#\{I&\subset C_{\de}(a_1,\ldots,a_n), 
I\cap Y_{n,\ep}(t_1,\ldots,t_m)\neq\emptyset\} \\
& \leq (3(p+1))^{\ga n+\de n}(3(p+1))^{2(\de+\ga)n} 
(l+1)^{\frac{n}{l}+2(\de+\ga)n} \leq \exp (n \:\psi_0(l,\ga,\de) )
\end{split}
\end{equation}
where $\psi_0(l,\ga,\de)=3(\de+\ga)\log (3(p+1))+ 
2(\de+\ga+\frac{1}{l})\log (2l)$. 

On the other hand, by the Stirling\textquoteright s formula, 
the number of subsets of $\{0,1,\ldots,n-1\}$ of size less than $\ga
n$ is bounded by $\exp (n(\psi_1(\ga)))$ and $\psi_1(\ga)\ra 0$ when
$\ga\ra 0$.
Therefore, from this fact and (\ref{fundamental}), we conclude
\begin{equation} \label{sumofY_n}
\sum_{t_1,\ldots, t_m} \# \{I\subset C_{\de}(a_1,\ldots,a_n); 
I\cap Y_{n,\ep}(t_1,\ldots,t_m)\neq\emptyset\}
\leq \exp (n\:\psi_2(l,\ga,\de))
\end{equation}
where the sum is over all subset 
$\{t_1, \ldots, t_m\}\subset\{0,1,\ldots, n-1\}$ with $m<\ga n$, and
$\psi_2(l,\ga,\de)=\psi_0(l,\ga,\de)+\psi_1(\ga)$.

Once again, using the Stirling\textquoteright s formula we conclude 
that the number of sequences $a_1, a_2,\ldots, a_n$ of
0\textquoteright s and 1\textquoteright s  such that $\sumaupla{n}<
\de n$ is less or equal than $\exp (n\psi_3(\de))$ with
$\psi_3(\de)\ra 0$ when $\de\ra 0$. Hence, by (\ref{A_n<Cfinal}) and
(\ref{sumofY_n}), we have that whenever $\ga$ and $\ep$ satisfy
(\ref{decompositionYn}),
\begin{equation*}
\# A\pcuad_n(\de)\leq \exp (n\: \psi_4(l,\ga,\de))
\end{equation*}
where
\begin{equation*}
\psi_4(l,\ga,\de)=3(\de+\ga)\log (3(p+1))+ 
2\left(\de+\ga+\frac{1}{l}\right)\log (2l)+ \psi_1(\ga)+\psi_3(\de) .
\end{equation*}
Hence, we have to choose $l$ such that
\begin{equation} \label{choosel}
\frac{2}{l}\log (2l)<\frac{\la}{14}
\end{equation}
and, let $\ga>0$ be such that
\begin{equation} \label{choosega}
\left.
\begin{split}
2\ga\log (2l)&<\frac{\la}{14} \\
3\ga \log (3(p+1))&<\frac{\la}{14} \\
\psi_1(\ga)&<\frac{\la}{14}
\end{split}
\:\: \right\}.
\end{equation}
Next, we find $\ep>0$, using Lemma \ref{lemafrequence}. 
Finally, given $\ep$ and $l$, let $\de>0$ be the constant given by
Lemma \ref{uniformcontinuity} and satisfying 
\begin{equation} \label{choosede}
\left.
\begin{split}
2\de \log (2l)&<\frac{\la}{14} \\
3\de\log (3(p+1))&<\frac{\la}{14} \\
\psi_3(\de)&<\frac{\la}{14}
\end{split}
\:\: \right\} .
\end{equation}
With this choice, $\psi_4(l,\ga,\de)\leq \frac{\la}{2}$. 
Hence the first part of Lemma \ref{lemaprin} is proved, assuming the
three claims. Now we will prove the claims. 
\end{proof}
\end{subsection}

\begin{subsection}{Proof of claims, Lemmas \ref{A_nY_n} and
\ref{lemaprin}}

\noindent
\\
\emph{Proof of Claim \ref{3(p+1)timesthelast}}. Let $I$ be a 
connected component of $C_{\de}(a_1,\ldots,a_s)$.

\emph{Case 1}. $f^s(I)\cap \crit=\emptyset$. In this case, $I$ is
divided at most in 3 connected components of
$C_{\de}(a_1,\ldots,a_s,0)\cup C_{\de}( a_1,\ldots,a_s,1)$. Indeed,
since $I\subset T_{s+1}(x)$ for every $x\in I$, if $I\p\subset I$ is a
component of $C_{\de}(a_1,\ldots,a_s,0)$, it can not exist one
component of $C_{\de}(a_1,\ldots,a_s,1)$ at each side of $I\p$. Hence,
the following situations can occur: 
\begin{itemize}
\item[$i)$] There are two components of $C_{\de}(a_1,\ldots,a_s,0)$ in
$I$, each of them has one extreme of $I$, and between them there is
one component of $C_{\de}(a_1,\ldots,a_s,1)$.
\item[$ii)$] There is exactly one component of 
$C_{\de}(a_1,\ldots,a_s,0)$ in $I$. In this case there is at most one
component of $C_{\de}(a_1,\ldots,a_s,1)$ in $I$.
\item[$iii)$] There are no components of $C_{\de}(a_1,\ldots,a_s,0)$ 
in $I$. In this case $I$ is a component of
$C_{\de}(a_1,\ldots,a_s,1)$.
\end{itemize}

\emph{Case 2}. $f^s(I)\cap\crit\neq\emptyset$. First $I$ is divided at
most in $p+1$ components, each one with at least one boundary which
goes by $f^s$ to $\crit$. After that, following the same arguments
used in case 1, we conclude that each one of these components is
divided at most in 3 components. \\

\noindent
\emph{Proof of Claim \ref{withoutsingularity}}. The proof will 
be by induction on $i$. For $i=1$, it follows by the proof of Claim
\ref{3(p+1)timesthelast}. Let us assume that the statement is true for
$j\leq i-1$. Let $I_1, \ldots, I_t$ be the components of
$C_{\de}(a_1,\ldots,a_s,0^{(i-1)+1})$ contained in $I$. By the
induction hypothesis $t\leq i$ and we assume that
$f^{i}(I)\cap\crit=\emptyset$. We claim that there exist at most one
$k\in\{1,\ldots,t\}$ such that $I_k$ is divided in two components of
$C_{\de}(a_1,\ldots,a_s,0^{i+1})$ (the others $I_k$\s s generate one
or none component of $C_{\de}(a_1,\ldots,a_s,0^{i+1})$). Indeed, if
$I_{k_1}$ and $I_{k_2}$ are divided in two components of
$C_{\de}(a_1,\ldots,a_s,0^{i+1})$, let $I^+_{k_1}$ and $I^-_{k_1}$ be
the components of $C_{\de}(a_1,\ldots,a_s,0^{i+1})$ and let $J_{k_1}$
be the component of $C_{\de}(a_1,\ldots,a_s,0^i,1)$ contained on
$I_{k_1}$. Analogously, let $I^+_{k_2}$, $I^-_{k_2}$, $J_{k_2}$ be the
corresponding components for $I_{k_2}$. Two of the $I^*_{k_j}$
($j\in\{1,2\}, *\in\{+,-\}$) are between $J_{k_1}$ and $J_{k_2}$. This
is a contradiction because $r_{s+i+1}(x)<\de$ for $x\in I^*_{k_j}$ and
$r_{s+i+1}(x)\geq\de$ for $x\in J_{k_1}\cup J_{k_2}$. Hence, there are
at most $i+1$ components of $C_{\de}(a_1,\ldots,a_s,0^{i+1})$
contained in $J$.\qed \\
\noindent
\\
\emph{Proof of Claim \ref{withsingularity}}. 
Let $I$ be a connected component of $C_{\de}(a_1,\ldots,a_s,0)$. 
Then we have $|f^{s+1}(I)|\leq 2\de$, and by Lemma
\ref{uniformcontinuity}, $|f^{s+i}(I)|< \ep$ for $i\leq l+1$. If
$f^{s+j}(I)\cap\crit\neq\emptyset$ for some $j\leq i$, then for all
$x\in I$, $f^{s+j}(x)\in V_{\ep}\crit$. Since $\{s+1,\ldots,s+i\}\cap
\{t_1,\ldots, t_m\}=\emptyset$, then $I\cap
Y_{n,\ep}(t_1,\ldots,t_m)=\emptyset$.

Hence, if $I\cap Y_{n,\ep}(t_1,\ldots,t_m)\neq\emptyset$ and 
$\{s+1,\ldots,s+i\}\cap \{t_1,\ldots, t_l\}=\emptyset$, then
$f^{s+j}(I)\cap\crit=\emptyset$ for all $1\leq j\leq i$. The result
follows using Claim \ref{withoutsingularity}.
\qed
\\
\noindent
\begin{proof}[End of proof of Lemma \ref{lemaprin}]
We have proved the existence of $\de$ (given $\la$) such that 
the number of connected components of $A_n\pcuad(\de)$ is less than
$\exp(n\la/2)$. On the other hand, observe that the choice of $\de$ is
given fundamentally by Lemmas \ref{lemafrequence} and
\ref{uniformcontinuity}. Namely, $\de$ depends on: the constant $\la$
in the definition of $Y_n(\la)$; the uniformity of $\ep$ (given
$\zeta>0$) on the equation (\ref{byequicontinuity}); the uniform
boundedness of $|Df_k|$ on the proof of Lemma \ref{lemafrequence}; the
uniformity of $\de$ (given $\ep$ and $l$) on the equation
(\ref{equationuniformcontinuity}); and the uniform boundedness of the
number of critical points for $f_k$, where $k\geq 0$. So, $\de$
depends only on the modulus of continuity
(\ref{uniformlyequicontinuous}), the uniform bound $\Gamma$ in
(\ref{uniformlybounded}) and the uniform bound $p$ for the cardinal of
the set of critical points, as stated. This concludes the proof of
Lemma \ref{lemaprin}.
\end{proof}

Finally we will prove that Lemma \ref{A_nY_n} follows as a 
consequence of Lemma \ref{lemaprin}.
\begin{proof}[Proof of Lemma \ref{A_nY_n}]
Note that if $J\pcuad$ is a connected component of $A_n\pcuad(\de)$ 
then $f^n$ restricted to $J\pcuad$ is a diffeomorphism onto its
image. Since the set $Y_n(\la)$ is an open subset of $I_0$, there
exist at most countably many components $\{I_k\}_{k\in\N}$ of
$Y_n(\la)\cap A_n(\de)$ on $J\pcuad$. For all $k\in\N$,
\begin{equation*}
|I_k|<(\exp (-n\la)) |f^n(I_k)|,
\end{equation*}
since for every $x\in I_k$, $|Df^n(x)|>\exp(\la n)$. Adding 
these inequalities ($k\in \N)$,
\begin{equation*}
|\cup_{k} I_k|<(\exp (-n\la)) \sum_{k} |f^n(I_k)|
\leq (\exp (-n\la))|f^n(J\pcuad)|.
\end{equation*}
Then, since $|f^n(J\pcuad)|$ is bounded by $|I_0|$, 
\begin{equation*}
|(A_n(\de)\cap J\pcuad)\cap Y_n(\la)|< |I_0| \exp (-n\la)
\end{equation*}
for every connected component $J\pcuad$ of $A_n\pcuad(\de)$. 
To finish the proof of this lemma it is enough to use the estimate of
the number of components of $A_n\pcuad(\de)$ given by Lemma
\ref{lemaprin}. The statement about the dependence of $\de$ follows
from the analogous conclusion on Lemma \ref{lemaprin}.
\end{proof}
\end{subsection}
\end{section}

\begin{section}{Consequences of Theorem \ref{PrincipalB}}
\label{Consequences}
We prove Corollaries \ref{MainCor} and \ref{KellerwithoutSf}. 
Recall that this last result deals with only one single interval map.

\begin{subsection}{Proof of Corollary \ref{MainCor}}
Since (\ref{limsup>2la}) holds for all $x\in H$, $H\subset \cup_{k\geq
n} Y_n(\la)$ (for any $n\in \N$). Thus 
\begin{multline*}
\left|\left(\bigcap_{k\geq n} A_k(\de)\cup 
\complement Y_k(\la)\right)\cap H\right|\leq
\left|\left(\bigcap_{k\geq n} A_k(\de)\cup \complement
(Y_k(\la))\right)\cap \bigcup_{k\geq n} Y_k(\la)\right| \leq
\sum_{k=n}^\infty |A_k(\de)\cap Y_k(\la)|
\end{multline*}
for any $n\in\N$. By Lemma \ref{A_nY_n}, for any $\ep>0$, 
the last sum is less than $\ep$ if $n\geq N(\ep)$. This implies that  
$|(\cap_{n\geq N(\ep)} \cup_{k=n}^{\infty} \complement A_k(\de)\cap
Y_k(\de))\cap H|\geq |H|-\ep$. This means that the set
\begin{equation*} 
\left\{x\in H; \slim \frac{1}{n} \soma r_i(x)\geq\de^2\right\} 
\end{equation*}
has Lebesgue measure greater than  $|H|-\ep$. Since this can be 
done for any $\ep>0$, the corollary follows with $\constantB=\de^2$.
\qed
\end{subsection}

\begin{subsection}{Proof of Corollary \ref{KellerwithoutSf}}
The proof that we give is similar to the proof by de Melo and van
Strien \cite[Theorem V.3.2]{dMvS} for Keller\textquoteright s theorem.
We also construct a Markov map $F$ induced by $f$. 
\begin{proof}[Proof of Corollary \ref{KellerwithoutSf}]
By Theorem \ref{C^3maps} (item $(iii)$) and Corollary 
\ref{MainCor} (applied to $f_n=f$ for $n\geq 0$),
\begin{equation*} 
X=\left\{x\in I_0;\: \limsup_{n\ra+\infty} 
\frac{1}{n}\soma r_i(x)\geq \constantB\right\}
\end{equation*}
has full Lebesgue measure for some $\constantB>0$.

Let us consider a partition $\mathcal{P}$ of $I_0$ into 
(a finite number of) subintervals, with norm less than $\constantB/4$
and such that the set of extremes of such subintervals is forward
invariant. The existence of this partition follows from Theorem
\ref{C^3maps} (items $(i)$ and $(ii)$). Let $\constantB\p$ be the
minimum of the lengths of the elements of $\mathcal{P}$. For every
$x\in I_0$, we denote by $J(x)$ the subinterval of the partition which
contains $x$. And for every $J\in\mathcal{P}$, let us denote by $J^-$
(resp. $J^+$) the rightmost (resp. leftmost) subinterval of the
partition next to $J$. We choose $N\in\N$ such that the intervals of
monotonicity of $f^n$ have length less than $\constantB\p/4$,
for $n\geq N$.

Given $x\in X$, there are infinitely many $k\s s$ such that 
$r_k(x)> \constantB/2$. Let $k(x)\geq N$ be minimal such that
\begin{equation} \label{intervalandneighbours}
f^{k(x)}(T_{k(x)}(x))\supset J(f^{k(x)}(x))
\cup J(f^{k(x)}(x))^{+} \cup J(f^{k(x)}(x))^{-} ,
\end{equation}
and consider $I(x)\subset T_{k(x)}(x)$ such that 
$f^{k(x)}(I(x))= J(f^{k(x)}(x))$. Obviously, for every $y\in I(x)$,
$k(y)\leq k(x)$; and using the forward invariance of the set of
extremes of the subintervals of $\mathcal{P}$, we conclude that in
fact, $k(y)=k(x)$ and $I(y)=I(x)$. Hence, we can define
$F:\cup_{x\in X} I(x)\ra \cup_{J\in\mathcal{P}} J$,
by $F_{|I(x)}={f^{k(x)}}_{|I(x)}$. We claim that this map is Markov
(recall Definition \ref{Markovmap}). Indeed, $(M_3)$ is satisfied
because $|F(I(x))|=|J(F(x))|\geq \constantB\p$. Since $I(x)$ does not
contain extremes of subintervals of $\mathcal{P}$ in its interior,
$I(x)$ is completely contained on some element of $\mathcal{P}$. This
implies that $(M_2)$ holds.

By Theorem \ref{TheoremKoz}, $B(f^{k(x)},T,M)\geq K\p$ for any 
$M\subset T\subset T_{k(x)}$. On the other hand, by
(\ref{intervalandneighbours}), $f^{k(x)}(T_{k(x)}(x))$ contains a
neighborhood $\tau$-scaled of $f^{k(x)}(I(x))$, where
$\tau=4\constantB\p/\constantB$.  Hence, by Koebe Principle (see
\cite[Theorem IV.1.2]{dMvS}), $F$ has bounded distortion on $I(x)$.
It remains to show bounded distortion for the iterates of $F$. Given
$x\in X$ and $s\in \N$; let $m(s,x)\in\N$ be such that
$F^s(x)=f^{m(s,x)}(x)$ and let $I_s(x)$ be the domain of $F^s$
containing $x$. By the choice of $N$, since $m(s,x)\geq N$,
$T_{m(s,x)}(x)$ is contained in at most two elements of $\mathcal{P}$.
Using this and (\ref{intervalandneighbours}) we can prove inductively
that for $x\in X$ and $s\geq 1$ ,
\begin{equation*} 
f^{m(s,x)}(T_{m(s,x)}(x))\supset J(f^{m(s,x)}(x))
\cup J(f^{m(s,x)}(x))^{+} \cup J(f^{m(s,x)}(x))^{-} .
\end{equation*}
So, $(M_1)$ holds and $F$ is a Markov map as we claimed. Hence, there
exists an ergodic absolutely continuous invariant measure $\nu$ for
$F$ (see \cite[Theorem V.2.2]{dMvS}). 
This measure induces an absolutely continuous invariant measure for 
$f$ if $\sum_{i=1}^{\infty} k(i)\nu(I_i)<\infty$ (see \cite[Lemma
V.3.1]{dMvS}). Assume by contradiction that $\sum_{i=1}^{\infty}
k(i)\nu(I_i)=\infty$. By Birkhoff\s s Ergodic~Theorem,
\begin{equation*}
\frac{n_s(x)}{s}=\frac{k(x)+k(F(x))+\ldots+k(F^s(x))}{s}\ra 
\int k(x) d\nu(x)=\sum_{i=1}^{\infty} k(i)\nu(I_i)=\infty
\end{equation*}
for $\nu$-almost every point $x$. For every $x\in X$ and $i\in\N$, 
if $n_i(x)\leq n < n_{i+1}(x)$ and $r_n(x)> \constantB/2$, then
$n-n_i(x)< N$, since in this case $f^n(T_n(x))$ covers one element of
the partition and its two neighbors. Thus we have for $n_{s}(x)\leq n
< n_{s+1}(x)$,
\begin{equation*}
\frac{1}{n}\soma r_i(x) =\frac{1}{n}\sum_{i, r_i(x)> \constantB/2} 
r_i(x)+\frac{1}{n}\sum_{i, r_i(x)\leq \constantB/2} r_i(x)<
\frac{N(s+2)}{n_s(x)}|I_0|+\constantB/2
\end{equation*}
which implies that $\slim 1/n \soma r_i(x)<\constantB$. Since 
it holds for $\nu$-almost every $x$, it contradicts that $X$ has full
Lebesgue measure. Hence there exists
absolutely continuous invariant measure~for~$f$.
\end{proof}
\end{subsection}
\end{section}

\begin{section}{Hyperbolic-like times} \label{Hyperbolic-like}
In this section we develop some preparatory tools for the proof 
of Theorem \ref{PrincipalA}. The arguments are independent from the
previous sections. We prove a similar behavior of points with $r_k\geq
\secondelta$ (for some $\secondelta>0$) and points with $k$ being one
of its $(\sigma\p,\de)$-hyperbolic times. See Lemma 5.2 of \cite{ABV}
and Proposition \ref{difeoboundist} below. Because of this, if
$r_k(z)\geq \secondelta$, we say $k$ is a
$\secondelta$-\emph{hyperbolic-like time} for $z\in M$. 
We need to adapt some notations from subsection \ref{statement2} to
the setting defined by Theorem \ref{PrincipalA}. 

For every $z=(\te,x)\in\toro\times I_0$, let us denote by $T_i(\te,x)$
(or $T_i(z)$) the function $T_i\left(\{f_n\},x\right)$ defined on
subsection \ref{statement2}, considering the sequence
$\{f_{n}\}_{n\geq 0}$ given by $f_n=f_{g^n(\te)}$ for all $n\geq 0$.
We proceed analogously for $L_i(\te,x)$ (or $L_i(z)$), $R_i(\te,x)$
(or $R_i(z)$) and $r_i(\te,x)$ (or $r_i(z)$). We also define
\begin{align*}
\mathcal{T}_i(z):= &\:\{\te\}\times T_i(z);\\
\mathcal{L}_i(z),\, \mathcal{R}_i(z):=&\: \{\te\} \times L_i(z),\,
\{\te\}\times R_i(z);
\end{align*}
for every $z=(\te,x)\in \toro\times I_0$ and every $i\in\N$. In all
the results below we assume that we are in the conditions of Theorem
\ref{PrincipalA}.
\begin{subsection}{Horizontal behavior of dominated skew-products} 
\label{horizontalbehavior}
One important property of our mappings due to the domination
condition is the preservation of the nearly horizontal curves. This
means that the iterates of nearly horizontal curves are still nearly
horizontal. We state it in a precise way. 
\begin{Def} \label{tcurve}
We call $\widehat{X}\subset \toro\times I_0$ a $t-$\emph{curve} if 
there exists $J\subset \toro$  and $X:J\ra I_0$ such that:
$\widehat{X}=\graph(X)$, $X$ is $C^1$ and $|X^{\prime}(\te)|\leq t$
for every $\te\in J$.
\end{Def}
There exists an analogous definition given by Viana (see \cite{Via1},
section 2.1), but he also asks the second derivative to be less than
$t$. He calls the curves with these properties \emph{admissible
curves}. In his setting he proves that the admissible curves are
preserved under iteration. 
\begin{Pro} \label{admissiblecurves}
There exist $\al>0$ and $n_0\in\N$ such that, if $\widehat{X}$ 
is an $\al$-curve and $\fhi^n(\widehat{X})$ is the graph of a $C^1$
map, then $\fhi^n(\widehat{X})$ is an $\al$-curve, provided that
$n\geq n_0$. Moreover, there exists $C_1=C_1(\al)$ such that if
$\widehat{X}$ is a $\al$-curve, then $\fhi^n(\widehat{X})$ is a
$C_1$-curve, for all $n$, provided that $\fhi^n(\widehat{X})$ is a
graph.
\end{Pro}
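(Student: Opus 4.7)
The plan is to derive a first-order recurrence for the slope of the iterated curve along a single orbit, and then to control the resulting geometric series using the domination hypothesis (\ref{decomdomin1}). Writing $\fhi^k(\te,x)=(g^k(\te), F_k(\te,x))$ and fixing a base point $z_0=(\te, X(\te))$ on $\widehat{X}$, under the assumption that $\fhi^n(\widehat{X})$ is a graph, the chain rule expresses its slope at $\fhi^n(z_0)$ as
\begin{equation*}
s_n \;=\; \frac{\partial_\te F_n(\te,X(\te)) + \partial_x F_n(\te,X(\te))\, X'(\te)}{(g^n)'(\te)}.
\end{equation*}
Differentiating the identity $F_{k+1}(\te,x) = f(g^k(\te), F_k(\te,x))$ and simplifying produces the first-order recurrence
\begin{equation*}
s_{k+1} \;=\; \al_k + \be_k\, s_k, \qquad \al_k := \frac{\partial_\te f(\fhi^k(z_0))}{g'(g^k(\te))}, \quad \be_k := \frac{\partial_x f(\fhi^k(z_0))}{g'(g^k(\te))},
\end{equation*}
with initial datum $s_0=X'(\te)$.

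Unfolding the recurrence yields
\begin{equation*}
s_n \;=\; s_0 \prod_{k=0}^{n-1}\be_k \;+\; \sum_{j=0}^{n-1} \al_j \prod_{k=j+1}^{n-1}\be_k,
\end{equation*}
and the key step is to apply (\ref{decomdomin1}) along the orbit of the shifted point $\fhi^{j+1}(z_0)$: after an index shift one gets $\prod_{k=j+1}^{n-1}|\be_k|\le C\constantdomi^{\,n-j-1}$, together with the analogous bound $\prod_{k=0}^{n-1}|\be_k|\le C\constantdomi^{\,n}$. Since $\fhi$ is $C^3$ on the compact manifold $M=\toro\times I_0$, one has $M_1:=\sup|\partial_\te f|<\infty$; since $g$ is uniformly expanding, $\mu:=\inf|g'|>1$, hence $|\al_k|\le M_1/\mu$. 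Summing the resulting geometric series yields
\begin{equation*}
|s_n|\;\le\; C\constantdomi^{\,n}\,|s_0| + \frac{M_1 C}{\mu(1-\constantdomi)} \;=:\; C\constantdomi^{\,n}|s_0| + K.
\end{equation*}

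To conclude, set $\al := 2K$ and choose $n_0\in\N$ with $C\constantdomi^{\,n_0}\le 1/2$. If $\widehat{X}$ is an $\al$-curve and $n\ge n_0$ (with $\fhi^n(\widehat{X})$ a graph), then $|s_n|\le K + (2K)(1/2)=\al$, so $\fhi^n(\widehat{X})$ is again an $\al$-curve. For the \emph{moreover} part, the same estimate gives the uniform bound $|s_n|\le K + C\,\al =: C_1$ for every $n\ge 0$, proving that $\fhi^n(\widehat{X})$ is a $C_1$-curve whenever it is a graph. The main obstacle I anticipate is the careful bookkeeping of the chain rule together with the index shift needed to match the tail product $\prod_{k=j+1}^{n-1}|\be_k|$ to the domination (\ref{decomdomin1}); once this is handled cleanly, the geometric-series estimate writes itself.
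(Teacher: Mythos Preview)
Your argument is correct and follows essentially the same route as the paper's proof: set up the first-order recurrence $s_{k+1}=\al_k+\be_k s_k$ for the slope, unfold it, and bound the tail products $\prod_{k=j+1}^{n-1}|\be_k|$ by $C\constantdomi^{\,n-j-1}$ via the domination inequality (\ref{decomdomin1}), obtaining $|s_n|\le C\constantdomi^{\,n}|s_0|+K$. The paper phrases the same computation slightly differently (it writes $X_n(g^n(\te))$ inductively and uses $L=\sup|\partial_\te f/\partial_\te g|$ in place of your $M_1/\mu$), but the structure and the choice of $\al$ and $n_0$ are identical.
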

\begin{proof}
Let $\widehat{X}=\{(\te, X(\te)); \te\in J\}$ be a $C^1$ curve 
with $|X\p(\te)|\leq \al$ for every $\te\in J$. Let us define
inductively for $n\geq 1$, $X_n(g^n(\te))=f(g^{n-1}(\te),
X_{n-1}(g^{n-1}(\te)))$, where $X_0=X$. 
Thus we can prove by induction that
$\fhi^n(\te,X(\te))=(g^n(\te), X_n(g^n(\te)))$, for $n\geq 1$. 

Proceeding similarly as in \cite[Lemma 2.1]{Via1}, using the partial
hyperbolicity (see inequality (\ref{decomdomin1})) and considering
$L=\sup (\partial_\te f/\partial_\te g)$, we have that 
\begin{equation*}
|X_n\p(g^n(\te))|\leq L+\sum_{k=1}^{n-1} LC(\constantdomi)^k 
+C\constantdomi^n\al\leq LCA + C\constantdomi^n \al
\end{equation*}
for $n\geq 1$, where $A=\sum_{k=0}^{\infty} \constantdomi^k$. 
Hence, for some $\al$ and $n_0$ big enough, $|X_n\p(g^n(\te))|\leq
\al$ for all $n\geq n_0$.
\end{proof}
Since all the iterates of $\al$-curves are almost horizontal 
then their lengths are given basically by the derivative of $\fhi$ in
the horizontal direction. We state this in the following result.
\begin{Pro}  \label{contractionadmissible}
Let $C_1=C_1(\al)$ be the constant found on Proposition 
\ref{admissiblecurves}. There exists $C_2=C_2(\al)>0$, such that if
$\widehat{X}=\{(\te, X(\te)); \te\in J\}$ and
$\fhi^k(\widehat{X})=\{(\te, X_k(\te)); \te\in J_k\}$ are graphs with
$|X\p|, |X\p_k|\leq C_1$, then for all $z,w\in \fhi^k(\widehat{X})$,
\begin{equation*} 
\dist_{\widehat{X}}(\fhi^{-k}(z),\fhi^{-k}(w))\leq C_2|
\partial_\te(g^k (\te_k)|^{-1}\dist_{\fhi^k(\hat{X})}(z,w) 
\end{equation*}
for some $\te_k\in J$, where $\dist_A$ is the distance induced by 
the metric over the curve $A$.
\end{Pro}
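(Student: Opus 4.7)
The plan is to exploit the fact that both $\widehat{X}$ and its iterate $\fhi^k(\widehat{X})$ are (nearly horizontal) graphs with derivative bounded by $C_1$, so that the arc length along each is essentially comparable to the length of its projection onto $\toro$. Once this comparability is established, the inequality reduces to a single application of the mean value theorem to $g^k$, using the (trivial) observation that the first coordinates of points on the curves are simply related by $g^k$.

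More precisely, I would first record the elementary fact that for any $C^1$ graph $\widehat{Y} = \{(\te, Y(\te)) : \te \in J\}$ with $|Y'| \le C_1$, the arc length along $\widehat{Y}$ between two points $(\te_1, Y(\te_1))$ and $(\te_2, Y(\te_2))$ satisfies
\begin{equation*}
|\te_1 - \te_2| \;\le\; \dist_{\widehat{Y}}\bigl((\te_1,Y(\te_1)),(\te_2,Y(\te_2))\bigr) \;=\; \int_{\te_1}^{\te_2}\sqrt{1+Y'(\te)^2}\,d\te \;\le\; \sqrt{1+C_1^2}\,|\te_1-\te_2|.
\end{equation*}
I would then apply this simultaneously to $\widehat{X}$ and to $\fhi^k(\widehat{X})$, both of which have derivative bounded by $C_1$ by hypothesis.

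Now let $z = (\te_z, X_k(\te_z))$ and $w = (\te_w, X_k(\te_w))$ be points of $\fhi^k(\widehat{X})$, and write $\fhi^{-k}(z) = (\te'_z, X(\te'_z))$, $\fhi^{-k}(w) = (\te'_w, X(\te'_w))$ for the corresponding preimages along $\widehat{X}$ (taking the branch of $g^{-k}$ that maps $\te_z, \te_w$ back into $J$, which is well defined since $g$ is a local diffeomorphism). Since $\fhi$ has skew-product form, $g^k(\te'_z) = \te_z$ and $g^k(\te'_w) = \te_w$, so by the mean value theorem on a lift to $\R$ there exists $\te_k$ between $\te'_z$ and $\te'_w$ with
\begin{equation*}
|\te_z-\te_w| \;=\; |g^k(\te'_z) - g^k(\te'_w)| \;=\; |\partial_\te g^k(\te_k)|\cdot |\te'_z - \te'_w|.
\end{equation*}
Combining this with the two arc-length estimates yields
\begin{equation*}
\dist_{\widehat{X}}(\fhi^{-k}(z),\fhi^{-k}(w)) \;\le\; \sqrt{1+C_1^2}\,|\te'_z-\te'_w| \;=\; \frac{\sqrt{1+C_1^2}}{|\partial_\te g^k(\te_k)|}\,|\te_z-\te_w| \;\le\; \frac{\sqrt{1+C_1^2}}{|\partial_\te g^k(\te_k)|}\,\dist_{\fhi^k(\widehat{X})}(z,w),
\end{equation*}
so the proposition holds with $C_2 = C_2(\al) = \sqrt{1+C_1(\al)^2}$.

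There is no real obstacle here; the only care required is to make sure the inverse branch $g^{-k}$ on the curve is chosen consistently (which is legitimate because the curve $\fhi^k(\widehat{X})$ is, by assumption, a single graph, so its $\te$-projection is one-to-one and lifts to an interval on $\R$ where $g^k$ is a diffeomorphism from $J$ onto $J_k$). The conceptual content is entirely in the bi-Lipschitz comparison between arc length along an $\al$-curve (or $C_1$-curve) and its horizontal projection, after which the contraction rate in the preimage direction is forced to equal the inverse of the horizontal expansion rate of $g^k$, up to the multiplicative constant $\sqrt{1+C_1^2}$.
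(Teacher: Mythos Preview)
Your proof is correct and follows essentially the same approach as the paper: both arguments rest on the bi-Lipschitz comparison between arc length along a $C_1$-curve and its horizontal projection, combined with the horizontal expansion of $g^k$, and both arrive at the same constant $C_2=\sqrt{1+C_1^2}$. The only cosmetic difference is that the paper parametrizes the image curve by $\te\in J$ and bounds the integral $\int_{\te_z}^{\te_w}|\partial_\te g^k(\te)|\,|v_1(\te)|\,d\te$ from below by taking $\te_k$ to minimize $|\partial_\te g^k|$ on the interval, whereas you parametrize by $J_k$ and invoke the mean value theorem to produce $\te_k$; these are interchangeable.
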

\begin{proof}
Let us consider the canonical norm in the tangent space, 
i.e, $||(v_1,v_2)||=(|v_1|^2+|v_2|^2)^{\frac{1}{2}}$,
where $v=(v_1,v_2)\in T_z(\toro\times I_0), v_1\in T_\te \toro, v_2\in
T_x I_0$ and $z=(\te, x)$. 

We denote the tangent vector to the curve $\widehat{X}$ at the 
point $(\te, X(\te))$ by $(v_1(\te),v_2(\te))$. Let us consider
$\te_z, \te_w\in J$ such that $\fhi^k(\te_z, X(\te_z))=z$ and
analogously for $w$. Then, since $|v_2(\te)|/|v_1(\te)|\leq C_1$,
\begin{align*}
\dist_{\fhi^k(\widehat{X})}(z,w)&=\int_{\te_z}^{\te_w} 
||D\fhi^k(\te, X(\te))(v_1(\te), v_2(\te))|| d\te
\geq \int_{\te_z}^{\te_w} |\partial_\te g^k(\te)||v_1(\te)| d\te 
\geq\qquad\qquad\\
&\geq \frac{1}{(1+(C_1)^2)^{\frac{1}{2}}} \int_{\te_z}^{\te_w} 
|\partial_\te g^k(\te)|(|v_1(\te)|^2+|v_2(\te)|^2)^{\frac{1}{2}} d\te
\\
&\geq \frac{1}{(1+(C_1)^2)^{\frac{1}{2}}} |\partial_\te g^k(\te_k)| 
\dist_{\widehat{X}}(\fhi^{-k}(z),\fhi^{-k}(w))
\end{align*}
where $\te_k$ is such that $|\partial_\te g^k(\te_k)|\leq 
|\partial_\te g^k(\te)|$ for $\te\in [\te_z,\te_w]$. This means that
we may take $C_2=(1+(C_1)^2)^{\frac{1}{2}}$.
\end{proof}
\end{subsection}

\begin{subsection}{Properties of the hyperbolic-like times}
In the case that $k$ is a hyperbolic time for $z$, there is
contraction for all the inverse iterates in a certain neighborhood of
$\fhi^k(z)$. In the case of hyperbolic-like times this property is not
necessarily verified. However, it holds the following result

\begin{Pro} \label{difeoboundist}
Given $\secondelta>0$, there exists $\delta_1>0$ such that for 
$z\in M$ with $r_k(z)\geq \secondelta$ for some $k\in\N$, there exists
a neighborhood $V_k(z)$ of $z$ such that $\fhi^k:V_k(z)\ra
B_{\delta_1}(\fhi^k(z))$ is a diffeomorphism with bounded distortion
(it depends on $\secondelta$, but it is independent of $z$ and $k$).
\end{Pro}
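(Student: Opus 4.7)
The plan is to take $V_k(z)$ to be the connected component of $\fhi^{-k}(B_{\delta_1}(\fhi^k(z)))$ containing $z$, for $\delta_1$ chosen small enough in terms of $\secondelta$, and to verify iteratively that each backward step is a diffeomorphism with controlled distortion. The construction combines the one-dimensional Koebe Principle along vertical fibers (using $(F_3)$) with horizontal contraction coming from the dominated splitting (Propositions \ref{admissiblecurves} and \ref{contractionadmissible}).

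First, I work along the vertical fiber through $z=(\te,x)$. The composition $f^k_\te := f_{g^{k-1}(\te)} \circ \cdots \circ f_\te$ restricted to $T_k(z)$ is a diffeomorphism onto an interval containing $(f^k(x)-\secondelta, f^k(x)+\secondelta)$. By $(F_3)$, each factor has non-positive Schwarzian derivative, hence so does the composition. The Koebe Principle then provides a subinterval $I^* \subset T_k(z)$ with $f^k_\te(I^*) = (f^k(x)-\delta_1, f^k(x)+\delta_1)$ and distortion bounded by a constant $K_0=K_0(\secondelta/\delta_1)$. The same argument applied to residual compositions $f_{g^{k-1}(\te)} \circ \cdots \circ f_{g^j(\te)}$ yields, at each intermediate time $j$, comparable bounds (using $r_{k-j}(\fhi^j(z)) \geq r_k(z) \geq \secondelta$, which follows from $f^j(T_k(z)) \subseteq T_{k-j}(\fhi^j(z))$), and a vertical pre-image of length at most a uniform fraction of $|T_{k-j}(\fhi^j(z))| \leq 2\distv(\fhi^j(z),\crit)$.

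Second, I extend horizontally by iterated pull-back. Set $B_0 := B_{\delta_1}(\fhi^k(z))$, and for $j=k-1,k-2,\ldots,0$ let $B_{k-j}$ be the connected component of $\fhi^{-1}(B_{k-j-1})$ containing $\fhi^j(z)$. Inductively, $\fhi: B_{k-j} \to B_{k-j-1}$ is a diffeomorphism because: (a) by Proposition \ref{contractionadmissible}, the horizontal extent of $B_{k-j}$ contracts like $|\partial_\te g^{k-j}(g^j(\te))|^{-1}$; (b) by Proposition \ref{admissiblecurves}, the boundary of $B_{k-j}$ consists of nearly horizontal curves, so $B_{k-j}$ is a thin tube around a central vertical interval; (c) the vertical extent (controlled by Koebe, Step 1) is strictly less than $\distv(\fhi^j(z),\crit)/2$, provided $\delta_1$ is chosen small enough relative to $\secondelta$, so that the $(F_2)$-estimate applies and $\fhi$ is a local diffeomorphism on $B_{k-j}$.

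Finally, for bounded distortion of $\fhi^k$ on $V_k(z)$, the skew-product form gives $\det D\fhi^k(w) = \partial_\te g^k(w_\te)\cdot\prod_{j=0}^{k-1}\partial_x f(\fhi^j(w))$, where $w_\te$ denotes the $\te$-coordinate of $w$. The $g$-factor has bounded distortion because $g$ is smooth and uniformly expanding and $g^k$ sends the $\te$-projection of $V_k(z)$ onto a short interval. For the remaining product, for $w,w' \in V_k(z)$ and each $j$, I bound
\[
\bigl|\log|\partial_x f(\fhi^j(w))| - \log|\partial_x f(\fhi^j(w'))|\bigr| \leq \frac{B\,\dist(\fhi^j(w),\fhi^j(w'))}{\distv(\fhi^j(w),\crit)}
\]
using $(F_2)$ (valid thanks to Step 2). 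The numerator is dominated by the diameter of $B_{k-j}$, which decays geometrically in both horizontal and vertical directions, while the denominator is uniformly bounded below. Summing the resulting geometric series yields a uniform distortion bound. The main obstacle I expect is Step 2: ensuring that every backward image $B_{k-j}$ stays strictly within the $(F_2)$-safe region, which forces a careful balancing of $\delta_1$, $\secondelta$ and the Koebe constant, and relies on the propagation $r_{k-j}(\fhi^j(z)) \geq \secondelta$ together with the geometric contraction from partial hyperbolicity.
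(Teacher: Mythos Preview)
Your overall architecture matches the paper's: Koebe along vertical fibers for the one-dimensional distortion, and the dominated splitting to control the horizontal direction. However, two steps in your write-up contain genuine gaps.

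\textbf{Step 3 is incorrect as written.} You claim that the denominator $\distv(\fhi^j(w),\crit)$ is uniformly bounded below and that the diameter of $B_{k-j}$ decays geometrically in the vertical direction. Neither is true: the orbit of $z$ may approach the critical set arbitrarily closely, and the vertical extent $|I^*_j|$ of $B_{k-j}$ is only bounded by a \emph{fixed fraction} of $\distv(\fhi^j(z),\crit)$ (from the cross-ratio inequality), not by a geometrically decaying quantity. Consequently, the term-by-term $(F_2)$ bound along the vertical direction gives a contribution of order a constant at each step, hence a total of order $k$, not a uniform bound. The paper avoids this by splitting the distortion into two pieces: for two points on the same vertical fiber, Koebe applied to the \emph{full composition} $f^k_\te$ gives bounded distortion directly (this is Claim~\ref{verticaldistortion}); the $(F_2)$ term-by-term estimate is used only for two points on the same horizontal curve, where the ratio $\dist(\fhi^{k-j}(w),\fhi^{k-j}(w'))/\distv(\fhi^{k-j}(w),\crit)$ is genuinely bounded by $\constantdomi^{j}$ thanks to the domination (Claim~\ref{horizontaldistortion}). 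You must perform this vertical/horizontal decomposition.

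\textbf{Step 2 is incomplete.} To conclude that $B_{k-j}$ avoids $\crit$ (equivalently, that $(F_2)$ applies throughout $B_{k-j}$), it is not enough that the horizontal extent contracts like $|\partial_\te g^{k-j}|^{-1}$; you must compare this to $\distv(\fhi^j(z),\crit)$, which may itself be very small. The missing ingredient is precisely the domination inequality~(\ref{decomdomin1}): combining it with the mean value theorem on the vertical fiber (which gives $(\prod_{i=j}^{k-1}|\partial_x f|)^{-1}\le 2\distv(\fhi^j(z),\crit)/\secondelta$) yields that the horizontal extent is $\le C'\constantdomi^{k-j}\distv(\fhi^j(z),\crit)$, which is what you need. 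This is the content of the estimate~(\ref{admissiblevsvertical1}) in the paper.

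\textbf{A minor correction.} The inequality $|T_{k-j}(\fhi^j(z))|\le 2\distv(\fhi^j(z),\crit)$ in Step~1 is false: only \emph{one} component of $T_{k-j}\setminus\{f^j(x)\}$ need have length $\le\distv$. The bound $|I^*_j|\le c(\delta_1,\secondelta)\,\distv(\fhi^j(z),\crit)$ is nevertheless true, and follows from the cross-ratio inequality $b(T,J)\le b(f(T),f(J))$ for $Sf\le 0$, using $|f(L)|,|f(R)|\ge\secondelta-\delta_1$ and the observation that $|L||R|/|T|\le\min(|L|,|R|)\le\distv$.
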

\begin{proof}
Let $z=(\te,x)\in\toro\times I_0$ for some $\te\in\toro$ and $x\in
I_0$.
Let $\mathcal{T}_k(z)$ be the maximal interval such that 
$\fhi^j(\mathcal{T}_k(z))\cap\crit=\emptyset$ for all $j<k$ and let
$\mathcal{L}_k(z)$, $\mathcal{R}_k(z)$ be the components of
$\mathcal{T}_k(z)\setminus\{z\}$. By hypothesis
$|\fhi^{k}(\mathcal{L}_k(z))|\geq \secondelta$ and
$|\fhi^{k}(\mathcal{R}_k(z))|\geq\secondelta$. Let us consider
$\mathcal{I}_k(z)\subset \mathcal{T}_k(z)$ such that every component
of $\fhi^k(\mathcal{T}_k(z))\setminus \fhi^k(\mathcal{I}_k(z))$ has
length equal to $\secondelta/2$. In particular, we have that both
components of $\fhi^{k}(\mathcal{I}_k(z)\setminus \{z\})$ have length
greater or equal than $\secondelta/2$. 
By definition of $\fhi$, we know that the horizontal component of 
$\fhi^k(z)$ is $g^k(\te)$. Let us consider $\eta_1>0$ and $\eta_2>0$
such that $g^k:(\te-\eta_1,\te+\eta_2)\ra
(g^k(\te)-\rho\p,g^k(\te)+\rho\p)$ is a diffeomorphism. Here $\rho\p$
is a sufficiently small constant whose value will be made precise in
(\ref{constantrhop}). 

Let $I_k(z)$ be the projection of $\mathcal{I}_k(z)$ onto $I_0$. Let
us consider the set $B_k(z)=(\te-\eta_1,\te-\eta_2)\times I_k(z)$. For
every $w=(\te, x_w)\in \mathcal{I}_k(z)$, we denote by
$\mathcal{B}_{w}$ the line joining the points $(\te-\eta_1,x_w)$ and
$(\te+\eta_2,x_w)$. We denote by $\mathcal{B}^j_{w}$ (for $j\leq k$)
the curve given by the image of $\mathcal{B}_{w}$ under $\fhi^j$, i.e,
which satisfies $\fhi^{j}(\mathcal{B}_{w})=\mathcal{B}^j_{w}$. Observe
that $\mathcal{B}^0_{w}=\mathcal{B}_{w}$ for any $w\in I_k(z)$. 

In the same way we denote by $w^k$ the image under $\fhi^k$ of the
point $w=w^0$ and by $\mathcal{T}^j$ the set
$\fhi^j(\mathcal{T}_k(z))$ (since $z$ and $k$ are fixed along the
proof, there is no confusion in omitting in the notation the
dependence of $\mathcal{T}^j$ on $z$ and $k$). 

\begin{Cla} \label{difeofhi^k}
$\fhi^k: B_k(z)\ra \fhi^{k}(B_k(z))$ is a diffeomorphism.
\end{Cla}
\begin{proof}
We will use the bounded distortion of the map $g$. Namely, 
there exists $D>0$ such that, if we have $J\subset\toro$ and $n\in\N$
for which $g^n:J\ra g^n(J)$ is a diffeomorphism, then
\begin{equation} \label{distortionofg}
\frac{|\partial_\te g^n(\te)|}{|\partial_\te g^n(\omega)|}\leq D
\end{equation}
for all $\te,\omega\in J$. We claim that 
$\mathcal{B}^j_{w}\cap \crit=\emptyset$ for $j< k$ and for any $w\in
\mathcal{I}_k(z)$. 

Recall the constants $C$, $C_1$, $C_2$ and $D$, specified in 
(\ref{decomdomin1}), Proposition \ref{admissiblecurves}, Proposition
\ref{contractionadmissible} and (\ref{distortionofg}), respectively.
Let us assume that for every $w\in \mathcal{I}_k(z)$,
$|\mathcal{B}^{k}_{w}|\leq \rho$, where $\rho$ satisfies the
conditions
\begin{equation}\label{constantrho}
 C_2 \rho < (\secondelta/4)(DC)^{-1} \quad \text{ and } \quad
 \rho C_1 <\secondelta/4 .
\end{equation}
Let us fix $w\in \mathcal{I}_k(z)$. First, for all $j\leq k$, 
$\mathcal{B}^j_{w}$ are $C_1$-curves (see Definition \ref{tcurve} and
Proposition \ref{admissiblecurves}). On the other hand, there exists
$C_2$ such that $|\mathcal{B}^{k-j}_{w}|\leq C_2 |\partial_\te
g^j(\te_j)|^{-1} |\mathcal{B}^{k}_{w}|$ for some $(\te_j, x_j)\in
\mathcal{B}^{k-j}_{w}$, where $|\mathcal{B}|$ denotes the arc length
of the  curve $\mathcal{B}$ (see Proposition
\ref{contractionadmissible}).

For $1\leq j\leq k$, let us denote by $\mathcal{I}_{w,+}^{k-j}$ and
$\mathcal{I}_{w,-}^{k-j}$ the connected components of
$\mathcal{T}^{k-j}\setminus \{w^{k-j}\}$.  
By the mean value theorem, we have that 
$|\mathcal{I}_{w,+}^{k-j}|\geq (\prod_{i=0}^{j-1} |\partial_x
f(\fhi^i({\om}_j,y_j))|)^{-1}(\secondelta/2)$ for some $(\om_j,y_j)\in
\mathcal{I}_w^{k-j}$; and $|\mathcal{I}_{w,-}^{k-j}|\geq
(\prod_{i=0}^{j-1} |\partial_x
f(\fhi^i(\om_j\p,y_j\p))|)^{-1}(\secondelta/2)$, for some
$(\om_j\p,y_j\p)\in \mathcal{I}_{w,-}^{k-j}$. So, two cases can occur:
$(i)$ $|\mathcal{I}_{w,+}^{k-j}|\leq|\mathcal{I}_{w,-}^{k-j}|$, 
or $(ii)$ $|\mathcal{I}_{w,+}^{k-j}|>|\mathcal{I}_{w,-}^{k-j}|$.

Let us assume that we have the case $(i)$ 
(the other case is totally analogous). Then combining
(\ref{decomdomin1}) and (\ref{distortionofg}), we have
\begin{equation*}
|\partial_\te g^j(\te_j)|^{-1}<D C\,\constantdomi^j \,
\left(\prod_{i=0}^{j-1} \left|\partial_x
f(\fhi^i(\om_j,y_j))\right|\right)^{-1}\hspace{-0.25cm} .
\end{equation*}
From Proposition \ref{contractionadmissible}, the last inequality 
and (\ref{constantrho}), we have for $1\leq j\leq k$,
\begin{equation} \label{admissiblevsvertical1}
|\mathcal{B}^{k-j}_{w}|\leq C_2\, |\partial_\te g^j(\te_j)|^{-1}\, 
\rho < \constantdomi^j\,\left(\prod_{i=0}^{j-1} \left|\partial_x
f(\fhi^i(\om_j,y_j))\right|\right)^{-1}(\secondelta/4) \leq
\constantdomi^j \, \frac{\distv(w^{k-j},\crit)}{2} .
\end{equation}
for $w\in \mathcal{I}_k(z)$. This equation, and the condition 
$(F_2)$ satisfied by the skew-product, implies that
$\mathcal{B}^{k-j}_{w}\cap\crit=\emptyset$ (for every $1\leq j\leq
k$). Therefore the map  $\fhi^k:B_k\ra \fhi^k(B_k)$ is a local
diffeomorphism.

We claim that the map is injective. Indeed, if there exist 
$(\te_1, x_1)$ and $(\te_2, x_2)$ in $B_k$ such that
$\fhi^k(\te_1,x_1)=\fhi^k(\te_2,x_2)\in B$, since in the horizontal
direction there is expansion ($\partial_\te g> 1$), it must be
$\te_1=\te_2$. Next, by the differentiability of the functions
$f(\te,\cdot)$, if $x_1\neq x_2$, there must be at least one point
$(\te_1,x_w)$ between $(\te_1, x_1)$ and $(\te_1, x_2)$ and $j<k$ such
that this point is mapped by $\fhi^j$ in a critical point.  But this
would imply that $\mathcal{B}^j_{w}\cap\crit\neq \emptyset$ (for some
$w\in \mathcal{I}_k(z)$), which is a contradiction. Hence $x_1=x_2$,
which implies that the map $\fhi^k:B_k\ra \fhi^k(B_k)$ is injective.

Therefore, if $\rho$ is as in (\ref{constantrho}), Claim 
\ref{difeofhi^k} follows. It just remains to state precisely the value
of $\rho\p$. Given $\rho$, we choose $\rho\p<\rho$ maximal such that
\begin{align} \label{constantrhop}
\text{given } J\subset\toro \text{ interval with length } \rho\p 
\text{ and } X:J\ra I_0 \text{ a curve with } |X\p|\leq C_1, \\
 \text{ the arc length of } \graph(X) 
\text{ is less or equal than } \rho.\qquad\qquad \nonumber
\end{align}
where $C_1$ is the constant given in Proposition
\ref{admissiblecurves}. It finishes the proof of the claim.

\end{proof}
Let us prove now that the transformation of Claim \ref{difeofhi^k} has
bounded distortion.
\begin{Cla} \label{verticaldistortion}
There exists $K_1=K_1(\secondelta)>0$ such that for 
$z_1, z_2\in \mathcal{I}_k(z)\subset B_k(z)$,
\begin{equation*} \label{distortionvertical}
\frac{1}{K_1}\leq \frac{|\det D\fhi^k(z_1)|}{|\det D\fhi^k(z_2)|}\leq
K_1.
\end{equation*}
\end{Cla}
\begin{proof}
Let $z_1$ and $z_2$ be points in $\mathcal{I}_k(z)$, where
$z=(\te,x)$ for some $x\in I_0$ and $\te\in\toro$. We have that
$I_k(z)\subset T_k(z)$ (since $\mathcal{I}_k(z)\subset
\mathcal{T}_k(z)$ and these sets are the corresponding projections
onto $I_0$). Recall the notation
$f^k_\te=f_{g^{k-1}(\te)}\circ\ldots\circ f_{g(\te)}\circ f_{\te}$,
where $f_\te(x)=f(\te,x)$ for $\te\in \toro$ and $x\in I_0$. 
Since $\fhi^j(\mathcal{T}_k(z))\cap\crit=\emptyset$ for $j<k$, we have
that $f^k_\te:T_k(z)\ra$ $f^k_\te(T_k(z))$ is a $C^3$ diffeomorphism. 
By the way we have chosen $\mathcal{I}_k(z)$ we know that every
component of $f^k_\te(T_k(z))\setminus$ $f^k_\te(I_k(z))$ has length
equal to $\secondelta/2$. Then there exists $\kappa>0$ (depending only
on $\secondelta$), such that $f^k_\te(T_k(z))$ contains a
$\kappa$-scaled neighborhood of $f^k_\te(I_k(z))$ (i.e, both
components of $f^k_\te(T_k(z))\setminus f^k_\te(I_k(z))$ have length
$\geq \kappa |J|$). Thus, by Koebe Principle (see \cite[Theorem
IV.1.2]{dMvS}), there exists $K_1=K_1(\kappa)>0$ such that for $y_1,
y_2\in I_k(z)$,
\begin{equation*}
\frac{1}{K_1}\leq\frac{|Df^k_\te(y_1)|}{|Df^k_\te(y_2)|} \leq K_1 .
\end{equation*}

Now, for $z_1=(\te, y_1)\in \mathcal{I}_k(z)$, 
$|\det D\fhi^k(z_1)|=|\partial_\te g^k(\te)| |Df^k_\te(y_1)|$. It
finishes the proof. 
\end{proof}

\begin{Cla} \label{horizontaldistortion}
There exists $K_2=K_2(\secondelta)>0$ such that for 
$z_1\in \mathcal{I}_k(z)$ and $z_2$ in the same horizontal leaf
$\mathcal{B}_{z_1}$ of $z_1$,
\begin{equation*} 
\frac{1}{K_2}\leq \frac{|\det D\fhi^k(z_1)|}{|\det D\fhi^k(z_2)|}\leq
K_2.
\end{equation*}
\end{Cla}
\begin{proof}
Using the condition $(F_2)$ satisfied by the skew-product, together
with (\ref{distortionofg}), we conclude
\begin{equation*}
\left|\log\frac{|\det D\fhi^k(z_1)|}{|\det
D\fhi^k(z_2)|}\right| \leq \log D+ B\sum_{j=1}^{k}
\frac{\dist(\fhi^{k-j}(z_1),\fhi^{k-j}(z_2))}{\distv(\fhi^{k-j}(z_1),
\crit)^{}}
\end{equation*}
and by (\ref{admissiblevsvertical1}), we have 
\begin{equation*}
\left|\log\frac{|\det D\fhi^k(z_1)|}{|\det
D\fhi^k(z_2)|}\right|\leq \log D+ B\sum_{j=1}^{k} 
\constantdomi^j \leq B\p\sum_{j=1}^{\infty} \constantdomi^j=K_2\p .
\end{equation*} 
This concludes the proof of the claim.
\end{proof}

Combining Claim \ref{verticaldistortion} and Claim 
\ref{horizontaldistortion}, we get that $\fhi^k:B_k(z)\ra
\fhi^k(B_k(z))$ has bounded distortion. To finish the proof of
Proposition \ref{difeoboundist}, it remains to show that $\fhi^k(B_k)$
contains $B_{\delta_1}(\fhi^k(z))$ for some~$\delta_1>0$.

Recall that $z=(\te,x)$. The image of the horizontal curves of 
$B_k(z)$, i.e. $\mathcal{B}_{w}^k$, are $C_1$-curves for all $w\in
\mathcal{I}_k(z)$ (see Definition \ref{tcurve} and Proposition
\ref{admissiblecurves}). Using this fact and (\ref{constantrho}) we
conclude that $\fhi^k(B_k(z))$ contains the set 
\begin{equation*}
(g^k(\te)-\rho\p,g^k(\te)+\rho\p)\times 
(f_{\te}^k(x)-\secondelta/4,f_{\te}^k(x)+\secondelta/4) 
\end{equation*}
where $\rho\p$ was defined on (\ref{constantrhop}) and it 
does neither depend on the point $z$, nor on the iterate $k$. Hence
there exists $\delta_1>0$ such that $B_{\delta_1}(\fhi^k(z))\subset
\fhi^k(B_k(z))$. Considering $V_k(z)\subset B_k(z)$ such that
$\fhi^k(V_k(z))=B_{\delta_1}(\fhi^k(z))$, Proposition
\ref{difeoboundist} follows.
\end{proof}
\end{subsection}

\begin{subsection}{Neighborhoods associated to hyperbolic-like times}
For every $\secondelta>0$ and $i\in\N$, we denote by 
$H_i(\secondelta)$ the set of points $z\in M$ with $r_i(z)\geq
\secondelta$. The following lemma will be very useful in the
construction of the absolutely continuous invariant measure for
$\fhi$.

\begin{Le} \label{tauHi} 
Given $\secondelta>0$, there exists $\tau=\tau(\secondelta)>0$ 
such that for every $i\in\N$ and for any measurable set $Z$, there
exists a finite set of points $z_1,\ldots,z_N$ in $H_i(\secondelta)$
and neighborhoods $V\p_i(z_1),\ldots, V\p_i(z_N)$ which are two-by-two
disjoint. For every $k=1,\ldots,N$,  $\fhi^i:V\p_i(z_k)\to
B_{\delta_1/4}(\fhi^i(z_k))$ is a diffeomorphism with bounded
distortion and the union $W_i=V\p_i(z_1)\cup\ldots\cup V\p_i(z_N)$
satisfies
\begin{equation*}
\leb(W_i\cap H_i(\secondelta)\cap Z)\geq \tau\leb(H_i(\secondelta)\cap
Z) .
\end{equation*}
\end{Le}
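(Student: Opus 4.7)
The plan is to define $V'_i(z)$ as the preimage of a smaller ball of radius $\delta_1/4$, then extract a pairwise disjoint subfamily by a Besicovitch covering argument in the image, exploiting the fact that disjointness of the image balls transfers to disjointness of the $V'_i$-preimages.

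For each $z \in H_i(\secondelta)$, Proposition \ref{difeoboundist} supplies a neighborhood $V_i(z)$ on which $\fhi^i$ is a diffeomorphism with bounded distortion $K = K(\secondelta)$ onto $B_{\delta_1}(\fhi^i(z))$. I would define
\[ V'_i(z) := \bigl(\fhi^i|_{V_i(z)}\bigr)^{-1}\bigl(B_{\delta_1/4}(\fhi^i(z))\bigr) \subset V_i(z); \]
then $z \in V'_i(z)$ and the restriction $\fhi^i : V'_i(z) \to B_{\delta_1/4}(\fhi^i(z))$ inherits the bounded distortion $K$. The crucial observation is that disjointness of the image balls forces disjointness of the preimages: if $w \in V'_i(z_k) \cap V'_i(z_{k'})$, then $\fhi^i(w)$ would lie in $B_{\delta_1/4}(\fhi^i(z_k)) \cap B_{\delta_1/4}(\fhi^i(z_{k'}))$, so pairwise disjoint image balls automatically yield pairwise disjoint $V'_i(z_k)$'s.

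Next, I would apply the Besicovitch covering theorem in the $2$-dimensional manifold $M$ to the family $\{B_{\delta_1/4}(\fhi^i(z))\}_{z \in H_i(\secondelta) \cap Z}$, which covers $\fhi^i(H_i(\secondelta) \cap Z)$. This yields a finite subcover that decomposes into $N_0$ sub-families of pairwise disjoint balls, with $N_0$ depending only on the dimension. For each of the $N_0$ sub-families, choosing one preimage $z_k \in H_i(\secondelta) \cap Z$ per ball gives, by the previous observation, a pairwise disjoint family of $V'_i(z_k)$'s already satisfying the diffeomorphism and distortion requirements of the lemma.

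The hard part is the measure bound $\leb(W_i \cap H_i(\secondelta) \cap Z) \geq \tau \leb(H_i(\secondelta) \cap Z)$. Using bounded distortion on each $V_i(z_k)$, the domain measure of $V'_i(z_k) \cap H_i \cap Z$ is comparable, up to $K$ and the local Jacobian $|\det D\fhi^i(z_k)|^{-1}$, to the image measure of the corresponding subset of $B_{\delta_1/4}(\fhi^i(z_k))$. Summing over the Besicovitch sub-families and applying a pigeonhole argument should produce a uniform lower bound $\tau = \tau(\secondelta) > 0$. The main technical difficulty is that $\fhi^i$ is not globally injective, so a point of $H_i \cap Z$ whose image falls in a chosen ball may lie in a preimage branch different from $V'_i(z_k)$; accounting for these other branches and showing that the total captured mass still constitutes a definite fraction of $\leb(H_i \cap Z)$ is the heart of the argument.
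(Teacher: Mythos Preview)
Your construction of the disjoint family via Besicovitch in the image is correct as far as it goes, and the observation that disjoint image balls force disjoint preimage sets $V'_i$ is valid. However, the difficulty you flag at the end is not a technicality to be filled in but the point at which the image-space approach breaks down. Besicovitch controls the \emph{image} measure of $\fhi^i(H_i(\secondelta)\cap Z)$ captured by each of the $N_0$ subfamilies; converting this to the domain quantity $\leb(H_i(\secondelta)\cap Z)$ requires summing $|\det D\fhi^i|^{-1}$ over \emph{all} inverse branches meeting $H_i(\secondelta)\cap Z$, and there is no reason this weight is distributed proportionally among the subfamilies, nor that it sits on the single branch you select per ball. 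A pigeonhole over the $N_0$ subfamilies therefore does not deliver the required bound on $\leb(W_i\cap H_i(\secondelta)\cap Z)$.

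The argument of \cite{ABV} that the paper invokes avoids this by working entirely in the domain. The key fact, which holds verbatim for hyperbolic-like times via Proposition~\ref{difeoboundist}, is a nesting property: if $V'_i(z)\cap V'_i(w)\neq\emptyset$ then $V'_i(z)\subset V_i(w)$. Indeed, the intersection forces $B_{\delta_1/4}(\fhi^i(z))\subset B_{3\delta_1/4}(\fhi^i(w))$; since $V'_i(z)$ is connected and meets $V_i(w)$, and since $\fhi^i$ is a diffeomorphism on the larger rectangle $B_i(w)\supset \overline{V_i(w)}$ constructed in Claim~\ref{difeofhi^k} (so that $\fhi^i(\partial V_i(w))=\partial B_{\delta_1}(\fhi^i(w))$), the inclusion follows. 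One then takes a \emph{maximal} pairwise disjoint subfamily $\{V'_i(z_j)\}_j$ with $z_j\in H_i(\secondelta)\cap Z$: maximality plus nesting gives $H_i(\secondelta)\cap Z\subset\bigcup_j V_i(z_j)$ directly in the domain, with no branching ambiguity, and bounded distortion comparing $\leb(V_i(z_j))$ with $\leb(V'_i(z_j))$ yields the estimate.
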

\begin{Rem}
The constant $\delta_1$ and the distortion bound which appear in 
this lemma are the same given in Proposition \ref{difeoboundist},
which are independent on the point $z\in M$ and on the iterate
$i\in\N$.
\end{Rem}

\begin{Proof}
This is analogous to the proofs of Proposition 3.3 and Lemma 3.4
of \cite{ABV}, using hyperbolic-like times instead of hyperbolic
times.
\end{Proof}
\end{subsection}
\end{section}

\begin{section}{Absolutely continuous invariant measure}
\label{Absolutely}
Here we prove Theorem \ref{PrincipalA}. In order to do it, 
we need to control the measure of the points with many (positive
density) hyperbolic-like times.

\begin{subsection}{Points with infinitely many hyperbolic-like times}
We are going to show that, for some $\thirdelta>0$, the points with 
many $\thirdelta$-hyperbolic-like times are a positive Lebesgue
measure set. 
 
Recall that we denote $\toro\times I_0$ by $M$ and the 
Lebesgue measure of $M$ by $\leb$. Given any $\la>0$, let $Z(\la)$ be
the set of points in $M$ for which the limit in (\ref{nue}) is greater
than $2\la$. Also, for $n\in \N$, we define,
\begin{equation*}\label{Zn}
Z_n(\la)=\Biggl\{z\in Z(\la); \frac{1}{n}\sum_{j=0}^{n-1}
\log \|D\fhi(\fhi^j(z))^{-1}\|^{-1}>\la\Biggr\},
\end{equation*}
and for $\de>0$,
\begin{equation*} \label{An}
A^M_n(\de)= \Biggl\{ z\in M; \frac{1}{n}\soma r_i(z)<\de^2,
\:\:r_n(z)>0\Biggr\}
\end{equation*}
where $r_i(z)=r_i(\te,x)$ denotes the function 
$r_i\left(\{f_n\},x\right)$ defined on subsection \ref{statement2},
considering the sequence $f_n=f_{g^n(\te)}$ for $n\geq 0$. As we will
now see, these sets have relation with the sets defined by equations
(\ref{Yntheta}) and (\ref{Antheta}). 

We denote by $A_n(\te,\de)$ the set $A_n\left(\{f_n\},\de\right)$ 
(defined on (\ref{Antheta})), and by $Y_n(\te,\la)$ the set
$Y_n\left(\{f_n\},\la\right)$ (defined on (\ref{Yntheta})), with
$f_n=f_{g^n(\te)}$ for $n\geq 0$. Thus, we can conclude that
\begin{equation} \label{totaltoleaf}
Z_n(\la)\subset \cup_{\te\in T} (\te\times Y_n(\te,\la))
\qquad  \text{ and }\qquad
A^M_n(\de)=\cup_{\te\in T} (\te\times A_n(\te,\de)) .
\end{equation}

For every $\te\in\toro$, $\{f_{g^n(\te)}\}$ is a $C^1$-uniformly 
equicontinuous and $C^1$-uniformly bounded sequence of smooth maps.
It also holds that $p=\sup \#\crit_{g^n(\te)}<\infty$. Thus, we are in
the context of Lemma \ref{A_nY_n}. Moreover, for fixed $\la>0$, the
constant $\de$ given by Lemma \ref{A_nY_n} does not depend on $\te$,
i.e., the constant $\de$ is the same for any sequence
$\{f_{g^n(\te)}\}$. This happens because the modulus of continuity
(\ref{uniformlyequicontinuous}), the uniform bound $\Gamma$ in
(\ref{uniformlybounded}) and the uniform bound $p$ for the number of
critical points, are the same for any sequence $\{f_{g^n(\te)}\}$
(varying $\te\in\toro$). The last is true since $\fhi$ is $C^3$ and
$(F_1)$ holds.

\begin{Pro} \label{A_ntotal}
In the conditions of Theorem \ref{PrincipalA}, given $\la>0$, 
there exist $\thirdelta=\thirdelta(\la)>0$ such that
\begin{equation*} \label{goodset}
\leb\Biggl(\Biggl\{z\in Z(\la)\:;\:\soma r_i(z)\geq 2\thirdelta
n, \text{for all } n\geq n_0\Biggr\}\Biggr)\geq \leb\left( \cap_{n\geq
n_0} Z_n(\la)\right)/2 .
\end{equation*}
for $n_0$ big enough. Moreover, for Lebesgue almost every
$z\in Z(\la)$, $\liminf_{n\to\infty} \frac{1}{n} \soma r_i(z)\geq
2\thirdelta $.
\end{Pro}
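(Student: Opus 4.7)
The plan is to deduce Proposition \ref{A_ntotal} from Lemma \ref{A_nY_n} applied leaf-by-leaf, combined with Fubini's theorem and a Borel--Cantelli argument. First I would observe that for each $\te\in\toro$ the sequence $\{f_{g^n(\te)}\}_{n\geq 0}$ is $C^1$-uniformly equicontinuous, $C^1$-uniformly bounded, and has at most $p$ critical points per map, with the modulus of continuity, the bound $\Gamma$ and the integer $p$ all independent of $\te$ (this is the observation recorded just before the statement, using that $\fhi$ is $C^3$ on a compact manifold and satisfies $(F_1)$). Because Lemma \ref{A_nY_n} gives $\de$ depending only on these uniform constants and on $\la$, the conclusion
\[
|A_n(\te,\de)\cap Y_n(\te,\la)|\leq |I_0|\,\exp(-n\la/2)
\]
holds with one and the same $\de=\de(\la)>0$ for every $\te\in\toro$, once $n$ is sufficiently large. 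Applying Fubini to the decomposition (\ref{totaltoleaf}) yields $\leb(A^M_n(\de)\cap Z_n(\la))\leq |I_0|\exp(-n\la/2)$.

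I would then set $\thirdelta:=\de^2/2$, so that $2\thirdelta n=\de^2 n$. A point $z$ with $r_n(z)>0$ satisfies $\sum_{i=1}^n r_i(z)\geq 2\thirdelta n$ precisely when $z\notin A^M_n(\de)$, and the exceptional set where $r_n(z)=0$ for some $n$ (orbit hitting $\crit$) has Lebesgue measure zero, since $\crit$ is a $1$-dimensional subset of $M$ and $\fhi$ is a local $C^3$ diffeomorphism off $\crit$. Hence, modulo a negligible set,
\[
\Bigl\{z\in Z(\la);\,\textstyle\sum_{i=1}^n r_i(z)\geq 2\thirdelta n\text{ for all }n\geq n_0\Bigr\}\supset \bigcap_{n\geq n_0} Z_n(\la)\,\setminus\,\bigcup_{n\geq n_0}\bigl(A^M_n(\de)\cap Z_n(\la)\bigr).
\]
The Fubini bound summed over $n\geq n_0$ gives $\leb\bigl(\bigcup_{n\geq n_0}(A^M_n(\de)\cap Z_n(\la))\bigr)\leq |I_0|e^{-n_0\la/2}/(1-e^{-\la/2})$, which tends to $0$ as $n_0\to\infty$. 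Meanwhile the family $\bigcap_{n\geq n_0}Z_n(\la)$ is non-decreasing in $n_0$ and contains $Z(\la)$ in the limit, so its measure stabilizes to some $L\geq\leb(Z(\la))$; choosing $n_0$ large enough that the exponential tail is at most $L/4$ (the case $L=0$ being vacuous) delivers the first displayed inequality with factor $1/2$.

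For the second assertion I would apply Borel--Cantelli directly: since $\sum_n\leb(A^M_n(\de)\cap Z_n(\la))<\infty$, Lebesgue almost every $z$ belongs to $A^M_n(\de)\cap Z_n(\la)$ for only finitely many $n$. For $z\in Z(\la)$ one has $z\in Z_n(\la)$ for all sufficiently large $n$; combined with the above, $z\notin A^M_n(\de)$ for all large $n$, which forces $\frac{1}{n}\sum_{i=1}^n r_i(z)\geq\de^2=2\thirdelta$ eventually and hence $\liminf\geq 2\thirdelta$ for a.e.\ $z\in Z(\la)$.

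The hard part is really the uniformity of $\de$ across all vertical leaves: this is the reason Lemma \ref{A_nY_n} was formulated with such a careful statement of the dependence of $\de$. Everything afterwards (Fubini, Borel--Cantelli, disposing of the set where the orbit meets $\crit$, and comparing the measures of $\bigcap_{n\geq n_0} Z_n(\la)$ with the exponential tail) is routine measure theory.
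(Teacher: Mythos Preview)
Your proof is correct and follows essentially the same route as the paper's: apply Lemma \ref{A_nY_n} leaf-by-leaf with one uniform $\de$, then use Fubini and a tail/Borel--Cantelli argument to pass from the leafwise estimate to the statement on $M$. The paper phrases the last step as dominated convergence of the leafwise tails $m_{I_0}\bigl(\bigcup_{n\ge N}A_n(\te,\de)\cap Y_n(\te,\la)\bigr)\to 0$ rather than summing the explicit exponential bound (your version tacitly uses that the threshold ``$n$ big enough'' in Lemma \ref{A_nY_n} is also uniform in $\te$, which is true since it depends on the same constants as $\de$), but the content is identical.
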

\begin{proof} 
For $\la,\de>0$ and every $N\in \N$,
\begin{multline*}
\int_{\toro}\int_{I_0} \bigchi_{\{\cap_{n=N}^{\infty}
\complement A^M_n(\de)\cap Z_n(\la)\}}(\te,x)
dm_{I_0}(x)dm_{\toro}(\te)\geq 
\int_{\toro}\int_{I_0} \bigchi_{\{\cap_{n=N}^{\infty}
Z_n(\la)\}}(\te, x)dm_{I_0}(x)
dm_{\toro}(\te)\:-\: \\
-\int_{\toro}\int_{I_0} \bigchi_{\{\cup_{n=N}^{\infty}
A^M_n(\de)\cap Z_n(\la)\}}(\te, x)
dm_{I_0}(x)dm_{\toro}(\te) .
\end{multline*}
where $m_{I_0}$ and $m_{\toro}$ denote the Lebesgue measure 
on $I_0$ and $\toro$. On the other hand, by Lemma \ref{A_nY_n}, there
exists $\de>0$ such that for every $\te\in \toro$,
\begin{equation*}
m_{I_0}\left(\bigcup_{n=N}^{\infty}A_n(\te,\de)\cap
Y_n(\te,\la)\right)\ra 0,
\end{equation*}
when $N\ra\infty$. This together with (\ref{totaltoleaf}) yield, 
\begin{equation*}
\int_{\toro}\int_{I_0} \bigchi_{\{\cup_{n=N}^{\infty}
A^M_n(\de)\cap Z_n(\la)\}}(\te, x)
dm_{I_0}(x)dm_{\toro}(\te)  
\leq \int_{\toro}\int_{I_0} \bigchi_{\{\cup_{n=N}^{\infty}
A_n(\te,\de)\cap
Y_n(\te,\de)\}}(x)dm_{I_0}(x)dm_{\toro}(\te)\longrightarrow 0
\end{equation*}
when $N\rightarrow\infty$. 
Considering $\thirdelta$ such that $2\thirdelta<\de^2$, the
proposition follows.
\end{proof}
\end{subsection}

\begin{subsection}{Positive density of the hyperbolic-like times}
We prove that for every point 
$z$ such that $\soma r_i(z)\geq 2\thirdelta n$, (for some
$\thirdelta>0$),
the density of hyperbolic-like times is uniformly positive. 

Recall that for every $\thirdelta>0$ and $n\in\N$, we denote 
by $H_n(\thirdelta)$ the set of points $z\in M$ with $r_n(z)\geq
\thirdelta$.
\begin{Le} \label{densityofhyptimes}
Given $\thirdelta>0$, there exists $\zeta=\zeta(\thirdelta)>0$ such
that
\begin{equation*}
\frac{\#\:\{\,1\leq i\leq n ; \, z\in H_i(\thirdelta)\,\}}{n}\geq
\zeta
\end{equation*}
for any $z$ such that
$\soma r_i(z)\geq 2\thirdelta n$.
\end{Le}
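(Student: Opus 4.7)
The plan is to establish Lemma \ref{densityofhyptimes} by a direct counting (pigeonhole) argument, exploiting the uniform upper bound on $r_i$. First I note that, by construction, $r_i(z)$ is the length of a connected component of $f^i(\mathcal{T}_i(z))\setminus\{f^i(z)\}$, and hence is bounded above by the length of the ambient interval: $r_i(z)\leq |I_0|$ for all $i\in\N$ and all $z\in M$. This is the only structural information about $r_i$ that the proof needs.

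Next, fix $z$ with $\sum_{i=1}^n r_i(z)\geq 2\thirdelta n$, and let
$$k:=\#\{\,1\leq i\leq n\,:\, r_i(z)\geq \thirdelta\,\}=\#\{\,1\leq i\leq n\,:\,z\in H_i(\thirdelta)\,\}.$$
Splitting the sum according to whether $r_i(z)\geq \thirdelta$ or $r_i(z)<\thirdelta$, and using the trivial upper bound $r_i(z)\leq |I_0|$ on the first group and $r_i(z)<\thirdelta$ on the second,
$$2\thirdelta n\;\leq\;\sum_{i=1}^n r_i(z)\;\leq\;k\,|I_0|+(n-k)\,\thirdelta,$$
which rearranges to $k(|I_0|-\thirdelta)\geq \thirdelta n$. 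Assuming as we may that $\thirdelta<|I_0|$, this yields
$$\frac{k}{n}\;\geq\;\frac{\thirdelta}{|I_0|-\thirdelta}\;=:\;\zeta(\thirdelta),$$
which is the desired conclusion.

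There is no substantial obstacle here; the argument is essentially a discrete Markov-type inequality applied to the sequence $\{r_i(z)\}$, and $\zeta$ depends only on $\thirdelta$ and $|I_0|$, in particular is independent of $z$ and $n$. The reference to the Pliss Lemma in the introduction is relevant for the separate step (Proposition \ref{A_ntotal} and the surrounding discussion) that transfers the averaged information $\liminf\tfrac{1}{n}\sum r_i\geq 2\thirdelta$ on a set of positive measure into the \emph{positive density} of hyperbolic-like times for almost every point of that set; once that averaging bound is in place, however, Lemma \ref{densityofhyptimes} itself is immediate from the elementary pigeonhole argument above.
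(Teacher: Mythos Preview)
Your proof is correct and in fact more elementary than the paper's. The paper applies the Pliss Lemma with $c_1=\thirdelta$ and $c_2=2\thirdelta$ to produce $q\geq\zeta n$ indices $n_1<\cdots<n_q$ at which \emph{all} right-anchored partial averages of the $r_j$ are at least $\thirdelta$; specializing to the last term gives $r_{n_i}(z)\geq\thirdelta$, which is the only consequence actually used. Your argument bypasses this machinery: the uniform bound $r_i(z)\leq |I_0|$ combined with the Markov-type split already forces a proportion at least $\thirdelta/(|I_0|-\thirdelta)$ of indices $i$ with $r_i(z)\geq\thirdelta$. The Pliss route yields the stronger structural information that the good times can be chosen so that all intermediate partial sums are controlled, but since the lemma is only consumed downstream through the bare density bound (see the proof of Proposition~\ref{finaltheoremA}), that extra information is never exploited and your direct argument suffices.

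One small correction to your closing remark: in the paper the Pliss Lemma is invoked precisely \emph{here}, in the proof of this lemma, and not in Proposition~\ref{A_ntotal} (which relies instead on Lemma~\ref{A_nY_n}).
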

\begin{proof}
Considering $c_2=2\thirdelta$ and $c_1=\thirdelta$, applying 
the Pliss lemma (see \cite{Pliss}), there are $q\geq \zeta n$ and
$0<n_1<\ldots<n_q\leq n$ such that
\begin{equation*}
\sum_{j=k+1}^{n_i} r_j(z)\geq \thirdelta(n_i-k) \quad
\text{for every}\quad 0\leq k< n_i, \text{ and } i=1,\ldots,q.
\end{equation*}
Observe that $\zeta$ does not depend on $z$ neither on $n$. 
Hence, for any $z$ as in the statement of the lemma, there exist
$0<n_1<\ldots<n_q\leq n$ such that $r_{n_i}(z)\geq \thirdelta$ 
($1\leq i\leq q$) and $q/n\geq\zeta$.  
\end{proof}
\end{subsection}

\begin{subsection}{Construction of the measure}
\label{constructionofmeasure}
We consider the sequence
\begin{equation*}
\mu_n=\frac{1}{n}\soma \fhi^i_*\leb
\end{equation*}
of averages of forward iterates of Lebesgue measure on $M$. 
The main idea is  to decompose $\mu_n$ (for every $n$) as a sum of two
measures, $\nu_n$ and $\eta_n$, such that $\nu_n$ is uniformly
absolutely continuous and has total mass bounded away from zero. The
measure $\nu_n$ will be the part of $\mu_n$ carried on balls of radius
$\delta_1$ around points $\fhi^i(z)$, where $z$ is a point which has
$1\leq i\leq n$ as $\thirdelta$-hyperbolic-like time.

Let us fix $\la>0$ such that $\leb(Z(\la))>0$.  
Let us consider the corresponding $\thirdelta=\thirdelta(\la)>0$ from
Proposition \ref{A_ntotal}. Let $W_i$ be the set given by Lemma
\ref{tauHi} for $\secondelta=\thirdelta$. We consider the measures
\begin{equation*}
\nu_n=\frac{1}{n}\soma \fhi^i_*\leb_{W_i}
\end{equation*}
and $\eta_n=\mu_n-\nu_n$, where $\leb_{X}$ denotes the 
restriction of the Lebesgue measure to $X$. 
\begin{Pro} \label{finaltheoremA}
The measures $\nu_n$ are uniformly absolutely continuous and 
give positive (bounded away from zero) weight to $Z(\la)$, for all
large $n$.
\end{Pro}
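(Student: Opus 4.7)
The plan is to prove the two assertions separately: first bound the density $d\nu_n/d\leb$ by a constant independent of $n$ (uniform absolute continuity), and then combine Lemma~\ref{tauHi}, Lemma~\ref{densityofhyptimes} and Proposition~\ref{A_ntotal} to get $\nu_n(Z(\la))\geq c>0$ for all sufficiently large $n$.

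For the density bound, I will estimate each summand $\fhi^i_*\leb_{W_i}$ on its own. By Lemma~\ref{tauHi}, every $\fhi^i\colon V'_i(z_k)\to B_{\delta_1/4}(\fhi^i(z_k))$ is a diffeomorphism with distortion bounded by some constant $K$ independent of $i$ and $k$. The change-of-variables identity $\int_{V'_i(z_k)}|\det D\fhi^i|\,d\leb=\leb(B_{\delta_1/4})$ combined with bounded distortion forces $|\det D\fhi^i(x)|\geq K^{-1}\leb(B_{\delta_1/4})/\leb(V'_i(z_k))$ throughout $V'_i(z_k)$. Hence the density of $\fhi^i_*\leb_{V'_i(z_k)}$ with respect to Lebesgue is at most $K\leb(V'_i(z_k))/\leb(B_{\delta_1/4})$ on its support. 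Summing over $k$ at fixed $i$, the density of $\fhi^i_*\leb_{W_i}$ at a point $y$ is at most
\[
\frac{K}{\leb(B_{\delta_1/4})}\sum_{k:\,y\in B_{\delta_1/4}(\fhi^i(z_k))}\leb(V'_i(z_k))\;\leq\;\frac{K\,\leb(M)}{\leb(B_{\delta_1/4})}=:C,
\]
because the neighborhoods $V'_i(z_k)$ are pairwise disjoint. Averaging over $1\leq i\leq n$ yields $d\nu_n/d\leb\leq C$ uniformly in~$n$.

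For the lower bound, I first observe that $Z(\la)$ is forward invariant modulo a Lebesgue null set (the $\liminf$ in its definition is unchanged under the time shift away from orbits that hit the critical set), so $\leb(W_i\cap\fhi^{-i}(Z(\la)))\geq\leb(W_i\cap Z(\la))$ for every $i$. Applying Lemma~\ref{tauHi} with $Z=Z(\la)$, summing over $i$ and using Fubini,
\[
\nu_n(Z(\la))\;\geq\;\frac{\tau}{n}\int_{Z(\la)}\#\{\,1\leq i\leq n:\,z\in H_i(\thirdelta)\,\}\,d\leb(z).
\]
I choose $n_0$ large enough that $\leb(\bigcap_{m\geq n_0}Z_m(\la))\geq\leb(Z(\la))/2$; then Proposition~\ref{A_ntotal} ensures that the set $G=\{z\in Z(\la):\sum_{i=1}^m r_i(z)\geq 2\thirdelta m\text{ for all }m\geq n_0\}$ satisfies $\leb(G)\geq\leb(Z(\la))/4$. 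Lemma~\ref{densityofhyptimes} then gives $\#\{1\leq i\leq n:z\in H_i(\thirdelta)\}\geq\zeta n$ on $G$ for every $n\geq n_0$, whence $\nu_n(Z(\la))\geq\tau\zeta\leb(Z(\la))/4>0$ whenever $n\geq n_0$.

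The delicate step will be the density bound: individual push-forwards $\fhi^i_*\leb_{V'_i(z_k)}$ may have density as large as $K\leb(V'_i(z_k))/\leb(B_{\delta_1/4})$, and the image balls $B_{\delta_1/4}(\fhi^i(z_k))$ can overlap heavily, so a naive pointwise summation does not give a uniform bound. The crucial cancellation is that a large value of $\leb(V'_i(z_k))$ is incompatible with having many such neighborhoods meeting a common image point, because the $V'_i(z_k)$ themselves are pairwise disjoint by Lemma~\ref{tauHi}; this is exactly what lets us replace $\sum_k\leb(V'_i(z_k))$ by $\leb(M)$ at each $y$.
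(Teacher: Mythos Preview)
Your proof is correct and follows essentially the same approach as the paper: uniform absolute continuity from the bounded distortion in Proposition~\ref{difeoboundist}, and the lower bound via Lemma~\ref{tauHi}, Fubini, Lemma~\ref{densityofhyptimes}, and Proposition~\ref{A_ntotal}. Your treatment is actually more careful in two places: you spell out why the density of $\fhi^i_*\leb_{W_i}$ stays bounded despite the overlap of the image balls (the paper only says ``since $W_i$ is a disjoint union of sets $V_i$'s''), and you make explicit the forward invariance of $Z(\la)$ needed to pass from $\leb(W_i\cap\fhi^{-i}(Z(\la)))$ to $\leb(W_i\cap Z(\la))$, which the paper uses silently.
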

\begin{proof}
By Proposition \ref{difeoboundist}, the measures 
$\fhi^i_*\leb_{V_i(z)}$ are absolutely continuous and the densities
are uniformly bounded from above. It also holds for the measures
$\fhi^i_*\leb_{W_i}$, since $W_i$ is a disjoint union of sets $V_i's$.
Therefore, $\nu_n$ are absolutely continuous and the densities are
uniformly bounded from above. It just remains to prove the claim about
$Z(\la)$. By Lemma \ref{tauHi}, there exists $\tau=\tau(\thirdelta)$
such that
\begin{equation*} \label{totalmass} 
\nu_n(Z(\la))\geq \tau \frac{1}{n} \soma \leb(H_i(\thirdelta)\cap
Z(\la)).
\end{equation*}
So, it suffices to control the right side of the last expression. 
For this, let us consider the measure $\pi_n$ in $\{1,2, \ldots,n\}$
defined by $\pi_n(B)=\#(B)/n$, for every subset $B$. Using
Fubini\textquoteright s theorem, we have
\begin{equation*}
\frac{1}{n} \soma \leb(H_i(\thirdelta)\cap Z(\la))=
\int\int_{Z(\la)}\bigchi(z,i)d\leb(z)d\pi_n(i)=
\int_{Z(\la)}\int \bigchi(z,i)d\pi_n(i)d\leb(z)
\end{equation*}
where $\bigchi(z,i)=1$ if $z\in H_i(\thirdelta)$ and 
$\bigchi(z,i)=0$ otherwise. By Lemma \ref{densityofhyptimes}, it holds
$\int \bigchi(z,i)d\pi_n(i)\geq \zeta$ if $z$ is such that $\soma
r_i(z)\geq 2\thirdelta n$. Hence
\begin{equation*}
\frac{1}{n}\: \soma \leb(H_i(\thirdelta)\cap Z(\la))\:\geq \:\zeta
\leb\,\Biggl(\Biggl\{\,z\in Z(\la);\:\soma r_i(z)\geq 2\thirdelta
n\Biggr\} \Biggr).
\end{equation*}
In this way, we conclude using Proposition \ref{A_ntotal} that the
weight of $Z(\la)$ for the measure $\nu_n$ is bounded away from zero,
for $n$ big enough.
\end{proof}

The limit of any convergent subsequence of $\{\nu_n\}_n$ is an
absolutely continuous measure. It just remains to prove that we can
find our measure in such a way that it is invariant. Let us choose
$\{n_k\}_k$ such that $\mu_{n_k}$, $\nu_{n_k}$ and $\eta_{n_k}$
converge to $\mu$, $\nu$ and $\eta$, respectively. 
We can decompose $\eta=\eta^{ac}+\eta^{s}$ as the sum of an 
absolutely continuous measure $\eta^{ac}$ and a singular measure
$\eta^s$ (with respect to Lebesgue measure). Then, 
$\mu=(\nu+\eta^{ac})+\eta^{s}$ gives one decomposition of $\mu$ as sum
of one absolutely continuous and one singular measure. Since the push
forward under $\fhi$ preserves the class of absolutely continuous
measures and $\mu$ is invariant, 
$\mu=\fhi_*\mu=\fhi_*(\nu+\eta^{ac})+\fhi_*\eta^{s}$
gives another decomposition of $\mu$ as sum of one absolutely
continuous and one singular measure. By the uniqueness of the
decomposition we must have $\fhi_*(\nu+\eta^{ac})=\nu+\eta^{ac}$.
Hence, $\nu+\eta^{ac}$ is a non-zero absolutely continuous invariant
measure for $\fhi$.
\end{subsection}

\begin{subsection}{Ergodicity and finite number of measures} 
\label{ergodicityofmeasure}
To finish the proof of Theorem \ref{PrincipalA}, it remains to 
prove the ergodicity of the absolutely continuous invariant measure
and the finiteness claim in the statement of the theorem. Fixed
$\la>0$, we consider the constant $\thirdelta>0$ given on Proposition
\ref{A_ntotal}. Recall that for $\secondelta=\thirdelta$, we denote by
$V_k(z)$ (for $k\in\N, z\in M$) the neighborhood constructed on
Proposition \ref{difeoboundist}: it is mapped diffeomorphically onto
the ball of radius $\delta_1>0$ around $\fhi^k(z)$ by $\fhi^k$.
\begin{Le} \label{containinneigh}
Let $\la>0$ and $\thirdelta=\thirdelta(\la)$ be as in Proposition 
\ref{A_ntotal}. Let us consider $G_0\subset M$ an open set. Then for
any $z\in Z(\la)\cap G_0$, $V_k(z)\subset G_0$ whenever $z\in
H_k(\thirdelta)$ and $k$ is big enough.
\end{Le}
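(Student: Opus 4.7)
The plan is to show that $\diam V_k(z) \to 0$ as $k\to\infty$ along the hyperbolic-like times of $z$; this is enough, because $z$ lies in $V_k(z)$ by construction (recall from Proposition \ref{difeoboundist} that $V_k(z)\subset B_k(z)$ is an $\fhi^k$-preimage of $B_{\delta_1}(\fhi^k(z))$, and $\fhi^k(z)$ is the center of the target ball). Since $G_0$ is open and contains $z$, we may fix $r>0$ with $B_r(z)\subset G_0$; any $k$ with $\diam V_k(z)<r$ then satisfies $V_k(z)\subset B_r(z)\subset G_0$.

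To estimate $\diam V_k(z)$ I would exploit the skew-product structure and bound the horizontal and vertical extents of $V_k(z)$ separately, for $k$ a hyperbolic-like time so that Proposition \ref{difeoboundist} applies. Horizontally, the bounded distortion of $g^k$ from (\ref{distortionofg}) together with the uniform expansion $|\partial_\te g^k(\te)|\geq C^{-1}\constantdomi^{-k}$ implicit in the partial hyperbolicity condition (\ref{decomdomin1}) controls this extent by a constant multiple of $\delta_1\,\constantdomi^k$. Vertically, I use that $z\in Z(\la)$: for all sufficiently large $k$ one has $\|D\fhi^k(z)^{-1}\|^{-1}\geq e^{\la k}$. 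The partial hyperbolicity (\ref{decomdomin1}) forces this minimum expansion to coincide, up to a fixed multiplicative constant, with the vertical derivative $|Df^k_\te(x)|=\prod_{j=0}^{k-1}|\partial_x f(\fhi^j(z))|$, so $|Df^k_\te(x)|\geq c\,e^{\la k}$ for some $c>0$ and all $k$ large. Combined with the vertical bounded distortion given by Claim \ref{verticaldistortion}, this yields a vertical extent of $V_k(z)$ of order $\delta_1\,e^{-\la k}$.

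The main technical step is the vertical one: translating the asymptotic Lyapunov bound on $\|D\fhi^{-1}\|^{-1}$ into a genuine exponential lower bound on the vertical partial derivative along the orbit of $z$, with the off-diagonal term $\partial_\te f$ of $D\fhi$ absorbed via domination, and then transferring this pointwise bound from $z$ to all of $V_k(z)$ through the bounded distortion estimates of Claims \ref{verticaldistortion} and \ref{horizontaldistortion}. Once this is in hand, combining the two estimates gives $\diam V_k(z)\leq \mathrm{const}\cdot\delta_1\max(\constantdomi^k,e^{-\la k})$, which is smaller than $r$ for all large hyperbolic-like times $k$, completing the proof.
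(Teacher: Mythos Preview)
Your proposal is correct and follows essentially the same route as the paper: bound the horizontal extent of $V_k(z)$ via the uniform expansion of $g$, and the vertical extent via the exponential growth of $|Df_\te^k(x)|$ (coming from $z\in Z(\la)$) combined with the bounded distortion of Claim~\ref{verticaldistortion}. The paper streamlines the argument slightly by bounding the enclosing rectangle $B_k(z)=(\te-\eta_1,\te+\eta_2)\times I_k(z)\supset V_k(z)$ directly rather than $V_k(z)$ itself; also note that the inequality $|Df_\te^k(x)|\geq \|D\fhi^k(z)^{-1}\|^{-1}$ you need follows immediately from the invariance of the vertical direction under $D\fhi$, so invoking partial hyperbolicity and Claim~\ref{horizontaldistortion} for the vertical estimate is unnecessary.
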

\begin{proof}
In Proposition \ref{difeoboundist} we fixed the constant $\rho\p$
according to (\ref{constantrhop}) and we constructed the neighborhood
$V_k(z)$. This neighborhood is such that $V_k(z)\subset
B_k(z)=(\te-\eta_1,\te-\eta_2)\times I_k(z)$, where: $(i)$
$g^k:(\te-\eta_1,\te+\eta_2)\ra (g^k(\te)-\rho\p,g^k(\te)+\rho\p)$ is
a diffeomorphism; $(ii)$ $I_k(z)\subset T_k(z)$ and $f_\te^k$ is a
diffeomorphism restricted to $T_k(z)$. 
To conclude the proof, it is enough to show that $\eta_1$, $\eta_2$ 
and $|I_k(z)|$ goes to zero when $k$ goes to infinity. The claim about
$\eta_1$ and $\eta_2$ follows from the uniform expansion of $g$. Since
$z\in Z_k(\la)$ for $k$ big enough, the bounded distortion on
$f_\te^k:I_k(z)\to f_\te^k(I_k(z))$ (see the proof of Claim
\ref{verticaldistortion}) implies that $|I_k(z)|$ goes to zero.
\end{proof}
\begin{Le} \label{invariantsets}
For any positively invariant set $G\subset Z(\la)$ there exists some 
disk $\Delta$ with radius $\delta_1/4$ such that $\leb(\Delta\setminus
G)=0$. 
\end{Le}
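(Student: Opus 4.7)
The plan is a Lebesgue density point argument at a well-chosen point of $G$, pushed forward by $\fhi$ at a hyperbolic-like time to produce the desired disk. We assume $\leb(G)>0$, the case relevant for the ergodicity argument that uses this lemma. By Proposition \ref{A_ntotal}, Lebesgue-almost every $z\in G\subset Z(\la)$ satisfies $\liminf_n \tfrac1n\soma r_i(z)\geq 2\thirdelta$, so I can pick $z_0\in G$ that is both a Lebesgue density point of $G$ and satisfies this lower bound on the Birkhoff average of the $r_i$. Lemma \ref{densityofhyptimes} then produces an infinite sequence $k_1<k_2<\cdots$ of indices with $z_0\in H_{k_j}(\thirdelta)$, i.e.\ $\thirdelta$-hyperbolic-like times for $z_0$.

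For each such $k_j$, Proposition \ref{difeoboundist} supplies a neighborhood $V_{k_j}(z_0)$ on which $\fhi^{k_j}$ is a diffeomorphism onto $B_{\delta_1}(\fhi^{k_j}(z_0))$ with distortion bounded by a constant $K$ that is independent of $z_0$ and $j$; the reasoning in Lemma \ref{containinneigh} moreover shows $\diam V_{k_j}(z_0)\to 0$. Positive invariance gives $\fhi^{k_j}(V_{k_j}(z_0)\cap G)\subset G$, hence $B_{\delta_1}(\fhi^{k_j}(z_0))\setminus G\subset \fhi^{k_j}(V_{k_j}(z_0)\setminus G)$, and the bounded distortion then yields
\begin{equation*}
\frac{\leb\bigl(B_{\delta_1}(\fhi^{k_j}(z_0))\setminus G\bigr)}{\leb\bigl(B_{\delta_1}(\fhi^{k_j}(z_0))\bigr)}\;\leq\; K\,\frac{\leb\bigl(V_{k_j}(z_0)\setminus G\bigr)}{\leb(V_{k_j}(z_0))}.
\end{equation*}
Once the right-hand side is shown to tend to zero along (a subsequence of) the $k_j$, I extract a further subsequence so that $\fhi^{k_j}(z_0)\to z^*\in M$ by compactness, and set $\Delta=B_{\delta_1/4}(z^*)$. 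For $j$ large, $\Delta\subset B_{\delta_1}(\fhi^{k_j}(z_0))$, so $\leb(\Delta\setminus G)$ is dominated by the left-hand side above and must vanish, which is exactly the conclusion.

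The hard part is the density convergence of the right-hand side. Because partial hyperbolicity $|\partial_\te g|>|\partial_x f|$ allows the horizontal and vertical sides of $V_{k_j}(z_0)$ to shrink at different exponential rates, the family $\{V_{k_j}(z_0)\}$ need not have uniformly bounded eccentricity, so the Lebesgue density theorem does not apply verbatim to it. I expect to handle this by applying the Pliss Lemma simultaneously to the Birkhoff sums $\soma r_i(z_0)$ and $\sum_{j=0}^{n-1}\log|\partial_x f(\fhi^j(z_0))|$ (the latter available because $z_0\in Z(\la)$): along a further positive-density subsequence of hyperbolic-like times, both the horizontal and vertical expansion rates of $\fhi^{k_j}$ at $z_0$ are exponentially large with a ratio controlled by $\constantdomi$, so $V_{k_j}(z_0)$ contains a Euclidean ball whose radius is proportional to its diameter. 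Along such a subsequence the standard density-point property of $z_0$ forces $\leb(V_{k_j}(z_0)\setminus G)/\leb(V_{k_j}(z_0))\to 0$, closing the argument.
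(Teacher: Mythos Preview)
Your overall strategy matches the paper's (which simply cites \cite{ABV}, Lemma~5.6): pick a density point of $G$ with infinitely many hyperbolic-like times, push forward via bounded distortion, and extract a limit disk by compactness. You also correctly isolate the one subtle point, namely that the neighborhoods $V_k(z_0)$ may have unbounded eccentricity, so the ordinary Lebesgue density theorem for balls does not immediately give $\leb(V_k(z_0)\setminus G)/\leb(V_k(z_0))\to 0$.

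Your proposed repair, however, cannot work. The domination hypothesis~(\ref{decomdomin1}) forces
\[
\frac{|\partial_\te g^k(\te_0)|}{\prod_{i=0}^{k-1}|\partial_x f(\fhi^i(z_0))|}\;\geq\; C^{-1}\constantdomi^{-k}\;\longrightarrow\;\infty
\]
along \emph{every} sequence of iterates, so no choice of Pliss times can bound this ratio. Thus $B_k(z_0)=J_k\times I_k$ always satisfies $|J_k|/|I_k|\to 0$, and $V_k(z_0)$ never contains a Euclidean ball of radius proportional to its diameter; what your extra Pliss step actually buys is only $|I_k|\to 0$, which is already supplied by Lemma~\ref{containinneigh}. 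A correct way to close the argument exploits the product structure instead: $B_k(z_0)$ is an axis-parallel rectangle in $\toro\times I_0$ with both sides shrinking to zero, and $\leb(V_k(z_0))\geq c\,\leb(B_k(z_0))$ by the bounded Jacobian distortion of $\fhi^k$ on $B_k(z_0)$ (Claims~\ref{verticaldistortion} and~\ref{horizontaldistortion}) together with $\fhi^k(B_k(z_0))\subset M$. Since $\chi_G\in L^\infty\subset L\log L$ on the two-dimensional $M$, the Jessen--Marcinkiewicz--Zygmund strong differentiation theorem gives $\leb(B_k(z_0)\setminus G)/\leb(B_k(z_0))\to 0$ for Lebesgue-a.e.\ $z_0\in G$, and the desired density estimate for $V_k(z_0)$ follows.
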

\begin{proof}
The proof is analogous to the proof of Lemma 5.6 of \cite{ABV}. We
make use of $\thirdelta(\la)$-hyperbolic-like times instead of
$(\sigma,\delta)$-hyperbolic times. Thus, the only difference is the
reason why the neighborhoods $V_k(z)$ decrease with $k$. In our case,
this is given by Lemma \ref{containinneigh}. 
\end{proof}
\begin{proof}[End of proof of Theorem \ref{PrincipalA}]
At the end of subsection \ref{constructionofmeasure}, we construct an
absolutely continuous invariant measure $\nu_0:=\nu+\eta^{ac}$ with
$\nu_0(Z(\la))>0$. Since $Z(\la)$ is positively invariant,
we can suppose that $\nu_0(Z(\la))=1$. 
On the other hand, by Lemma \ref{invariantsets}, each invariant 
set on $Z(\la)$ with positive $\nu_0$-measure has full Lebesgue
measure in some disk with fixed radius. Since the manifold is compact,
there can be only finitely many disjoint invariant sets on $Z(\la)$
with positive $\nu_0$-measure. Hence $\nu_0$ can be decomposed as a
sum of ergodic measures. Namely, $\nu_0=\sum_{i=1}^{l}
\nu_0(D_i)\nu_{i}$, where $D_1, \ldots, D_l$ are disjoint invariant
sets with positive measure and $\nu_{i}$ is the normalized restriction
of $\nu_0$ to $D_i$.  The measures $\nu_i$ ($1\leq i\leq l$) are
ergodic absolutely continuous probabilities. Therefore, they are SRB
measures.

If $Z_1=Z(\la)\setminus \cup_{i=1}^{s} B_{i}$ (where $B_i$ denotes the
basin of the measure $\mu_i$) has positive Lebesgue measure, then we
can repeat the arguments in this section with $Z_1$ in the place of
$Z(\la)$. Thus we construct new absolutely continuous invariant
ergodic measures. Repeating this procedure, we find absolutely
continuous invariant ergodic measures such that almost every point in
$Z(\la)$ is in the basin of one of these measures. The number of
measures is finite since the basins are invariant sets and Lemma
\ref{invariantsets} holds. It finishes the proof of Theorem
\ref{PrincipalA}.
\end{proof}
\end{subsection}
\end{section}

\section*{}
\addcontentsline{toc}{section}{References}


\begin{thebibliography}{ABV}
\bibitem{Ad} 
K.\, Adl-Zarabi,
Absolutely continuous invariant measures for piecewise expanding 
$C^2$ transformations in $R^n$ on domains with cusps on the
boundaries, Ergod. Th. $\&$ Dynam. Sys. 16 (1996), 1--18.
\bibitem{Alv} 
J.F.\, Alves,
SRB measures for non-hyperbolic systems with multidimensional 
expansion, Ann. Sci. \'Ecole Norm. Sup. 33(4) (2000), 1--32.
\bibitem{AA} 
J.F.\, Alves, V.\, Ara\'ujo,
Random perturbations of non-uniformly expanding maps, 
Asterisque 286 (2003), 25--62.
\bibitem{ABV} 
J.F.\, Alves, C.\, Bonatti, M.\, Viana, 
SRB measures for partially hyperbolic systems whose central 
direction is mostly expanding, Invent. Math. 140 (2000), 351--398.
\bibitem{AV}
J.F.\, Alves, M.\, Viana, 
Statistical stability for robust classes of maps with non-uniform 
expansion, Ergod. Th. $\&$ Dynam. Sys. 22 (2002), 1--32. 
\bibitem{Bow} 
R.\, Bowen,
Equilibrium states and the ergodic theory of Anosov diffeomorphisms, 
Lect. Notes in Math. 470, Springer Verlag, (1975).
\bibitem{B} 
J.\, Buzzi,
Absolutely continuous invariant probability measures for arbitrary 
expanding piecewise R-analytic mappings of the plane, Ergod. Th. $\&$
Dynam. Sys. 20 (3) (2000), 697--708.
\bibitem{BST} 
J.\, Buzzi, O.\, Sester, T.\, Tsujii,
Weakly expanding skew-products of quadratic maps, 
Ergod. Th. $\&$ Dynam. Sys. 23 (2003), 1401--1414.
\bibitem{GB} 
P.\, G\'ora,  A.\, Boyarsky,
Absolutely continuous invariant measures for piecewise expanding 
$C^2$ transformations in $R^n$, Israel J. Math. 67 (1989), 272--286.
\bibitem{Keller2} 
G.\, Keller,
Ergodicit\'e et measures invariantes pour les transformations 
dilatantes par morceaux d\`une r\'egion born\'ee du plan, C.R. Acad.
Sci. Paris S\'er. A-B 289(12) (1979), A625--A627.
\bibitem{Keller} 
G.\, Keller,
Exponents, Attractors, and Hopf decompositions for interval maps, 
Ergod. Th. $\&$ Dynam. Sys. 10 (1990), 717--744.
\bibitem{Koz} 
O.\, Kozlovski,
Getting rid of the negative Schwarzian derivative condition, 
Ann. of Math. 152 (2000), 743--762.
\bibitem{Ma}
R.\, Ma\~n\'e,
\textit{Ergodic theory and differentiable dynamics}, Springer Verlag
(1987).
\bibitem{dMvS} 
W.\, de Melo, S.\, van Strien,
One dimensional dynamics, Springer Verlag, Berlin Heidelberg, 1993.
\bibitem{MSM} 
W.\, de Melo, S.\, van Strien, M.\, Martens,
Julia-Fatou-Sullivan Theory for Real One-Dimensional Dynamics, 
Acta Math. 168 (1992), 273--318.
\bibitem{Pin} 
V.\, Pinheiro, 
Sinai-Ruelle-Bowen measures for weakly expanding maps, 
Nonlinearity 19 (2006), 1185--1200.
\bibitem{Pliss}
V.\, Pliss,
On a conjecture due to Smale, Diff. Uravnenija 8 (1972), 262--268.
\bibitem{S} 
B.\, Saussol, 
Absolutely continuous invariant measures for multidimensional 
expanding $C^2$ maps, Israel J. Math. 116 (2000), 223--248.
\bibitem{Sc}
D.\, Schnellmann,
Non-continuous weakly expanding skew-products of quadratic maps 
with two positive Lyapunov exponents,
Ergod. Th. $\&$ Dynam. Sys. 28 (2008), 245--266.
\bibitem{SV} 
S.\, van Strien, E.\, Vargas,
Real bounds, ergodicity and negative Schwarzian for multimodal maps, 
Journal of the A.M.S. 17(4) (2004), 749--782.
\bibitem{Ts} 
M.\, Tsujii,
Physical measures for partially hyperbolic surface endomorphisms, 
Acta Math. 194 (2005), 37--132.
\bibitem{Via1} 
M.\, Viana,
Multidimensional nonhyperbolic attractors, Publ. Math. IHES 85 (1997),
63--96.
\bibitem{Via2} 
M.\, Viana,
Dynamics: A probabilistic and geometric perspective, Documenta
Mathematica, 
Extra Volume I, ICM (1998), 557--578.
\bibitem{Webb} 
B.\, Webb, Dynamics of functions with an eventual negative 
Schwarzian derivative, Discrete Contin. Dyn. Syst. 24(4) (2009),
1393--1408.
\bibitem{Yo} 
L.S.\, Young,
A closing lemma on the interval, Invent. Math. 54, (1979), 179--187.
\end{thebibliography}
\end{document}